\numberwithin{equation}{section}
\newtheorem{theorem}{Theorem}[section]
\newtheorem{corollary}[theorem]{Corollary}
\newtheorem{lemma}[theorem]{Lemma}
\theoremstyle{definition}
\newtheorem{remark}[theorem]{Remark}
\theoremstyle{definition}
\newtheorem{definition}[theorem]{Definition}
\theoremstyle{definition}
\def\dashint{\operatorname%
{\,\,\text{\bf-}\kern-.98em\DOTSI\intop\ilimits@\!\!}}
\def\\det{\text{det}}
\def\Xint#1{\mathchoice
 {\XXint\displaystyle\textstyle{#1}}%
 {\XXint\textstyle\scriptstyle{#1}}%
 {\XXint\scriptstyle\scriptscriptstyle{#1}}%
 {\XXint\scriptscriptstyle\scriptscriptstyle{#1}}%
 \!\int}
\def\XXint#1#2#3{{\setbox0=\hbox{$#1{#2#3}{\int}$}
  \vcenter{\hbox{$#2#3$}}\kern-.5\wd0}}
\def\dashint{\Xint-}
\def\.5{\frac{1}{2}}
\newcommand{\RN}[1]{%
  \textup{\uppercase\expandafter{\romannumeral#1}}%
}
\renewcommand{\epsilon}{\varepsilon}
\newcounter{marnote}
\begin{document}
\title[Asymptotic stability of homogeneous solutions]{Asymptotic stability of homogeneous solutions to Navier-Stokes equations under $L^{p}$-perturbations}

\author[Z. Zhao]{Zhiwen Zhao}

\address[Z. Zhao]{Beijing Computational Science Research Center, Beijing 100193, China.}
\email{zwzhao365@163.com}

\author[X. Zheng]{Xiaoxin Zheng}
\address[X. Zheng]{School of Mathematical Sciences, Beihang University, Beijing 100191, China}
\email{xiaoxinzheng@buaa.edu.cn}


\date{\today} 



\begin{abstract}
It is known that there has been classified for all $(-1)$-homogeneous axisymmetric no-swirl solutions of the three-dimensional Navier-Stokes equations with a possible singular ray. The main purpose of this paper is to show that the least singular solutions among such solutions other than Landau solutions to the Navier-Stokes equations are asymptotically stable under $L^{3}$-perturbations. Moreover, we establish the $L^{q}$ decay estimate with an explicit decay rate and a sharp constant for any $q>3$. For that purpose, we first study the global well-posedness of solutions to the perturbed equations under small initial data in $L_{\sigma}^{3}$ space and the local well-posedness with any initial data in $L_{\sigma}^{p}$ spaces for $p\geq3$.

\end{abstract}

\maketitle



\section{Introduction and main results}

The motion of a steady-state incompressible viscous fluid in $\mathbb{R}^{3}$ can be modeled by the stationary Navier-Stokes equations as follows:
\begin{align}\label{NS001}
\begin{cases}
-\Delta u+u\cdot\nabla u+\nabla p=0,\\
\mathrm{div}u=0,
\end{cases}
\end{align}
where $u$ is a vector field denoting velocity and $p$ represents the pressure. Since the equations are invariant after the scaling $u(x)\rightarrow \lambda u(\lambda x)$ and $p(x)\rightarrow \lambda^{2}p(\lambda x)$ with $\lambda>0$, it is natural to find solutions which keep invariant under this scaling. These solutions are called $(-1)$-homogeneous solutions on the basis of the homogeneity of $u$, although $p$ is $(-2)$-homogeneous.

By using spherical polar coordinates $(r,\theta,\phi)$, we represent a vector field $u$ in the following form
\begin{align*}
u=u_{r}e_{r}+u_{\theta}e_{\theta}+u_{\phi}e_{\phi},
\end{align*}
where
\begin{gather*}
e_{r}=\begin{pmatrix}
\sin\theta\cos\phi\\
\sin\theta\sin\phi\\
\cos\theta
\end{pmatrix},\quad
e_{\theta}=
\begin{pmatrix}
\cos\theta\cos\phi\\
\cos\theta\sin\phi\\
-\sin\theta
\end{pmatrix},\quad
e_{\phi}=
\begin{pmatrix}
-\sin\phi\\
\cos\phi\\
0
\end{pmatrix}.
\end{gather*}
Here $r$ denotes the radial distance from the origin, $\theta$ represents the polar angle between the positive $x_{3}$-axis and the radius vector, and $\phi$ is the azimuthal angle about $x_{3}$-axis. A vector field $u$ is called axisymmetric if $u_{r},u_{\theta}$ and $u_{\phi}$ depend only on $r$ and $\theta$, and is called $\textit{no-swirl}$ if $u_{\phi}=0$.

By introducing the new functions and variables: $U_{r}:=u_{r}r\sin\theta$, $U_{\theta}:=u_{\theta}r\sin\theta$, $U_{\phi}:=u_{\phi}r\sin\theta$ and $y:=\cos\theta$, the $(-1)$-homogeneous axisymmetric no-swir solutions of \eqref{NS001} can be reduced to the following ordinary differential equation (see \cite{S1972,LLY201801})
\begin{align}\label{NS002}
(1-y^{2})U_{\theta}'+2yU_{\theta}+\frac{1}{2}U^{2}_{\theta}=c_{1}(1-y)+c_{2}(1+y)+c_{3}(1-y^{2}),\;\,\mathrm{in}\;(-1,1),
\end{align}
where $c:=(c_{1},c_{2},c_{3})\in J:=\{c\in\mathbb{R}^{3}|\,c_{1}\geq-1,c_{2}\geq-1,c_{3}\geq\bar{c}_{3}(c_{1},c_{2})\}$ with $\bar{c}_{3}(c_{1},c_{2}):=-\frac{1}{2}(\sqrt{1+c_{1}}+\sqrt{1+c_{2}})(\sqrt{1+c_{1}}+\sqrt{1+c_{2}}+2)$. It has been shown in \cite{LLY201802} that there exist $\gamma^{\pm}\in C^{0}(J;\mathbb{R})$ such that for any $(c,\gamma)\in J\times[\gamma^{-}(c),\gamma^{+}(c)]$, ODE \eqref{NS002} possesses a unique solution $U_{\theta}^{c,\gamma}$ in $C^{\infty}(-1,1)\cap C^{0}[-1,1]$ with $U_{\theta}^{c,\gamma}(0)=\gamma$, where $\gamma^{+}(c)$ and $\gamma^{-}(c)$ satisfy that $\gamma^{+}(c)>\gamma^{-}(c)$ if $c_{3}>\bar{c}_{3}(c_{1},c_{2})$, and $\gamma^{+}(c)=\gamma^{-}(c)$ if $c_{3}=\bar{c}_{3}(c_{1},c_{2})$, the values of $U_{\theta}^{c,\gamma}(\pm1)$ are given by
\begin{align*}
U_{\theta}^{c,\gamma}(-1)=&
\begin{cases}
2+2\sqrt{1+c_{1}},&\mathrm{if}\;\gamma=\gamma^{+}(c),\\
2-2\sqrt{1+c_{1}},&\mathrm{otherwise},
\end{cases}\\
U_{\theta}^{c,\gamma}(1)=&
\begin{cases}
-2-2\sqrt{1+c_{2}},&\mathrm{if}\;\gamma=\gamma^{-}(c),\\
-2+2\sqrt{1+c_{2}},&\mathrm{otherwise}.
\end{cases}
\end{align*}
Write
\begin{align*}
u^{c,\gamma}=&u_{r}^{c,\gamma}e_{r}+u_{\theta}^{c,\gamma}e_{\theta}=\frac{(U_{\theta}^{c,\gamma})'}{r}e_{r}+\frac{U_{\theta}^{c,\gamma}}{r\sin\theta}e_{\theta},\\
p^{c,\gamma}=&u_{r}^{c,\gamma}-\frac{1}{2}(u_{\theta}^{c,\gamma})^{2}=\frac{1}{r}\left((U_{\theta}^{c,\gamma})'-\frac{(U_{\theta}^{c,\gamma})^{2}}{2r\sin^{2}\theta}\right).
\end{align*}
Then $\{(u^{c,\gamma},p^{c,\gamma})|\,(c,\gamma)\in J\times[\gamma^{-}(c),\gamma^{+}(c)]\}$ compose all $(-1)$-homogeneous axisymmetric no-swirl solutions for NSE \eqref{NS001} in $C^{\infty}(\mathbb{R}^{3}\setminus\{x'=0'\})$. Here and below, we represent the two-dimensional variables by adding superscript prime such as $x'=(x_{1},x_{2})$. According to the singularities, these solutions can be divided into the following three types:
\begin{itemize}
{\it
\item[] Type 1. Landau solutions satisfying $0<\limsup\limits_{|x|\rightarrow0}|x||u^{c,\gamma}|<\infty$;
\item[] Type 2. Solutions satisfying $0<\limsup\limits_{|x|=1,x'\rightarrow0'}(-\ln|x'|)^{-1}|u^{c,\gamma}|<\infty$;
\item[] Type 3. Solutions satisfying $0<\limsup\limits_{|x|=1,x'\rightarrow0'}|x'||u^{c,\gamma}|<\infty$.}
\end{itemize}
Remark that if $|c|=0$ and $\gamma\neq0$, $u^{c,\gamma}$ becomes Landau solution, see Theorem B in \cite{LLY201801}. The second and third types of solutions exhibit anisotropic singularities around all the $x_{3}$-axis. This is different from Landau solutions \cite{L1944}, whose singularity is isotropic and only concentrates in small neighbourhood near the origin. With regard to Landau solutions, Tian and Xin \cite{TX1998} showed that all $(-1)$-homogeneous, axisymmetric nonzero solutions of \eqref{NS001} in $C^{\infty}(\mathbb{R}^{3}\setminus\{0\})$ are Landau solutions. \v{S}ver\'{a}k \cite{S2011} proved that all $(-1)$-homogeneous nontrivial smooth solutions of \eqref{NS001} in $C^{\infty}(\mathbb{R}^{3}\setminus\{0\})$ are Landau solutions. In recent years, Li, Li and Yan \cite{LLY201801,LLY201802,LLY2019} systematically studied the $(-1)$-homogeneous axisymmetric solutions of \eqref{NS001} in $C^{\infty}(\mathbb{R}^{3}\setminus\{x'=0'\})$ with a possible singular ray $\{x'=0'\}$. To be specific, they gave a complete classification for such solutions with no-swirl in \cite{LLY201801,LLY201802} and showed the existence of such solutions with nonzero swirl in \cite{LLY201801,LLY2019}. For more earlier work on $(-1)$-homogeneous solutions, we refer to \cite{G1960,PP198501,PP198502,PP198503,S1972,S1951,W1991,Y1953} and the references therein.

In this paper, we focus on the second type of singular solutions, that is, the least singular solutions among these solutions other than Landau solutions. Denote $M:=\{(c,\gamma)|\,c_{1}=c_{2}=0,c_{3}>-4,\gamma^{+}(c)>\gamma>\gamma^{-}(c)\}$. Li and Yan \cite{LY2021} proved that there exists a small constant $\mu_{0}>0$ such that for any $(c,\gamma)\in M\cap\{|(c,\gamma)|<\mu_{0}\}$, $(u^{c,\gamma},p^{c,\gamma})$ is a weak solution of the stationary Navier-Stokes equations with singular force as follows:
\begin{align}\label{NS003}
\begin{cases}
-\Delta u^{c,\gamma}+u^{c,\gamma}\cdot\nabla u^{c,\gamma}+\nabla p^{c,\gamma}=f^{c,\gamma},\\
\mathrm{div}u^{c,\gamma}=0,
\end{cases}
\end{align}
where $f^{c,\gamma}=(4\pi c_{3}\ln|x_{3}|\partial_{x_{3}}\delta_{(0,0,x_{3})}-b^{c,\gamma}\delta_{0})e_{3}$, $e_{3}=(0,0,1)$, $\delta_{0}$ is the Dirac measure, and $b^{c,\gamma}$ is given by
\begin{align*}
b^{c,\gamma}=\int^{1}_{-1}\left(y[(U^{c,\gamma}_{\theta})']^{2}-\frac{2-y^{2}}{1-y^{2}}U^{c,\gamma}_{\theta}-\frac{y}{1-y^{2}}(U_{\theta}^{c,\gamma})^{2}\right)\rightarrow0,\quad\text{as }|(c,\gamma)|\rightarrow0.
\end{align*}
The weak solution of \eqref{NS003} are understood in the sense that for every test function $\varphi\in C^{\infty}_{c}(\mathbb{R}^{3})$,
\begin{align*}
&\int_{\mathbb{R}^{3}}(\nabla u_{j}^{c,\gamma}\nabla\varphi-u^{c,\gamma}_{j}u^{c,\gamma}\cdot\nabla\varphi-p^{c,\gamma}\partial_{x_{j}}\varphi)\notag\\
&=
\begin{cases}
0,&j=1,2,\\
4\pi c_{3}\int^{\infty}_{-\infty}\ln|x_{3}|\partial_{x_{3}}\varphi(0,0,x_{3})dx_{3}-b^{c,\gamma}\varphi(0),&j=3,
\end{cases}
\end{align*}
and
\begin{align*}
\int_{\mathbb{R}^{3}}u^{c,\gamma}\cdot\nabla\varphi=0.
\end{align*}
From Corollary 2.1 in \cite{LY2021}, we obtain that for $x\in\mathbb{R}^{3}\setminus\{x'=0'\}$,
\begin{align}\label{Singular}
|u^{c,\gamma}|=\frac{2|c_{3}|}{|x|}\ln\frac{|x|}{|x'|}+O(1)\frac{|(c,\gamma)|}{|x|},\quad|\nabla u^{c,\gamma}|=\frac{2|c_{3}|}{|x'||x|}+\frac{O(1)|(c,\gamma)|}{|x|^{2}}\ln\frac{|x|}{|x'|}.
\end{align}

Let $\dot{H}^{1}(\mathbb{R}^{3})$ be the closure of $C^{\infty}_{c}(\mathbb{R}^{3};\mathbb{R}^{3})$ under the norm $\|\nabla u\|_{L^{2}(\mathbb{R}^{3})}$. For any $1\leq p<\infty$, introduce the following spaces:
\begin{align*}
L_{\sigma}^{p}(\mathbb{R}^{3})=\{u\in L^{p}(\mathbb{R}^{3})|\,\nabla\cdot u=0\},\quad \dot{H}^{1}_{\sigma}(\mathbb{R}^{3})=\{u\in\dot{H}^{1}(\mathbb{R}^{3})|\,\nabla\cdot u=0\},
\end{align*}
with their norms as
\begin{align*}
\|u\|_{L^{p}_{\sigma}(\mathbb{R}^{3})}:=\|u\|_{L^{p}(\mathbb{R}^{3})},\quad\|u\|_{\dot{H}^{1}_{\sigma}(\mathbb{R}^{3})}=\|\nabla u\|_{L^{2}(\mathbb{R}^{3})}.
\end{align*}
For $w_{0}\in L^{p}_{\sigma}(\mathbb{R}^{3})$, consider the Cauchy problem for the incompressible Navier-Stokes equations in $\mathbb{R}^{3}\times(0,\infty)$ as follows:
\begin{align*}
\begin{cases}
\partial_{t}u-\Delta u+u\cdot\nabla u+\nabla p=f^{c,\gamma},\\
\mathrm{div}u=0,\\
u(x,0)=u^{c,\gamma}+w_{0},
\end{cases}
\end{align*}
where $u^{c,\gamma}$ and $f^{c,\gamma}$ are given by \eqref{NS003}. Denote $w(x,t)=u(x,t)-u^{c,\gamma}(x)$ and $\pi(x)=p(x)-p^{c,\gamma}(x)$. Then $(w(x,t),\pi(x))$ satisfies the following perturbed equations
\begin{align}\label{NS005}
\begin{cases}
\partial_{t}w-\Delta w+w\cdot\nabla w+w\cdot\nabla u^{c,\gamma}+u^{c,\gamma}\cdot\nabla w+\nabla \pi=0,\\
\mathrm{div}w=0,\\
w(x,0)=w_{0}.
\end{cases}
\end{align}
Li and Yan \cite{LY2021} obtained the asymptotic stability for the solution $w$ of problem \eqref{NS005} by using the same arguments in \cite{KP2011}, where Karch and Pilarczyk \cite{KP2011} showed that small Landau solutions keep asymptotically stable under $L^{2}$-perturbations. To be specific, by utilizing \eqref{Singular} and the anisotropic Caffarelli-Kohn-Nirenberg inequalities established in \cite{LY202102}, the following crucial fact
\begin{align*}
\left|\int_{\mathbb{R}^{3}}(w\cdot\nabla u^{c,\gamma})\cdot w\right|
\leq& K(c,\gamma)\|\nabla w\|^{2}_{L^{2}},\;\,\mathrm{with}\;K(c,\gamma)\rightarrow0,\;\,\mathrm{as}\;|(c,\gamma)|\rightarrow0
\end{align*}
holds, then the proofs for asymptotic stability of the Landau solutions in \cite{KP2011} can be directly applied to problem \eqref{NS005}. In 2017, Karch, Pilarczyk and Schonbek \cite{KPS2017} further generalized the work in \cite{KP2011} and developed a new method which allows to establish $L^{2}$-asymptotic stability for a large class of global-in-time solutions, especially covering the Landau solutions. Moreover, their results also generalize a series of works concerning the $L^{2}$-asymptotic stability either of the zero solution \cite{W1987,BM1992,KM1986,ORS1997,S1980,S1985} or nontrivial stationary solutions \cite{BM1995} to the Navier-Stokes equations.

Define the linear operator as follows:
\begin{align}\label{OPE001}
\mathcal{L}w=-\Delta w+\mathbb{P}((w\cdot\nabla)u^{c,\gamma})+\mathbb{P}((u^{c,\gamma}\cdot\nabla)w),
\end{align}
where $\mathbb{P}$ is the Leray projector onto the divergence-free vector fields. With regard to the properties of the operator $\mathcal{L}$, when $u^{c,\gamma}$ is replaced with Landau solution in \eqref{OPE001}, Karch and Pilarczyk \cite{KP2011} proved that $-\mathcal{L}$ is the infinitesimal generator of an analytic semigroup of bounded linear operators on $L^{2}_{\sigma}(\mathbb{R}^{3})$. Recently, Li, Zhang and Zhang \cite{LZZ2023} further extend the result on $L^{q}_{\sigma}(\mathbb{R}^{3})$ with $1<q<\infty$. Making use of \eqref{OPE001} and the Duhamel principle, the solution $w$ of equations \eqref{NS005} can be rewritten as the following integral form
\begin{align}\label{INT001}
w(x,t)=e^{-t\mathcal{L}}w_{0}-\int^{t}_{0}e^{-(t-s)\mathcal{L}}\mathbb{P}\mathrm{div}(w\otimes w)ds:=a+N(w,w).
\end{align}
Combining \eqref{Singular} and the anisotropic Caffarelli-Kohn-Nirenberg inequalities in \cite{LY202102}, it follows from the proof of Theorem 3.1 in \cite{LZZ2023} with a slight modification that $e^{-t\mathcal{L}}$ is also an analytic semigroup of bounded linear operators on $L^{q}_{\sigma}(\mathbb{R}^{3})$ for any $1<q<\infty$. From the work of Fabes-Jones-Rivi\`{e}re \cite{FJR1972}, we see that the solutions of the integral form in \eqref{INT001} are equivalent to the very weak solutions due to the minimal regularity assumption of only being in $L_{t}^{q}L_{x}^{p}$ with $p,q\geq2$. These solutions are called \textit{mild solutions}.

In this paper, we aim to make clear the asymptotic stability of the second type of singular solutions to Navier-Stokes equations under $L^{3}$-perturbations. For this purpose, we should first handle the well-posedness problem for solutions to the perturbed equations \eqref{NS005}. The method used in this paper is based on the perturbation theory, which has been presented in recent work \cite{LZZ2023} for the Landau solutions. The core idea lies in treating the nonlinear part $N(w,w)$ as a perturbation of the linear part $a$ in \eqref{INT001} and solving these two parts by successive approximation with strong convergence. Then combining the contraction mapping principle, we obtain the global well-posedness results for $L^{3}$ mild solutions with small initial data in $L_{\sigma}^{3}$ and the local well-posedness of $L^{p}$ mild solutions (see Definition \ref{DEF69}) under any initial data in $L_{\sigma}^{p}$ for $p\geq3$. These mild solutions can be actually regarded as a special family of strong solutions due to their uniqueness and better regularity.

As a continuation of \cite{LY2021,LZZ2023}, the results of this paper are not trivial generalizations. On one hand, by contrast with \cite{LZZ2023}, we will give a more clear understanding on application of the perturbation method in addressing the well-posedness problem for the solutions by providing more complete and strict proofs. On the other hand, we capture a precise constant in the asymptotic stability result of Theorem \ref{THM003} by introducing a special accuracy parameter $\tau$, which can be regarded as a threshold value for controlling the disturbance effect arising from singular solution $u^{c,\gamma}$. In fact, from the view of the structure of equations \eqref{NS005}, the difference between the perturbed equations \eqref{NS005} and Navier-Stokes equations lies in that there are two more disturbance terms $w\cdot\nabla u^{c,\gamma}$ and $u^{c,\gamma}\cdot\nabla w$ appearing in the first equation of \eqref{NS005}. The effect caused by these two disturbance terms can be weakened by decreasing the value of $|(c,\gamma)|$ in \eqref{Singular}. In addition, we will prove in Section \ref{SEC005} below that the unique $L^{p}$ mild solutions of \eqref{NS005} with the initial data in $L^{p}_{\sigma}\cap L_{\sigma}^{2}(\mathbb{R}^{3})$ are also the $L^{2}$ weak solutions (see Definition \ref{DEFINI06}). This fact not only indicates that this paper provides an alternative proof for the results in \cite{LY2021}, but also ensures the uniqueness either of the global $L^{2}$ weak solutions under small initial data or the local $L^{2}$ weak solutions with all initial data in $L_{\sigma}^{p}\cap L_{\sigma}^{2}(\mathbb{R}^{3})$ for $p\geq3$.

Aside from the perturbation theory, another classical method for studying the existence of solutions is the energy method. The energy method is based on the establishment of a priori estimate which is used to construct large and global-in-time weak solutions by taking weak limits. The existence of weak solutions to the Navier-Stokes equations has been known for quite long time since the great work \cite{L1934} of Leray, where in \cite{L1934} Leray constructed weak solutions $u\in L_{t}^{\infty}L^{2}\cap L_{t}^{2}H^{1}$ with divergence-free initial data $u_{0}\in L^{2}(\mathbb{R}^{3})$. These solutions are called Leray-Hopf weak solutions and satisfy the energy inequality. Subsequently, Hopf \cite{H1951} obtained a similar result for the equations in a smooth bounded domain with Dirichlet boundary condition. However, the problems of uniqueness and regularity of Leray-Hopf weak solutions in dimensions greater than two remain open and are regarded as one of the most important issues in fluid mechanics. Recently, Buckmaster and Vicol \cite{BV2019} utilized the method of convex integration to establish the nonuniqueness of weak solution for the three-dimensional Navier-Stokes equations with finite energy. It is worth pointing out that the weak solutions constructed in \cite{BV2019} are not known to be of Leray-Hopf. As for the nonuniqueness of weak solutions for the stationary Navier-Stokes, see \cite{L2019}. In addition, the convex integration scheme has already been developed to prove the nonuniqueness of solutions for other PDEs in fluid dynamics, see \cite{BLSV2018,BSV2019} and the references therein.

To state our results in a precise manner, we now give the precise definition of $L^{p}$ mild solution for the perturbed problem \eqref{NS005}.
\begin{definition}\label{DEF69}
Set $3\leq p<\infty$ and $T>0$. For any given initial data $w_{0}\in L^{p}(\mathbb{R}^{3})$, a function $w$ is called a $L^{p}$ mild solution of equations \eqref{NS005} on $[0,T]$, provided
\begin{align}\label{AQ001}
w\in C([0,T];L_{\sigma}^{p}(\mathbb{R}^{3}))\cap L^{\frac{4p}{3}}([0,T];L_{\sigma}^{2p}(\mathbb{R}^{3})),
\end{align}
and
\begin{align}\label{AQ002}
w(x,t)=e^{-t\mathcal{L}}w_{0}-\int^{t}_{0}e^{-(t-s)\mathcal{L}}\mathbb{P}\mathrm{div}(w\otimes w)ds.
\end{align}
Further, this solution is global provided \eqref{AQ001}--\eqref{AQ002} hold for any $0<T<\infty$.
\end{definition}

To begin with, we list the local well-posedness results for $L^{p}$ mild solutions to the perturbed problem \eqref{NS005} with any $p\geq3$ as follows.
\begin{theorem}\label{THM001}
For $p\geq3$ and $w_{0}\in L_{\sigma}^{p}(\mathbb{R}^{3})$, there exist two small positive constants $\delta=\delta(p)$ and $T=T(p,\|w_{0}\|_{L^{p}})$ such that for any $(c,\gamma)\in M\cap\{|(c,\gamma)|\leq\delta\}$, problem \eqref{NS005} possesses a unique $L^{p}$ mild solution $w$ on $[0,T]$ with $\nabla|w|^{\frac{p}{2}}\in L^{2}([0,T];L^{2}(\mathbb{R}^{3}))$. Moreover,
\begin{align}\label{AQ005}
\|w\|_{C_{T}L_{x}^{p}\cap L_{T}^{\frac{4p}{3}}L_{x}^{2p}}+\|\nabla|w|^{\frac{p}{2}}\|^{\frac{2}{p}}_{L^{2}_{T}L^{2}_{x}}\leq C(p,c,\gamma)\|w_{0}\|_{L^{p}(\mathbb{R}^{3})}.
\end{align}

\end{theorem}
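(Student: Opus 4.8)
The plan is to construct the $L^{p}$ mild solution by a fixed-point argument applied to the integral formulation \eqref{AQ002}, working in the mixed-norm space $X_{T}:=C([0,T];L^{p}_{\sigma})\cap L^{\frac{4p}{3}}([0,T];L^{2p}_{\sigma})$ dictated by \eqref{AQ001}. The two ingredients I would assemble first are: (i) the smoothing estimates for the semigroup $e^{-t\mathcal{L}}$ — since by the discussion in the excerpt $e^{-t\mathcal{L}}$ is an analytic semigroup of bounded operators on every $L^{q}_{\sigma}(\mathbb{R}^{3})$, $1<q<\infty$, one expects the $L^{q}$–$L^{r}$ decay bounds $\|e^{-t\mathcal{L}}f\|_{L^{r}}\lesssim t^{-\frac{3}{2}(\frac1q-\frac1r)}\|f\|_{L^{q}}$ together with the analogous gradient bound, possibly with an exponential-in-$t$ factor $e^{\omega t}$ that is harmless on a finite interval $[0,T]$; and (ii) the action of $\mathbb{P}\mathrm{div}$ as an operator of order one, so that $e^{-(t-s)\mathcal{L}}\mathbb{P}\mathrm{div}$ gains the extra factor $(t-s)^{-1/2}$ beyond the pure heat-type scaling. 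Granting these, the linear term $e^{-t\mathcal{L}}w_{0}$ is estimated directly in $X_{T}$, with its $L^{\frac{4p}{3}}_{T}L^{2p}_{x}$ norm finite because the relevant time exponent $\frac{4p}{3}\cdot\frac{3}{2}(\frac1p-\frac1{2p})=1$ sits exactly at the borderline — here one uses that $w_{0}\in L^{p}$ makes $t\mapsto\|e^{-t\mathcal{L}}w_{0}\|_{L^{2p}}$ lie in weak-$L^{\frac{4p}{3}}$ on $(0,\infty)$ and in $L^{\frac{4p}{3}}(0,T)$ with norm $\to0$ as $T\to0$ by absolute continuity.

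Next I would estimate the bilinear term $N(w,v)=\int_0^t e^{-(t-s)\mathcal{L}}\mathbb{P}\mathrm{div}(w\otimes v)\,ds$. Using $w\otimes v\in L^{p}_{x}$ when $w,v\in L^{2p}_{x}$, the kernel bound gives, for the $C_{T}L^{p}_{x}$ component, an integrand controlled by $(t-s)^{-\frac12-\frac{3}{2}(\frac1p-\frac1p)}=(t-s)^{-1/2}$ times $\|w(s)\|_{L^{2p}}\|v(s)\|_{L^{2p}}$, so Hölder in $s$ against $L^{\frac{2p}{3}}_{s}$ (the conjugate arrangement matching the $L^{\frac{4p}{3}}_{T}$ norms of $w$ and $v$) closes with a constant that tends to $0$ with $T$; the $L^{\frac{4p}{3}}_{T}L^{2p}_{x}$ component is handled the same way, now with kernel exponent $-\frac12-\frac{3}{2}\cdot\frac1{2p}$ and a Hardy–Littlewood–Sobolev / Young-in-time step, again yielding a factor small in $T$. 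This produces the standard contraction estimate $\|N(w,v)\|_{X_{T}}\le C(p,c,\gamma)\,\eta(T)\,\|w\|_{X_{T}}\|v\|_{X_{T}}$ with $\eta(T)\to0$, whence for $T=T(p,\|w_0\|_{L^p})$ small and $\delta=\delta(p)$ small the map $w\mapsto e^{-t\mathcal{L}}w_0+N(w,w)$ is a contraction on a small ball of $X_{T}$, giving existence and uniqueness of the mild solution together with the bound \eqref{AQ005} for the $X_{T}$-norm. Uniqueness in the full class \eqref{AQ001} (not just the small ball) follows by the usual Gronwall/absorption argument on a possibly shorter interval, then propagated.

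For the remaining assertion $\nabla|w|^{p/2}\in L^{2}_{T}L^{2}_{x}$ and the corresponding term in \eqref{AQ005}, I would first upgrade the regularity of the mild solution — by the analyticity of the semigroup and a bootstrap on the integral equation, $w(t)$ is smooth for $t>0$ and is a strong solution — and then test the equation \eqref{NS005} against $|w|^{p-2}w$. The viscous term yields $c_{p}\int|\nabla w|^{2}|w|^{p-2}\gtrsim \|\nabla|w|^{p/2}\|_{L^2}^2$; the term $w\cdot\nabla w$ integrates to zero by incompressibility; the pressure term vanishes likewise; and the two perturbation terms $\int(w\cdot\nabla u^{c,\gamma})\cdot|w|^{p-2}w$ and $\int(u^{c,\gamma}\cdot\nabla w)\cdot|w|^{p-2}w$ are absorbed using the pointwise bounds \eqref{Singular} on $u^{c,\gamma}$ and $\nabla u^{c,\gamma}$ together with the anisotropic Caffarelli–Kohn–Nirenberg inequalities of \cite{LY202102}, exactly as in the $K(c,\gamma)\|\nabla w\|_{L^2}^2$ estimate quoted in the introduction but now with the weight $|w|^{p-2}$; the resulting constant $K_{p}(c,\gamma)\to0$ as $|(c,\gamma)|\to0$, so for $|(c,\gamma)|\le\delta$ the perturbation contributions are a small fraction of the good viscous term and get absorbed. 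Integrating the resulting differential inequality in time over $[0,T]$ and combining with the already-established $X_{T}$ bound gives \eqref{AQ005} in full.

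The main obstacle I anticipate is the borderline nature of the time integrability: the pair $(\frac{4p}{3},2p)$ is scaling-critical relative to $L^{p}$, so both the linear estimate for $e^{-t\mathcal{L}}w_0$ in $L^{\frac{4p}{3}}_{T}L^{2p}_{x}$ and the bilinear estimate live exactly at the endpoint where the naive integral $\int_0^t(t-s)^{-1}\,ds$ diverges. Handling this cleanly requires either a Lorentz-space refinement (weak-$L^{\frac{4p}{3}}$ in time) with an extra smallness obtained from $T\to0$ via density of bounded functions, or the now-standard device of splitting the time integral and exploiting the fact that $w$ itself, being in $C_{T}L^{p}_x$ with small $L^{\frac{4p}{3}}_{T}L^{2p}_x$ norm, supplies the missing decay. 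A secondary technical point is justifying the energy identity for $\int|w|^{p-2}w$ against \eqref{NS005} for $p>2$ — this needs the a priori smoothing of the mild solution and a careful treatment of the term with $u^{c,\gamma}$, which is only locally in $L^{3}$ near the singular ray $\{x'=0'\}$, so the absorption of the perturbation terms must be done with the anisotropic weighted inequalities rather than a crude Hölder bound.
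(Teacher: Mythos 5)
Your overall architecture (fixed point for the integral equation in $C_TL^p_x\cap L_T^{4p/3}L^{2p}_x$, then an $|w|^{p-2}w$ energy estimate for the $\nabla|w|^{p/2}$ bound) is reasonable, but it diverges from the paper in a way that leaves two genuine gaps.

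First, your contraction argument is Kato's semigroup method and rests entirely on the $L^q$--$L^r$ smoothing estimates $\|e^{-t\mathcal{L}}f\|_{L^r}\lesssim t^{-\frac32(\frac1q-\frac1r)}\|f\|_{L^q}$ and the extra $(t-s)^{-1/2}$ gain for $e^{-(t-s)\mathcal{L}}\mathbb{P}\mathrm{div}$. These are not available from what the paper cites: analyticity of $e^{-t\mathcal{L}}$ on each $L^q_\sigma$ gives only $\|e^{-t\mathcal{L}}\|_{L^q\to L^q}\lesssim e^{\omega t}$ and $\|\mathcal{L}e^{-t\mathcal{L}}\|_{L^q\to L^q}\lesssim t^{-1}$, not cross-exponent decay. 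Proving the cross-exponent bounds for $\mathcal{L}=-\Delta+\mathbb{P}(\cdot\,\nabla u^{c,\gamma})+\mathbb{P}(u^{c,\gamma}\cdot\nabla\cdot)$ is itself a serious task here, because $u^{c,\gamma}$ is singular along the entire ray $\{x'=0'\}$ with $|\nabla u^{c,\gamma}|\sim(|x'||x|)^{-1}$, a critically scaling, anisotropic potential. The paper's whole design avoids this: it never uses smoothing estimates for $e^{-t\mathcal{L}}$, but instead runs a Picard iteration on the PDEs satisfied by the linear part $a$ and the bilinear part $z=N(w_1,w_2)$ separately (Lemmas 3.1 and 3.3), tests each iterate with $|\cdot|^{p-2}(\cdot)$, absorbs the $u^{c,\gamma}$-terms via the anisotropic Caffarelli--Kohn--Nirenberg inequalities and the smallness $K(c,\gamma)\to0$, and only then feeds the resulting bounds $\|a\|_{L_T^{4p/3}L^{2p}_x}\le C_1\|w_0\|_{L^p}$ and $\|N(w_1,w_2)\|_{L_T^{4p/3}L^{2p}_x}\le C_2T^{\frac{p-3}{2p}}\|w_1\|\,\|w_2\|$ into the abstract contraction lemma. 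This also dissolves your endpoint worry at $p=3$: smallness there comes simply from $\|a\|_{L^4_TL^6_x}\to0$ as $T\to0$ (the global norm being finite), with no Lorentz-space refinement needed.

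Second, in your energy step you assert that the pressure term vanishes when testing against $|w|^{p-2}w$. That is only true for $p=2$: for $p>2$ one has $\int\nabla\pi\cdot|w|^{p-2}w=-\int\pi\,\nabla(|w|^{p-2})\cdot w\neq0$, and controlling it is a substantial part of the paper's proof --- it requires writing $\pi=(-\Delta)^{-1}\partial_i\partial_j(\cdots)$ and invoking boundedness of the Riesz transforms on weighted $L^p$ spaces with the $A_p$-weights $(|x'||x|)^{(p-2)/2}$ and $|x|^{p-2}$, split over the cone $\Omega_{e^{-1}}$ and its complement, so that the weighted CKN inequality can absorb the result into $\|\nabla|w|^{p/2}\|_{L^2}^2$ with a factor $K(c,\gamma)$. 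Without this step the $\nabla|w|^{p/2}$ estimate, and hence \eqref{AQ005}, does not close.
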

\begin{remark}
As seen in Theorem \ref{THM001}, the existence and uniqueness of local $L^{p}$ mild solution $w$ have been established. With regard to the continuous dependence of the solution $w$ on the initial data $w_{0}$, using the same proof as in Theorem 1.3 of \cite{LZZ2023}, we obtain that there exists a small $\varepsilon>0$ such that for any $v_{0}\in L_{\sigma}^{p}(\mathbb{R}^{3})$, if $\|w_{0}-v_{0}\|_{L^{p}}<\varepsilon$, then there exists a unique $L^{p}$ mild solution $v$ on $[0,T]$ with the initial data $v_{0}$. Furthermore,
\begin{align*}
\|w-v\|_{C_{T}L_{x}^{p}\cap L_{T}^{\frac{4p}{3}}L_{x}^{2p}}+\big\|\nabla|w-v|^{\frac{p}{2}}\big\|^{\frac{2}{p}}_{L_{T}^{2}L_{x}^{2}}\rightarrow0,\quad\text{as $\|w_{0}-v_{0}\|_{L^{p}}\rightarrow0.$}
\end{align*}

\end{remark}
\begin{remark}
With regard to the method of space-time estimates applied to the well-posedness problem for more general dissipative equations including the Navier-Stokes equations, see \cite{MYZ2008,MZ2004,M2001}.

\end{remark}

Second, the global well-posedness results under small initial data are stated as follows.
\begin{theorem}\label{THM002}
For $p\in[3,\frac{9}{2}]$, there exist two small positive constants $\varepsilon_{0}$ and $\delta_{0}$ such that for any $(c,\gamma)\in M\cap\{|(c,\gamma)|\leq\delta_{0}\}$ and $w_{0}\in L^{p}_{\sigma}(\mathbb{R}^{3})\cap L^{3}_{\sigma}(\mathbb{R}^{3})\cap \{\|w_{0}\|_{L^{3}(\mathbb{R}^{3})}<\varepsilon_{0}\}$, problem \eqref{NS005} has a unique global $L^{p}$ mild solution $w$ with $\nabla|w|^{\frac{p}{2}}\in L^{2}([0,\infty);L^{2}(\mathbb{R}^{3}))$. Furthermore, \eqref{AQ005} holds with the integrating range  $[0,T]$ replaced by $[0,\infty)$.

\end{theorem}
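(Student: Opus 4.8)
The plan is to combine the local theory of Theorem~\ref{THM001} with an a priori estimate that controls the $L^3$ norm globally in time, and then bootstrap to the $L^p$ bound for $p\in[3,\tfrac92]$. First I would apply Theorem~\ref{THM001} with $p=3$ to obtain a local $L^3$ mild solution $w$ on some interval $[0,T_0]$; the key is to show that, under the smallness $\|w_0\|_{L^3}<\varepsilon_0$ and $|(c,\gamma)|\le\delta_0$, this solution does not blow up. The natural quantity to track is $\|w(t)\|_{L^3}$ together with the dissipation $\|\nabla|w|^{3/2}\|_{L^2}$. Testing the perturbed equations \eqref{NS005} (formally, then justified by the regularity in Definition~\ref{DEF69}) against $|w|w$ and using $\mathrm{div}\,w=0$ to kill $\int(w\cdot\nabla w)\cdot|w|w$, one gets an energy identity of the schematic form
\begin{align*}
\frac1{3}\frac{d}{dt}\|w\|_{L^3}^3+\frac{8}{9}\big\|\nabla|w|^{3/2}\big\|_{L^2}^2
=-\int_{\mathbb{R}^3}(w\cdot\nabla u^{c,\gamma})\cdot|w|w-\int_{\mathbb{R}^3}(u^{c,\gamma}\cdot\nabla w)\cdot|w|w.
\end{align*}
The two bad terms are handled exactly as in the construction of the analytic semigroup $e^{-t\mathcal{L}}$ on $L^q_\sigma$: using the pointwise bounds \eqref{Singular} and the anisotropic Caffarelli--Kohn--Nirenberg inequalities of \cite{LY202102}, each is bounded by $K(c,\gamma)\|\nabla|w|^{3/2}\|_{L^2}^2$ with $K(c,\gamma)\to0$ as $|(c,\gamma)|\to0$ (the second term also uses $\int(u^{c,\gamma}\cdot\nabla w)\cdot|w|w=\tfrac13\int u^{c,\gamma}\cdot\nabla|w|^3$ and integration by parts against $\mathrm{div}\,u^{c,\gamma}=0$, so it reduces to a CKN-type estimate on $|w|^{3/2}$). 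Choosing $\delta_0$ small enough that $K(c,\gamma)\le\tfrac14$, the right-hand side is absorbed, giving $\frac{d}{dt}\|w\|_{L^3}^3+c_0\|\nabla|w|^{3/2}\|_{L^2}^2\le0$; in particular $\|w(t)\|_{L^3}\le\|w_0\|_{L^3}<\varepsilon_0$ for all $t$ in the interval of existence, and $\nabla|w|^{3/2}\in L^2_tL^2_x$ globally.

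Next I would upgrade local to global. Since $\|w(t)\|_{L^3}$ stays below $\varepsilon_0$, the local existence time $T=T(3,\|w_0\|_{L^3})$ from Theorem~\ref{THM001} can be taken uniform; a standard continuation argument then extends $w$ to $[0,\infty)$ as an $L^3$ mild solution, with the estimate \eqref{AQ005} (for $p=3$) on every $[0,T]$ and hence on $[0,\infty)$ with a time-independent constant. For the $L^p$ statement with $3<p\le\tfrac92$, I would either (a) redo the energy identity with test function $|w|^{p-2}w$, obtaining
\begin{align*}
\frac1{p}\frac{d}{dt}\|w\|_{L^p}^p+\frac{c(p)}{p}\big\|\nabla|w|^{p/2}\big\|_{L^2}^2
\le K_p(c,\gamma)\big\|\nabla|w|^{p/2}\big\|_{L^2}^2,
\end{align*}
with $K_p(c,\gamma)\to0$, and absorb as before to get $\|w(t)\|_{L^p}\le\|w_0\|_{L^p}$ and $\nabla|w|^{p/2}\in L^2_tL^2_x$; or (b) run the fixed-point/successive-approximation scheme of Theorem~\ref{THM001} in the space $C_TL^p_x\cap L_T^{4p/3}L_x^{2p}$ but now on $[0,\infty)$, using the already-established global $L^3$ control of $w$ to dominate the nonlinear term $N(w,w)$ uniformly in $T$. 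The restriction $p\le\tfrac92$ is exactly the threshold at which the mixed-norm exponents $(\tfrac{4p}{3},2p)$ in Definition~\ref{DEF69} remain compatible with the $L^3$-smallness (the relevant interpolation/Sobolev inequality in $\mathbb R^3$ degenerates past $p=\tfrac92$, so $K_p(c,\gamma)$ can no longer be made small uniformly), and I would isolate this point carefully. Uniqueness in the class \eqref{AQ001} follows from the $p=3$ uniqueness of Theorem~\ref{THM001} on each compact interval together with the persistence of regularity.

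The main obstacle, I expect, is making the absorption of the disturbance terms $w\cdot\nabla u^{c,\gamma}$ and $u^{c,\gamma}\cdot\nabla w$ rigorous at the level of the $L^p$-energy identity for the mild solution rather than for a smooth solution: one must justify testing \eqref{NS005} against $|w|^{p-2}w$ using only the regularity $w\in C_TL^p_x\cap L_T^{4p/3}L_x^{2p}$ with $\nabla|w|^{p/2}\in L^2_tL^2_x$, which requires an approximation argument (mollification in space, or a Galerkin-type truncation adapted to the operator $\mathcal L$) and a careful check that the anisotropic CKN estimates apply to the approximants with constants independent of the regularization. A secondary technical point is tracking the constant $C(p,c,\gamma)$ in \eqref{AQ005} so that it does not deteriorate as $T\to\infty$; this is where the strict decrease $\frac{d}{dt}\|w\|_{L^p}^p\le0$ (after absorption) is essential, since it yields a bound depending only on $\|w_0\|_{L^p}$ and not on $T$.
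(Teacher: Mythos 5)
There is a genuine gap in your $L^3$ a priori estimate: the pressure term is missing from the energy identity. Testing \eqref{NS005} against $|w|w$ (or $|w|^{p-2}w$ for any $p\neq 2$), the convection term does vanish by $\mathrm{div}\,w=0$, but $\int_{\mathbb{R}^{3}}\nabla\pi\cdot|w|^{p-2}w\,dx=-\int_{\mathbb{R}^{3}}\pi\,\mathrm{div}(|w|^{p-2}w)\,dx$ does not. Since $\pi=(-\Delta)^{-1}\partial_i\partial_j(w_iw_j+u^{c,\gamma}_iw_j+w_iu^{c,\gamma}_j)$, the part of the pressure coming from $w\otimes w$ contributes a term which (after the Riesz-transform and Sobolev estimates the paper carries out for the analogous quantity $\mathcal{A}_4^2$ in Section \ref{SEC004}) is of size $Cr(r-2)\|w(t)\|_{L^{3}}\|\nabla|w|^{r/2}\|_{L^{2}}^{2}$. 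This cannot be absorbed by shrinking $|(c,\gamma)|$; it can only be absorbed using the smallness of $\|w(t)\|_{L^{3}}$ itself, which forces a continuity-in-time bootstrap rather than the unconditional monotonicity you claim. As written, your inequality $\frac{d}{dt}\|w\|_{L^{3}}^{3}+c_{0}\|\nabla|w|^{3/2}\|_{L^{2}}^{2}\leq 0$ would hold for arbitrary data, i.e.\ the critical $L^{3}$ norm would be monotone decreasing for large-data Navier--Stokes --- which is false and would trivialize the global regularity problem. The same omission infects your option (a) for $3<p\leq\frac92$.

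A secondary problem is the continuation step: at the critical exponent $p=3$ the local existence time of Theorem \ref{THM001} is not a function of $\|w_{0}\|_{L^{3}}$ alone (it depends on the profile of $w_0$ through the rate at which $\|e^{-t\mathcal{L}}w_{0}\|_{L^{4}_{T}L^{6}_{x}}\to0$), so "the local existence time can be taken uniform" does not follow from the persistence of $\|w(t)\|_{L^{3}}<\varepsilon_{0}$, and invoking small-data global existence at each restart would be circular. The paper sidesteps both issues entirely: since the bilinear estimate of Lemma \ref{Lem02} at $p=3$ carries the factor $T^{(p-3)/(2p)}=1$, the contraction mapping (Lemma \ref{PER001}) runs in $E=L_{T}^{4}L_{x}^{6}$ for \emph{every} $T>0$ simultaneously once $\|w_{0}\|_{L^{3}}<\varepsilon_{0}\leq\frac{1}{4C_{1}C_{2}}$, giving the global solution in one stroke; the range $3<p\leq\frac92$ is then obtained by interpolating $\|w\|_{L_{t}^{4p/(2p-3)}L_{x}^{2p}}\lesssim\|w\|_{L_{t}^{\infty}L_{x}^{3}}^{(9-2p)/(4p)}\|\nabla|w|^{3/2}\|_{L_{t}^{2}L_{x}^{2}}^{(2p-3)/(2p)}$ (this nonnegativity of $(9-2p)/(4p)$ is the precise source of the threshold $p=\frac92$) and absorbing $C\varepsilon_{0}\|w\|_{L_{t}^{4p/3}L_{x}^{2p}}$. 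Your option (b) is closest in spirit to this, but you would still need the global $L^{3}$ control to come from the fixed point itself rather than from the flawed energy identity.
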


\begin{remark}
Although it remains to be open whether the the global existence result for $L^{3}$ mild solution under small initial data will imply its global existence for all initial data, we can find a global $L^{2}+L^{3}$ weak solution for any initial data in $L_{\sigma}^{3}(\mathbb{R}^{3})$ by using an idea in \cite{LZZ2023}, which is actually inspired by previous work \cite{C1990,KPS2017,SS2017}. Specifically speaking, we first decompose the initial data $w_{0}\in L_{\sigma}^{3}(\mathbb{R}^{3})$ into two parts as follows: $w_{0}=w_{01}+w_{02}$, where $w_{01}\in L^{3}_{\sigma}(\mathbb{R}^{3})$ satisfies $\|w_{01}\|_{L^{3}}<\varepsilon_{0}$ and $w_{02}\in L_{\sigma}^{2}\cap L_{\sigma}^{3}(\mathbb{R}^{3})$. By using the perturbation theory, we obtain a unique global $L^{3}$ mild solution $w_{1}$ for problem \eqref{NS005} with the small initial data $w_{01}$. Then we proceed to make use of the energy method to construct a weak solution $w_{2}\in C_{w}([0,\infty);L^{2}_{\sigma}(\mathbb{R}^{3}))\cap L^{2}([0,\infty);\dot{H}_{\sigma}^{1}(\mathbb{R}^{3}))$ satisfying the following equations
\begin{align*}
\begin{cases}
\partial_{t}w_{2}-\Delta w_{2}+w_{2}\cdot\nabla w_{2}+w_{2}\cdot\nabla(u^{c,\gamma}+w_{1})+(u^{c,\gamma}+w_{1})\cdot\nabla w_{2}+\nabla \pi_{2}=0,\\
\mathrm{div}w_{2}=0,\\
w_{2}(x,0)=w_{02}.
\end{cases}
\end{align*}
Then $w:=w_{1}+w_{2}$ forms a global $L^{2}+L^{3}$ weak solution of the perturbed problem \eqref{NS005} which consists of a $L^{3}$ mild solution $w_{1}\in C_{t}L_{x}^{3}\cap L_{t}^{4}L_{x}^{6}$ with the small initial data $w_{01}\in L^{3}_{\sigma}$ and a $L^{2}$ weak solution $w_{2}\in C_{w}L_{x}^{2}\cap L_{t}^{2}\dot{H}_{x}^{1}$ with the initial data $w_{02}\in L^{2}_{\sigma}\cap L_{\sigma}^{3}$.

\end{remark}

Based on the global well-posedness results obtained in Theorem \ref{THM002}, we further study the asymptotic stability of the global $L^{3}$ mild solution for problem \eqref{NS005} and the corresponding results are given as follows.
\begin{theorem}\label{THM003}
Let $\varepsilon_{0}$ and $\delta_{0}$ be given in Theorem \ref{THM002} and $w$ be the unique global $L^{3}$ mild solution for problem \eqref{NS005} with $w_{0}\in L^{3}_{\sigma}(\mathbb{R}^{3})$. Then

$(i)$ for $q=3,$ if $(c,\gamma)\in M\cap\{|(c,\gamma)|\leq\delta_{0}\}$ and $\|w_{0}\|_{L^{3}(\mathbb{R}^{3})}<\frac{\varepsilon_{0}}{2}$, we obtain that
\begin{align*}
\lim\limits_{t\rightarrow\infty}\|w(t)\|_{L^{3}(\mathbb{R}^{3})}=0;
\end{align*}

$(ii)$ for $q>3$ and any $0<\tau<1$, there exists two small positive constants $\delta=\delta(\tau,q)\leq\delta_{0}$ and $\varepsilon=\varepsilon(\tau,q)\leq\varepsilon_{0}$ such that if $(c,\gamma)\in M\cap\{|(c,\gamma)|\leq\delta\}$ and $\|w_{0}\|_{L^{3}(\mathbb{R}^{3})}<\varepsilon$, we have
\begin{align}\label{AS06}
\|w(t)\|_{L^{q}(\mathbb{R}^{3})}\leq \mathcal{C}_{q}\left(\frac{1}{3}-\frac{1}{q}\right)^{\frac{3}{2}(\frac{1}{3}-\frac{1}{q})}t^{-\frac{3}{2}(\frac{1}{3}-\frac{1}{q})}\|w_{0}\|_{L^{3}(\mathbb{R}^{3})},\quad\text{for all}\;t>0,
\end{align}
where
\begin{align}\label{CON001}
\mathcal{C}_{q}=3^{-\frac{7}{4}}q^{\frac{3}{q}}\left(q-2\right)^{\frac{3}{2q}}\left(\frac{q}{q-2}\right)^{\frac{3}{4}}(4\pi(1-\tau))^{-\frac{3}{2}(\frac{1}{3}-\frac{1}{q})}e^{-6(\frac{1}{3}-\frac{1}{q})}.
\end{align}
\end{theorem}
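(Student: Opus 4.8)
The plan is to run a bootstrap/smoothing argument on the integral equation \eqref{INT001}, leveraging the analyticity of the semigroup $e^{-t\mathcal{L}}$ on $L^q_\sigma$ for $1<q<\infty$ established earlier, together with the smallness control $K(c,\gamma)\to 0$ coming from \eqref{Singular} and the anisotropic Caffarelli--Kohn--Nirenberg inequalities. For part $(i)$, since Theorem \ref{THM002} already gives a global $L^3$ mild solution with $\|w\|_{C_tL^3_x\cap L^4_tL^6_x}\lesssim\|w_0\|_{L^3}$, the decay $\|w(t)\|_{L^3}\to 0$ should follow from a now-standard argument: split $w_0=v_0+(w_0-v_0)$ with $v_0\in L^2_\sigma\cap L^3_\sigma$ small in $L^3$ and $\|w_0-v_0\|_{L^3}$ arbitrarily small; show the $L^2$-part decays by an energy estimate (using the crucial inequality $|\int(w\cdot\nabla u^{c,\gamma})\cdot w|\le K(c,\gamma)\|\nabla w\|_{L^2}^2$ to absorb the bad term into the dissipation since $K(c,\gamma)<1$ for $|(c,\gamma)|$ small), and control the remainder uniformly; then pass to the limit. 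Alternatively one invokes the method of \cite{KPS2017,KP2011} essentially verbatim, as is done for Landau solutions.

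The heart of the matter is part $(ii)$: the \emph{explicit} $L^q$ decay rate $t^{-\frac32(\frac13-\frac1q)}$ with the \emph{sharp} constant $\mathcal{C}_q$ in \eqref{CON001}. First I would establish the $L^3\to L^q$ smoothing bound for the linear evolution: because $-\mathcal{L}=\Delta-\mathbb{P}(\cdot\nabla u^{c,\gamma})-\mathbb{P}(u^{c,\gamma}\cdot\nabla\,\cdot)$ and the perturbation has size $K(c,\gamma)$, one can compare $e^{-t\mathcal{L}}$ to the heat semigroup $e^{t\Delta}$ and obtain, for any $0<\tau<1$,
\begin{align*}
\|e^{-t\mathcal{L}}f\|_{L^q}\le C(1-\tau)^{-\frac32(\frac13-\frac1q)}\,(4\pi t)^{-\frac32(\frac13-\frac1q)}\|f\|_{L^3},
\end{align*}
where the factor $(4\pi t)^{-\frac32(\frac13-\frac1q)}$ is the exact Gaussian kernel constant from $\|e^{t\Delta}f\|_{L^q}\le(4\pi t)^{-\frac32(\frac13-\frac1q)}\|f\|_{L^3}$ via Young's inequality with the sharp $L^{r}$ norm of the Gauss kernel, and $(1-\tau)^{-\cdots}$ absorbs the semigroup perturbation once $\delta=\delta(\tau,q)$ is small enough — this is precisely where the accuracy parameter $\tau$ enters. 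The exponential factor $e^{-6(\frac13-\frac1q)}$ and the powers of $q$, $q-2$ in $\mathcal{C}_q$ come from optimizing: writing $\frac13-\frac1q=\frac1p$ effectively, the kernel estimate produces a term like $p^{3/p}e^{-3/p}$-type expressions through the $L^{p'}$-norm of the heat kernel (the $\Gamma$-function asymptotics / the optimization $\sup_{t}t^{\alpha}e^{-ct}$), and tracking these constants honestly yields \eqref{CON001}.

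Then I would close the nonlinear estimate. Applying $\|\cdot\|_{L^q}$ to \eqref{AQ002}, using the linear bound on $e^{-t\mathcal{L}}w_0$ and, for the Duhamel term, the bound $\|e^{-(t-s)\mathcal{L}}\mathbb{P}\mathrm{div}\,F\|_{L^q}\lesssim (t-s)^{-\frac12-\frac32(\frac{2}{3}-\frac1q)}\|F\|_{L^{3/2}}$ together with $\|w\otimes w\|_{L^{3/2}}\le\|w\|_{L^3}^2$ and the already-known global bound $\sup_{t}\|w(t)\|_{L^3}\le C\varepsilon$, one gets a Gronwall-type inequality for $g(t):=t^{\frac32(\frac13-\frac1q)}\|w(t)\|_{L^q}$ of the form $g(t)\le \mathcal{C}_q\|w_0\|_{L^3}+C\varepsilon\sup_{s\le t}g(s)$; choosing $\varepsilon=\varepsilon(\tau,q)$ small so that $C\varepsilon<1$ minus a margin, and being careful that the nonlinear contribution is lower-order in $\varepsilon$ so it does not spoil the constant in front (this forces $\varepsilon$ small \emph{relative to} $\tau$, which is why $\varepsilon=\varepsilon(\tau,q)$), gives \eqref{AS06} with exactly $\mathcal{C}_q$.

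The main obstacle I anticipate is not the qualitative decay but the \emph{bookkeeping of sharp constants}: one must (a) use the optimal constant in $\|e^{t\Delta}:L^3\to L^q\|$ rather than an arbitrary $C$, which means computing $\|G_t\|_{L^{r}}$ for the Gauss kernel $G_t$ exactly and optimizing the resulting power-times-exponential in $t$, and (b) show that both the semigroup perturbation (size $K(c,\gamma)$, absorbed into $(1-\tau)^{-1}$) and the nonlinear Duhamel term (size $O(\varepsilon^2)$ or $O(\varepsilon)\cdot g$) are genuinely negligible corrections that can be hidden by shrinking $\delta$ and $\varepsilon$ without degrading $\mathcal{C}_q$. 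Handling (a) rigorously — in particular justifying that the comparison $\|e^{-t\mathcal{L}}f\|_{L^q}\le(1-\tau)^{-\frac32(\frac13-\frac1q)}\|e^{t\Delta}f\|_{L^q}$ (or an additive analogue) holds with the perturbation contributing only through the factor $(1-\tau)^{-1}$ — will require a Duhamel expansion of $e^{-t\mathcal{L}}$ around $e^{t\Delta}$ and a careful iteration, using $\|u^{c,\gamma}\cdot\nabla w\|$ and $\|w\cdot\nabla u^{c,\gamma}\|$ bounds from \eqref{Singular} and the anisotropic CKN inequalities; this is the technical core.
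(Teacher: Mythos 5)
Your proposal takes a genuinely different route from the paper, and for part $(ii)$ -- which is the substance of the theorem -- that route has a real gap. The paper does \emph{not} prove \eqref{AS06} by comparing $e^{-t\mathcal{L}}$ with the heat semigroup. It runs the Carlen--Loss scheme: choose a time-dependent exponent $r(t)$ increasing from $3$ to $q$ on $[0,T]$, differentiate $\|w(t)\|_{L^{r(t)}}$, write the $L^{r}$ energy identity for $|w|^{r-2}w$ (whose dissipation carries the coefficient $\tfrac{4(r-2)}{r^{2}}$), absorb the two perturbation terms, the pressure, and the nonlinearity into a fraction $4\tau(r-2)\|\nabla|w|^{r/2}\|_{L^{2}}^{2}$ of that dissipation (this is exactly where $\delta(\tau,q)$ and $\varepsilon(\tau,q)$ are chosen), and then apply the logarithmic Sobolev inequality with the optimal parameter $a=\bigl(4\pi(r-2)(1-\tau)/\dot r\bigr)^{1/2}$. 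The constant is then literally
\begin{align*}
\mathcal{C}_{q}=\exp\Bigl(-\tfrac{1}{T}\bigl(\tfrac13-\tfrac1q\bigr)\int_{0}^{T}\bigl(3+\tfrac32\ln\tfrac{4\pi(r(t)-2)(1-\tau)}{r(t)^{2}}\bigr)\,dt\Bigr),
\end{align*}
evaluated in closed form. The factors $3^{-7/4}q^{3/q}(q-2)^{3/(2q)}(q/(q-2))^{3/4}e^{-6(\frac13-\frac1q)}$ come from $\int_{3}^{q}r^{-2}\ln(r-2)\,dr$ and $\int_{3}^{q}r^{-2}\ln r^{2}\,dr$ together with the additive term $3(1+\ln a)$ in the log-Sobolev inequality; they encode the coefficient $4(r-2)/r^{2}$ of the $L^{r}$ dissipation. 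They are \emph{not} the Young-inequality constants of the Gauss kernel: the sharp $L^{3}\to L^{q}$ bound you propose gives $(4\pi t)^{-\frac32(\frac13-\frac1q)}\rho^{-3/(2\rho)}$ with $\tfrac1\rho=\tfrac23+\tfrac1q$, which does not match \eqref{CON001}, and no amount of ``$\Gamma$-function optimization'' of $\sup_{t}t^{\alpha}e^{-ct}$ produces the $(q-2)$-powers. So your plan of ``getting $\mathcal{C}_{q}$ from the linear part and hiding everything else'' cannot work: the proposed linear estimate does not yield $\mathcal{C}_{q}$ in the first place.

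There is a second, independent problem in your closing step. With $\|w\otimes w\|_{L^{3/2}}\le\|w\|_{L^{3}}^{2}$ the Duhamel kernel is $(t-s)^{-\frac12-\frac32(\frac23-\frac1q)}=(t-s)^{-\frac32+\frac{3}{2q}}$, which is not integrable at $s=t$ for $q>3$; putting one factor of $w$ in $L^{q}$ instead gives $(t-s)^{-1}$, still logarithmically divergent. This is the standard obstruction of the critical $L^{3}$ framework and requires an auxiliary Kato-type norm to close, so the Gronwall inequality $g(t)\le\mathcal{C}_{q}\|w_{0}\|_{L^{3}}+C\varepsilon\sup_{s\le t}g(s)$ does not follow from the bounds you state. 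For part $(i)$ your Calder\'on-type splitting is a reasonable alternative in outline; the paper instead approximates $w_{0}$ in $L^{3}$ by data in $L^{5/2}_{\sigma}\cap L^{3}_{\sigma}$, applies the part-$(ii)$ machinery with $r_{k}(t)$ running from $\tfrac52$ to $3$ to get decay $T^{-1/10}\|w_{0,k}\|_{L^{5/2}}$, and proves a uniform-in-time $C_{t}L^{3}\cap L^{4}_{t}L^{6}_{x}$ stability estimate for the difference of the two solutions.
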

\begin{remark}
In Theorem \ref{THM003}, the parameter $\tau$ is introduced to quantitatively describe the disturbance effect produced by these two terms $w\cdot\nabla u^{c,\gamma}$ and $u^{c,\gamma}\cdot\nabla w$ in equations \eqref{NS005} and confine their disturbance effect to be a given range determined by $\tau$. For any given accuracy parameter $0<\tau<1$, we can find two sufficiently small positive constants $\delta(\tau,q)$ and $\varepsilon(\tau,q)$ to achieve a precise calculation for the value of $\mathcal{C}_{q}$. According to \eqref{CON001}, we see that the constant $\mathcal{C}_{q}$ is smooth in $q$, and
\begin{align*}
\lim_{q\rightarrow3^{+}}\mathcal{C}_{q}=1,\quad\lim_{q\rightarrow+\infty}\mathcal{C}_{q}=\frac{1}{3^{\frac{7}{4}}e^{2}\sqrt{4\pi(1-\tau)}}.
\end{align*}
This fact also implies that the constant $\mathcal{C}_{q}$ is sharp in the sense that there is equality in \eqref{AS06} by first sending $q\rightarrow3^{+}$ and then $t\rightarrow0^{+}$. Based on these above facts, we believe that this precise constant will play an important role in numerically analyzing and simulating the properties of asymptotic stability of the solutions in future work.

\end{remark}

\begin{remark}
According to the global existence results in Theorem \ref{THM002}, we can extend the stability results in Theorem \ref{THM003} to the case of $p\in(3,\frac{9}{2}]$ under a stronger assumed condition of $w_{0}\in L^{p}_{\sigma}(\mathbb{R}^{3})\cap L^{3}_{\sigma}(\mathbb{R}^{3})\cap \{\|w_{0}\|_{L^{3}(\mathbb{R}^{3})}<\varepsilon\}$ for some sufficiently small positive constant $\varepsilon.$ In fact, since the global existence result of $3<p\leq\frac{9}{2}$ is a direct consequence of the global existence result of $p=3$ together with the Gagliardo-Nirenberg interpolation inequality, then we here place main emphasis on the asymptotic stability problem in the case of $p=3$.

\end{remark}

The paper is organized as follows. In Section \ref{SEC002}, we do some preliminary work and list some results which will be used later. Section \ref{SEC003} is devoted to solving the well-posedness of $L^{p}$ mild solutions to the perturbed equations \eqref{NS005}. Then we study the asymptotic stability of the global $L^{3}$ mild solutions with small initial data in Section \ref{SEC004}. Finally, we make clear the relations between $L^{p}$ mild solutions and $L^{2}$ weak solutions for the perturbed problem \eqref{NS005} in Section \ref{SEC005}.

\section{Preliminaries}\label{SEC002}

In this section, we mainly state some results which will be used in the following proofs. As pointed out in the introduction, we will make use of the anisotropic Caffarelli-Kohn-Nirenberg inequalities obtained in \cite{LY202102} to deal with the terms involving singular solution $u^{c,\gamma}$. For readers' convenience, we list the anisotropic Caffarelli-Kohn-Nirenberg inequalities in \cite{LY202102} as follows. For $n\geq2$, let $\theta$ and $s_{i},\alpha_{i},\beta_{i}$, $i=1,2,3$ be real numbers satisfying that
\begin{itemize}
{\it
\item[]
\begin{align}\label{ME001}
s_{1},s_{3}>0,\quad s_{2}\geq1,\quad0\leq \theta \leq1,
\end{align}
\item[]
\begin{align}\label{ME002}
\begin{cases}
\frac{1}{s_{i}}+\frac{\alpha_{i}}{n-1}>0,\\
\beta_{i}\geq0,
\end{cases}
\quad\mathrm{or}\quad
\begin{cases}
\frac{1}{s_{i}}+\frac{\alpha_{i}}{n-1}>0,\\
\beta_{i}<0,\\
\frac{1}{s_{i}}+\frac{\alpha_{i}+\beta_{i}}{n}>0,
\end{cases}\quad i=1,2,3,
\end{align}
\item[]
\begin{align}\label{ME003}
\frac{1}{s_{1}}+\frac{\alpha_{1}+\beta_{1}}{n}=\theta\left(\frac{1}{s_{2}}+\frac{\alpha_{2}+\beta_{2}-1}{n}\right)+(1-\theta)\left(\frac{1}{s_{3}}+\frac{\alpha_{3}+\beta_{3}}{n}\right),  \end{align}
\item[]
\begin{align}\label{ME005}
\beta_{1}\leq\theta\beta_{2}+(1-\theta)\beta_{3},
\end{align}
\item[]
\begin{align}\label{ME006}
\alpha_{1}+\beta_{1}\leq\theta(\alpha_{2}+\beta_{2})+(1-\theta)(\alpha_{3}+\beta_{3}),
\end{align}
\item[]
\begin{align}\label{ME007}
\frac{1}{s_{1}}+\frac{\alpha_{1}}{n-1}\geq\theta\left(\frac{1}{s_{2}}+\frac{\alpha_{2}-1}{n-1}\right)+(1-\theta)\left(\frac{1}{s_{3}}+\frac{\alpha_{3}}{n-1}\right),  \end{align}
\item[]
\begin{align}\label{ME008}
&\frac{1}{s_{1}}\leq\frac{\theta}{s_{2}}+\frac{1-\theta}{s_{3}},\;\,\mathrm{if}\;\frac{1}{s_{1}}+\frac{\alpha_{1}+\beta_{1}}{n}=\frac{1}{s_{2}}+\frac{\alpha_{2}+\beta_{2}-1}{n}=\frac{1}{s_{3}}+\frac{\alpha_{3}+\beta_{3}}{n},\notag\\
&\mathrm{or}\;\theta=0\;\mathrm{or}\;\theta=1\;\mathrm{or}\;\frac{1}{s_{1}}+\frac{\alpha_{1}}{n-1}=\theta\left(\frac{1}{s_{2}}+\frac{\alpha_{2}-1}{n-1}\right)+(1-\theta)\left(\frac{1}{s_{3}}+\frac{\alpha_{3}}{n-1}\right).
\end{align}

    }
\end{itemize}

Then we have
\begin{lemma}[Theorem 1.1 in \cite{LY202102}]\label{ACKN001}
For $n\geq2$, let $\theta$ and $s_{i},\alpha_{i},\beta_{i}$, $i=1,2,3$ be real numbers satisfying conditions \eqref{ME001}--\eqref{ME002}. Then there exists some positive constant $C=C(s_{1},s_{2},s_{3},\alpha_{1},\alpha_{2},\alpha_{3},\beta_{1},\beta_{2},\beta_{3},\theta)$ such that
\begin{align*}
\||x'|^{\alpha_{1}}|x|^{\beta_{1}}u\|_{L^{s_{1}}(\mathbb{R}^{n})}\leq C\||x'|^{\alpha_{2}}|x|^{\beta_{2}}u\|^{\theta}_{L^{s_{2}}(\mathbb{R}^{n})}\||x'|^{\alpha_{3}}|x|^{\beta_{3}}u\|^{1-\theta}_{L^{s_{3}}(\mathbb{R}^{n})}
\end{align*}
holds for any $u\in C_{c}^{\infty}(\mathbb{R}^{n})$ if and only if \eqref{ME003}--\eqref{ME008} hold. Moreover, on any compact subset of the parameter range in which \eqref{ME001}--\eqref{ME002} hold, $C$ is a bounded constant.
\end{lemma}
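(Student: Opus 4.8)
Since this is Theorem~1.1 of \cite{LY202102}, I would only sketch the strategy one uses to reprove it. Note first that the exponent balance \eqref{ME003} carries the term $\frac{\alpha_2+\beta_2-1}{n}$, i.e. the middle quantity effectively carries one derivative; this is what makes the statement a genuine interpolation inequality rather than a pointwise H\"older bound, and it is where the proof has content. The plan has two halves: necessity of \eqref{ME003}--\eqref{ME008}, and sufficiency.

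For necessity, relation \eqref{ME003} is forced by the dilation $u(x)\mapsto u(\lambda x)$, tracking the homogeneity of $dx$ and of the weights $|x'|^{\alpha}|x|^{\beta}$. The one-sided conditions \eqref{ME005}--\eqref{ME007} come from testing on families that concentrate in a thin tube around the singular axis $\{x'=0'\}$, concentrate near the origin, or spread out near infinity, combined with the anisotropic rescalings $x'\mapsto\mu x'$, $x_n\mapsto\nu x_n$ that distinguish the $|x'|$-homogeneity from the $|x|$-homogeneity. The borderline condition \eqref{ME008} corresponds to the degenerate regime in which all three scaling dimensions in \eqref{ME003} coincide, or $\theta\in\{0,1\}$, or equality holds in \eqref{ME007}; there the inequality cannot be improved by interpolation and testing on power-type functions $|x'|^{-a}|x|^{-b}$ cut off at two scales produces exactly the ordering $\frac{1}{s_1}\le\frac{\theta}{s_2}+\frac{1-\theta}{s_3}$.

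For sufficiency, I would pass to coordinates adapted to the geometry: write $x=(x',x_n)$, $r=|x|$, $\sin\phi=|x'|/|x|$ with $\phi\in(0,\pi)$ the polar angle from the $x_n$-axis and $\omega\in\mathbb{S}^{n-2}$ the direction of $x'$, so that $|x'|^{\alpha}|x|^{\beta}=r^{\alpha+\beta}(\sin\phi)^{\alpha}$ and $dx=r^{n-1}(\sin\phi)^{n-2}\,dr\,d\phi\,d\omega$. The weighted norms then factor through a radial density $r^{s(\alpha+\beta)+n-1}$ and an angular density $(\sin\phi)^{s\alpha+n-2}$, and conditions \eqref{ME002} are precisely the requirement that these be locally integrable at $r=0$ and at $\phi=0$. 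The estimate is then built in two steps. First, a one-dimensional weighted interpolation inequality in $r$: for $v=v(r)$,
\begin{align*}
\Big(\int_0^\infty |v|^{s_1}\,r^{a_1}\,dr\Big)^{1/s_1}\le C\Big(\int_0^\infty |v'|^{s_2}\,r^{a_2}\,dr\Big)^{\theta/s_2}\Big(\int_0^\infty |v|^{s_3}\,r^{a_3}\,dr\Big)^{(1-\theta)/s_3},
\end{align*}
proved by writing a suitable power $|v|^{\mu}$ as the integral of its derivative from $0$ or from $\infty$ and applying H\"older's inequality, the admissible $(a_i,s_i)$ being dictated by \eqref{ME003} and the integrability in \eqref{ME002}. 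Second, one restores the angular variables: since the weights split off the factor $(\sin\phi)^{\alpha}$ and $|\nabla u|$ controls $|\partial_r u|$ from below, one applies H\"older's inequality in $(\phi,\omega)$ with exponents matched to \eqref{ME003}, the remaining angular integrals being finite --- and, by the last sentence of Lemma~\ref{ACKN001}, bounded uniformly on compacta --- exactly under \eqref{ME005}, \eqref{ME006}, \eqref{ME007}. Combining the radial and angular estimates via Minkowski's integral inequality to interchange the order of integration yields the claimed bound, and tracking every constant (all of Beta-function type in the parameters) gives the asserted local boundedness of $C$.

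The main obstacle is the degenerate/endpoint regime encoded in \eqref{ME008}: when the scaling dimensions in \eqref{ME003} coincide, or $\theta=0,1$, or equality holds in \eqref{ME007}, the two-step interpolation collapses and one must argue directly --- typically through a logarithmic refinement of the one-dimensional estimate or by reducing to a pure weighted H\"older inequality --- to see that the extra ordering $\frac{1}{s_1}\le\frac{\theta}{s_2}+\frac{1-\theta}{s_3}$ is exactly what is needed. A secondary difficulty is the anisotropy of the gradient: $|\nabla u|$ does not factor through $(r,\phi)$ as cleanly as the weights do, so one must be careful to retain only its radial component in the lower bound while verifying that no loss is incurred in the exponents.
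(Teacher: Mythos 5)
The paper does not prove Lemma~\ref{ACKN001} at all: it is imported verbatim from Theorem~1.1 of \cite{LY202102}, and the citation is the entire justification. So there is no internal argument to compare your sketch against; the honest answer here is simply ``this is quoted from \cite{LY202102}.'' Two remarks on your sketch are nonetheless worth recording. First, you correctly read the statement as an interpolation inequality in which the middle factor carries a gradient: the $-1$ in $\frac{\alpha_{2}+\beta_{2}-1}{n}$ in \eqref{ME003}, and the way Corollary~\ref{MZLEM001} is deduced (with $\|\nabla u\|_{L^{2}}$ on the right), show that the transcription in the paper has dropped a $\nabla$ on the $s_{2}$-norm; your reading matches the original theorem in \cite{LY202102}.

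Second, your sufficiency strategy is not the one used in the cited reference, and as described it has a real gap. \cite{LY202102} does not reduce to a one-dimensional radial inequality plus angular H\"older; it handles the anisotropy by a dyadic decomposition of $\mathbb{R}^{n}$ into the regions $\{2^{-k-1}|x|\le|x'|\le 2^{-k}|x|\}$, applies the classical (isotropic) Caffarelli--Kohn--Nirenberg inequality on each rescaled piece, and sums, treating the $\theta=1$ (Hardy--Sobolev) case first and obtaining general $\theta$ by combining with the elementary weighted H\"older inequality at $\theta=0$ (where \eqref{ME005}--\eqref{ME006} enter). Your plan of keeping only $|\partial_{r}u|$ and running a ray-wise weighted inequality cannot by itself reach the full admissible range: the weight $|x'|^{\alpha}=r^{\alpha}(\sin\phi)^{\alpha}$ couples the radial and angular variables, and in the genuinely anisotropic regime (say $\alpha_{1}$ or $\alpha_{2}$ close to the lower bounds in \eqref{ME002}, mass concentrating on the axis $\{x'=0'\}$) the angular integrals you propose to dispatch by H\"older and Minkowski do not close without exactly the dyadic bookkeeping above; the conditions \eqref{ME007}--\eqref{ME008} are what make that summation converge, not a property of the radial slices. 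Your necessity discussion (scaling for \eqref{ME003}, concentrating test families for \eqref{ME005}--\eqref{ME007}, log-corrected powers for the borderline \eqref{ME008}) is the standard and correct outline. Since the paper itself only cites the result, none of this affects the paper; but as a proof proposal for the lemma, the sufficiency half would need to be replaced by (or supplemented with) the dyadic reduction to the classical CKN inequality.
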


\begin{remark}
Condition \eqref{ME002} is called measure condition such that
$$\||x'|^{\alpha_{i}}|x|^{\beta_{i}}u\|_{L^{s_{i}}(\mathbb{R}^{n})}<\infty,\quad i=1,2,3,\;\text{for any $u\in C_{c}^{\infty}(\mathbb{R}^{n})$},$$
which can be obtained by using Lemma 2.1 in \cite{MZ2022}.
\end{remark}
A direct application of Lemma \ref{ACKN001} gives the following corollary.
\begin{corollary}\label{MZLEM001}
For any $0\leq\alpha<1$ and $u\in C_{c}^{\infty}(\mathbb{R}^{3})$, we have
\begin{align*}
\||x'|^{-\alpha}|x|^{\alpha-1}u\|_{L^{2}(\mathbb{R}^{3})}\leq C(\alpha)\|\nabla u\|_{L^{2}(\mathbb{R}^{3})}.
\end{align*}

\end{corollary}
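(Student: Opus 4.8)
The plan is to obtain the claimed weighted inequality as a direct specialization of the anisotropic Caffarelli–Kohn–Nirenberg inequality in Lemma \ref{ACKN001}, with $n=3$ and the second exponent slot chosen to produce the gradient $\|\nabla u\|_{L^{2}}$. Concretely, I would take $\theta=1$ (so the $(1-\theta)$-slot drops out and conditions \eqref{ME005}--\eqref{ME008} simplify dramatically), $s_{1}=s_{2}=2$, and then force the right-hand side to be $\|\nabla u\|_{L^{2}}$. The subtle point is that Lemma \ref{ACKN001} is stated for a scalar function $u$ with no derivative on the right-hand side; to bring in $\nabla u$ one applies the lemma componentwise to the functions $\partial_{j}u$ with $\alpha_{2}=\beta_{2}=0$, $s_{2}=2$, which is exactly the unweighted $L^{2}$ norm of $\partial_{j}u$. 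Thus the inequality we actually feed into the machine is of Hardy type: $\||x'|^{-\alpha}|x|^{\alpha-1}v\|_{L^{2}(\mathbb{R}^{3})}\leq C\|v\|_{L^{2}(\mathbb{R}^{3})}$ is \emph{false} as stated (the weight is not bounded), so instead one must route the derivative through: apply Lemma \ref{ACKN001} to $u$ itself with the right-hand weight tuned so that, after an integration by parts, $\|\nabla u\|_{L^{2}}$ appears. The cleanest route is to pick parameters on the left as $\alpha_{1}=-\alpha$, $\beta_{1}=\alpha-1$, $s_{1}=2$, and on the right $\alpha_{2}=0$, $\beta_{2}=0$, $s_{2}=2$ applied to $\nabla u$ (treating $\nabla u$ as the test function), i.e. invoke the lemma in the form $\||x'|^{-\alpha}|x|^{\alpha-1}u\|_{L^2}\le C\|\,|x'|^{0}|x|^{0}\nabla u\|_{L^2}$; but since the lemma as printed has no gradient, one legitimizes this by instead using the lemma with $s_{2}$-slot weights $(\alpha_{2},\beta_{2})=(1-\alpha',\ \alpha'-1)$ type choices that are scaling-consistent, and then absorbing $\nabla$ via the standard observation that $|x'|^{a}|x|^{b}u$ with $a+b=0$ is comparable to applying a zero-order weight to $u$ — after which \eqref{ME003} forces the homogeneity to match $\|\nabla u\|_{L^2}$.

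Let me state the step order precisely. First I would record the scaling constraint: both sides must be invariant under $u(x)\mapsto u(\lambda x)$, which in $\mathbb{R}^{3}$ at $L^{2}$ level means the left weight has total homogeneity $-1$ (indeed $-\alpha+(\alpha-1)=-1$, matching the one derivative on the right), so the dimensional balance \eqref{ME003} with $n=3$, $\theta=1$ is automatically satisfied once we set $s_{2}=2$ and the right-hand weights to have homogeneity $0$. Second, I would check the measure conditions \eqref{ME002} for $i=1$: here $\alpha_{1}=-\alpha$, $\beta_{1}=\alpha-1<0$, so we are in the second branch and need $\frac{1}{2}+\frac{-\alpha}{2}>0$ (i.e. $\alpha<1$, which is our hypothesis) and $\frac{1}{2}+\frac{-\alpha+(\alpha-1)}{3}=\frac{1}{2}-\frac{1}{3}=\frac{1}{6}>0$ — both hold. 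For $i=2$ (the gradient slot) the weights are $0$, so the first branch applies trivially. Third, with $\theta=1$ the conditions \eqref{ME005}, \eqref{ME006} reduce to $\beta_{1}\le\beta_{2}$ and $\alpha_{1}+\beta_{1}\le\alpha_{2}+\beta_{2}$, i.e. $\alpha-1\le\beta_{2}$ and $-1\le\alpha_{2}+\beta_{2}$, which are easily arranged; \eqref{ME007} becomes $\frac{1}{2}+\frac{-\alpha}{2}\ge\frac{1}{2}+\frac{\alpha_{2}-1}{2}$, forcing $\alpha_{2}\le 1-\alpha$, again arrangeable; and \eqref{ME008} holds because $\theta=1$. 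Fourth, having verified all hypotheses, Lemma \ref{ACKN001} yields $\||x'|^{-\alpha}|x|^{\alpha-1}u\|_{L^{2}}\le C(\alpha)\||x'|^{\alpha_{2}}|x|^{\beta_{2}}\nabla u\|_{L^{2}}$; choosing $\alpha_{2}=\beta_{2}=0$ (which is consistent with all the reduced inequalities above, since $\alpha-1<0\le 0$ and $-\alpha<1-\alpha$ for $\alpha<1$, wait — we need $\alpha_2 \le 1-\alpha$ with $\alpha_2=0$, true since $\alpha<1$) collapses the right-hand side to $\|\nabla u\|_{L^{2}(\mathbb{R}^{3})}$, which is the claim. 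The boundedness of $C(\alpha)$ on compact subsets of $[0,1)$ follows from the last sentence of Lemma \ref{ACKN001}.

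The only genuine obstacle is the bookkeeping subtlety that Lemma \ref{ACKN001} as printed has no derivative on the right-hand side, so one must be slightly careful about whether the "gradient slot" is literally an instance of the lemma. The honest resolution — and the one I would write out — is that the printed inequality applied to $u$ with a \emph{nontrivial} weight $|x'|^{\alpha_2}|x|^{\beta_2}$ in the $s_2$-slot is exactly the interpolation form needed, and one then invokes the endpoint version where $(\alpha_2,\beta_2)=(0,0)$; if the stated lemma does not literally admit $\theta=1$ with trivial right-weight, one instead uses the companion Hardy inequality $\||x'|^{-1}u\|_{L^2(\mathbb{R}^3)}\le 2\|\nabla u\|_{L^2(\mathbb{R}^3)}$ together with the elementary pointwise bound $|x'|^{-\alpha}|x|^{\alpha-1}\le |x'|^{-1}$ (valid since $|x|\ge|x'|$ and $\alpha\le 1$), which gives the result with the explicit constant $C(\alpha)\le 2$, uniformly in $\alpha\in[0,1)$. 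I expect this second route — pointwise weight domination plus the classical $2$-dimensional-in-$x'$ Hardy inequality — to be the cleanest and is likely what the authors intend; the CKN framing is there mainly to signal that the estimate is a boundary case of their general family.
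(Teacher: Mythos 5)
Your first route --- specializing Lemma \ref{ACKN001} with $n=3$, $\theta=1$, $s_{1}=s_{2}=2$, $(\alpha_{1},\beta_{1})=(-\alpha,\alpha-1)$ and $(\alpha_{2},\beta_{2})=(0,0)$ --- is correct and is exactly what the paper does (the paper offers no written proof beyond ``a direct application of Lemma \ref{ACKN001}''). You also correctly diagnosed the transcription issue: the $s_{2}$-slot of the anisotropic CKN inequality carries $\nabla u$, as the ``$-1$'' in \eqref{ME003} and \eqref{ME007} makes plain, so no integration by parts or auxiliary manoeuvre is needed; one simply checks \eqref{ME001}--\eqref{ME008} as you did. (One small imprecision: with $\theta=1$ the condition \eqref{ME008} is not vacuous --- it requires $\tfrac{1}{s_{1}}\le\tfrac{1}{s_{2}}$ --- but this holds with equality since $s_{1}=s_{2}=2$.)

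The second route, which you declare ``the cleanest and likely what the authors intend,'' is however based on a false inequality, and you should not write it out. The pointwise bound $|x'|^{-\alpha}|x|^{\alpha-1}\le|x'|^{-1}$ is true, but the dominating quantity is useless: the claimed Hardy inequality $\||x'|^{-1}u\|_{L^{2}(\mathbb{R}^{3})}\le 2\|\nabla u\|_{L^{2}(\mathbb{R}^{3})}$ fails, because Hardy's inequality with weight $|y|^{-2}$ requires the $y$-variable to live in dimension at least $3$, whereas here $x'\in\mathbb{R}^{2}$. Concretely, for any $u\in C_{c}^{\infty}(\mathbb{R}^{3})$ that does not vanish on the $x_{3}$-axis one has, for each fixed $x_{3}$ with $u(0',x_{3})\ne 0$,
\begin{align*}
\int_{|x'|<1}\frac{|u(x',x_{3})|^{2}}{|x'|^{2}}\,dx'\;\gtrsim\;|u(0',x_{3})|^{2}\int_{0}^{1}\frac{dr}{r}\;=\;\infty,
\end{align*}
so the left-hand side of your proposed Hardy inequality is infinite while the right-hand side is finite. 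This is precisely the content of the paper's remark following the corollary: the endpoint $\alpha=1$ (weight $|x'|^{-1}$) violates the measure condition \eqref{ME002}, which is why the hypothesis $\alpha<1$ is strict. For $\alpha<1$ the weight $|x'|^{-2\alpha}|x|^{2\alpha-2}$ is locally integrable near the axis, and the anisotropic CKN machinery (your first route) is the correct --- and essentially the only elementary --- way to get the estimate; there is no domination by a single admissible power of $|x'|$ alone.
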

\begin{remark}
As seen in Corollary \ref{MZLEM001}, we restrict the range of $\alpha$ to be in $[0,1)$ and exclude the case of $\alpha=1$, since the case of $\alpha=1$ don't satisfy measure condition \eqref{ME002}.
\end{remark}

For $\rho>0$, define a cone as follow:
\begin{align}\label{CONE999}
\Omega_{\rho}:=\{x\in\mathbb{R}^{3}\,|\,|x'|\leq\rho|x|\}.
\end{align}
From \eqref{Singular}, we obtain
\begin{lemma}\label{CORO001}
Let $u^{c,\gamma}$ be given in \eqref{Singular}. Then we have
\begin{align}\label{Z01}
\begin{cases}
\||x'|^{\frac{1}{2}}|x|^{\frac{1}{2}}u^{c,\gamma}\|_{L^{\infty}(\Omega_{e^{-1}})}\leq K(c,\gamma),\\
\||x|u^{c,\gamma}\|_{L^{\infty}(\mathbb{R}^{3}\setminus\Omega_{e^{-1}})}\leq K(c,\gamma),
\end{cases}
\end{align}
and
\begin{align}\label{Z02}
\||x'||x|\nabla u^{c,\gamma}\|_{L^{\infty}(\mathbb{R}^{3})}\leq K(c,\gamma),
\end{align}
where $\Omega_{e^{-1}}$ is a cone defined by \eqref{CONE999}, $K(c,\gamma)$ satisfies that $K(c,\gamma)\rightarrow0$, as $|(c,\gamma)|\rightarrow0$.

\end{lemma}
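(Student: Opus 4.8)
The plan is to derive the bounds in Lemma~\ref{CORO001} directly from the pointwise asymptotics \eqref{Singular}, namely
\begin{align*}
|u^{c,\gamma}|=\frac{2|c_{3}|}{|x|}\ln\frac{|x|}{|x'|}+O(1)\frac{|(c,\gamma)|}{|x|},\qquad
|\nabla u^{c,\gamma}|=\frac{2|c_{3}|}{|x'||x|}+\frac{O(1)|(c,\gamma)|}{|x|^{2}}\ln\frac{|x|}{|x'|},
\end{align*}
combined with the observation that on the cone $\Omega_{\rho}$ we control $\ln(|x|/|x'|)$ from above by a power of $|x'|/|x|$ and off the cone we control it from below. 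Since both right-hand sides are multiplied either by $|c_{3}|\le|(c,\gamma)|$ or by $|(c,\gamma)|$, the resulting constant $K(c,\gamma)$ will automatically vanish as $|(c,\gamma)|\to0$, which is the only qualitative feature we need.

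The first estimate in \eqref{Z01} is about the weighted sup-norm of $u^{c,\gamma}$ \emph{inside} the cone $\Omega_{e^{-1}}=\{|x'|\le e^{-1}|x|\}$. Writing $t:=|x'|/|x|\in(0,e^{-1}]$, the factor $|x'|^{1/2}|x|^{1/2}|u^{c,\gamma}|$ equals $|x|^{1/2}|x'|^{1/2}$ times the expression above, i.e.
\begin{align*}
|x'|^{1/2}|x|^{1/2}|u^{c,\gamma}|
\le 2|c_{3}|\,t^{1/2}\ln\tfrac1t+C|(c,\gamma)|\,t^{1/2}.
\end{align*}
On $(0,e^{-1}]$ the function $t\mapsto t^{1/2}\ln(1/t)$ is bounded (it tends to $0$ as $t\to0^{+}$ and is continuous), and $t^{1/2}\le1$; hence the whole quantity is bounded by a universal multiple of $|(c,\gamma)|$, which we absorb into $K(c,\gamma)$. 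The second estimate in \eqref{Z01} concerns the region \emph{outside} $\Omega_{e^{-1}}$, where $|x'|>e^{-1}|x|$, so $t>e^{-1}$ and therefore $0\le\ln(1/t)<1$; thus from \eqref{Singular} we get $|x|\,|u^{c,\gamma}|\le 2|c_{3}|\ln(1/t)+C|(c,\gamma)|\le C|(c,\gamma)|$, again of the desired form. For \eqref{Z02}, multiply the gradient bound by $|x'||x|$ to obtain
\begin{align*}
|x'||x|\,|\nabla u^{c,\gamma}|\le 2|c_{3}|+C|(c,\gamma)|\,\frac{|x'|}{|x|}\ln\frac{|x|}{|x'|}
\le 2|c_{3}|+C|(c,\gamma)|,
\end{align*}
since $t\ln(1/t)$ is globally bounded on $(0,1]$; this holds on all of $\mathbb{R}^{3}\setminus\{x'=0'\}$, hence on $\mathbb{R}^{3}$ up to the negligible ray, giving \eqref{Z02} with a constant proportional to $|(c,\gamma)|$.

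In each case one then sets $K(c,\gamma)$ to be the maximum of the three constants produced, noting that each is $C|(c,\gamma)|$ (or $C|c_{3}|\le C|(c,\gamma)|$) for an absolute $C$, so $K(c,\gamma)\to0$ as $|(c,\gamma)|\to0$, as claimed. There is essentially no obstacle here: the entire content is the elementary fact that $s\mapsto s^{a}\ln(1/s)$ is bounded on $(0,1]$ for any $a>0$ and bounded on $[e^{-1},\infty)$ with value in $[0,1)$ — the only mild care needed is keeping track of which region ($\Omega_{e^{-1}}$ or its complement) makes $\ln(|x|/|x'|)$ either small times a positive power of the ratio, or simply bounded. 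The $O(1)$ in \eqref{Singular} should be read as a fixed constant uniform in $(c,\gamma)$ on the relevant parameter set $M\cap\{|(c,\gamma)|\le\mu_0\}$, which is exactly the content of Corollary~2.1 in \cite{LY2021} as quoted.
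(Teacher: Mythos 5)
Your proof is correct and follows essentially the same route as the paper: split into the regions inside and outside the cone $\Omega_{e^{-1}}$, and use the elementary bound on $t^{\alpha}\ln(1/t)$ for $t\in(0,1]$ (with $\alpha=\tfrac12$ for the velocity inside the cone and $\alpha=1$ for the gradient), together with $\ln(1/t)\le 1$ off the cone and the uniformity of the $O(1)$ in \eqref{Singular}. No issues.
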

\begin{proof}
To begin with, for any $x\in\Omega_{e^{-1}}$, we have $-\ln\frac{|x'|}{|x|}\geq1$. This, together with the fact that $|t^{\alpha}\ln t|\leq (\alpha e)^{-1}$ in $[0,1]$ with $0<\alpha\leq1$, shows that
\begin{align}\label{PU09}
\frac{1}{|x|}\leq-\frac{1}{|x|}\ln\frac{|x'|}{|x|}\leq \frac{2}{e}|x'|^{-\frac{1}{2}}|x|^{-\frac{1}{2}}.
\end{align}
Combining \eqref{Singular} and \eqref{PU09}, we deduce
\begin{align*}
\||x'|^{\frac{1}{2}}|x|^{\frac{1}{2}}u^{c,\gamma}\|_{L^{\infty}(\Omega_{e^{-1}})}\leq K(c,\gamma).
\end{align*}
On the other hand, for any $x\in \mathbb{R}^{3}\setminus\Omega_{e^{-1}}$, we have $-\ln\frac{|x'|}{|x|}\leq1$. It then follows from \eqref{Singular} that
\begin{align*}
\||x|u^{c,\gamma}\|_{L^{\infty}(\mathbb{R}^{3}\setminus\Omega_{e^{-1}})}\leq K(c,\gamma).
\end{align*}
Finally, since
\begin{align*}
-\frac{1}{|x|^{2}}\ln\frac{|x'|}{|x|}\leq\frac{1}{e}\frac{1}{|x'||x|},\quad\text{for any $x\in\mathbb{R}^{3}$},
\end{align*}
then we have from \eqref{Singular} that \eqref{Z02} holds. The proof is complete.

\end{proof}

\begin{definition}
Let $1<q<\infty$ and $n\geq2$. We say that $w$ is an $A_{q}$-weight, if there is a positive constant $C=C(n,q,w)$ such that
\begin{align*}
\dashint_{B}wdx\left(\dashint_{B}w^{-\frac{1}{q-1}}dx\right)^{q-1}\leq C(n,q,w),\quad\mathrm{with}\;\dashint_{B}=\frac{1}{|B|}\int_{B},
\end{align*}
for any ball $B$ in $\mathbb{R}^{n}$.
\end{definition}
For $n\geq2$ and $q>1$, define the following indexing sets:
\begin{align*}
\begin{cases}
\mathcal{A}=\{(\theta_{1},\theta_{2}): \theta_{1}>-(n-1),\,\theta_{2}\geq0\},\\
\mathcal{B}=\{(\theta_{1},\theta_{2}):\theta_{1}>-(n-1),\,\theta_{2}<0,\,\theta_{1}+\theta_{2}>-n\},\\
\mathcal{C}_{q}=\{(\theta_{1},\theta_{2}):\theta_{1}<(n-1)(q-1),\,\theta_{2}\leq0\},\\
\mathcal{D}_{q}=\{(\theta_{1},\theta_{2}):\theta_{1}<(n-1)(q-1),\,\theta_{2}>0,\,\theta_{1}+\theta_{2}<n(q-1)\}.
\end{cases}
\end{align*}
Recall Theorem 2.6 in \cite{MZ2022} as follows.
\begin{lemma}[Theorem 2.6 in \cite{MZ2022}]\label{MZLEM002}
Let $1<q<\infty$ and $n\geq2$. If $(\theta_{1},\theta_{2})\in(\mathcal{A}\cup\mathcal{B})\cap(\mathcal{C}_{q}\cup\mathcal{D}_{q})$, then $w=|x'|^{\theta_{1}}|x|^{\theta_{2}}$ is an $A_{q}$-weight.
\end{lemma}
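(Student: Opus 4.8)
The plan is to verify the $A_{q}$ inequality of the preceding definition directly on balls, using the homogeneity and rotational symmetry of $w=|x'|^{\theta_{1}}|x|^{\theta_{2}}$ to normalize and then distinguishing a few geometric cases. Since $w$ is homogeneous of degree $\theta_{1}+\theta_{2}$, the substitution $x=Ry$ shows that the quantity $\big(\dashint_{B}w\,dx\big)\big(\dashint_{B}w^{-1/(q-1)}\,dx\big)^{q-1}$ on a ball $B=B(x_{0},R)$ equals the same quantity on $B(x_{0}/R,1)$, because the two factors produce the powers $R^{\theta_{1}+\theta_{2}}$ and $R^{-(\theta_{1}+\theta_{2})}$, which cancel; moreover $w$ is invariant under rotations in the $x'=(x_{1},\dots,x_{n-1})$ variables. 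So I would reduce to bounding that quantity, uniformly in $x_{0}$, over unit balls $B=B(x_{0},1)$, writing $a:=|x_{0}'|$ and $r:=|x_{0}|$ with $0\le a\le r$. It is useful to first record that $\mathcal{A}\cup\mathcal{B}=\{\theta_{1}>-(n-1),\ \theta_{1}+\theta_{2}>-n\}$ and $\mathcal{C}_{q}\cup\mathcal{D}_{q}=\{\theta_{1}<(n-1)(q-1),\ \theta_{1}+\theta_{2}<n(q-1)\}$, so that the hypothesis $(\theta_{1},\theta_{2})\in(\mathcal{A}\cup\mathcal{B})\cap(\mathcal{C}_{q}\cup\mathcal{D}_{q})$ amounts exactly to
\[
-(n-1)<\theta_{1}<(n-1)(q-1),\qquad -n<\theta_{1}+\theta_{2}<n(q-1).
\]

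First I would treat the balls near the origin: if $r\le 10$ then $B\subset B(0,11)$ and $|B|$ equals the fixed volume of the unit ball, so it is enough to show that the two integrals $\int_{B(0,11)}|x'|^{\theta_{1}}|x|^{\theta_{2}}\,dx$ and $\int_{B(0,11)}|x'|^{-\theta_{1}/(q-1)}|x|^{-\theta_{2}/(q-1)}\,dx$ are finite, their product then bounding the $A_{q}$ quantity on every such $B$. Passing to polar coordinates $x=\rho\omega$ (so $|x|=\rho$, $|x'|=\rho|\omega'|$, $dx=\rho^{n-1}\,d\rho\,d\sigma(\omega)$), the first integral factors as $\int_{0}^{11}\rho^{\theta_{1}+\theta_{2}+n-1}\,d\rho\cdot\int_{S^{n-1}}|\omega'|^{\theta_{1}}\,d\sigma(\omega)$, which is finite precisely when $\theta_{1}+\theta_{2}>-n$ and $\theta_{1}>-(n-1)$; likewise the second integral is finite precisely when $\theta_{1}+\theta_{2}<n(q-1)$ and $\theta_{1}<(n-1)(q-1)$. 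All four conditions are in force by hypothesis.

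Next I would treat the balls far from the origin: if $r>10$, then $r-1\le|x|\le r+1$ throughout $B=B(x_{0},1)$, so $|x|^{\theta_{2}}$ is comparable to $r^{\theta_{2}}$ with constants depending only on $\theta_{2}$; factoring this out, the $A_{q}$ quantity for $w$ on $B$ becomes comparable (the powers of $r$ again cancelling between the two factors) to the $A_{q}$ quantity $\dashint_{B}|x'|^{\theta_{1}}dx\,\big(\dashint_{B}|x'|^{-\theta_{1}/(q-1)}dx\big)^{q-1}$ of $|x'|^{\theta_{1}}$ on $B$. To bound this I would split once more. If $a>10$ then similarly $a-1\le|x'|\le a+1$ on $B$, so $|x'|^{\theta_{1}}$ is comparable to $a^{\theta_{1}}$ and the quantity is $\approx 1$. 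If $a\le 10$ then the $x'$-projection of $B$ lies in the $(n-1)$-ball $B'(0,11)$ while $B$ sits in a slab of $x_{n}$-width $2$, so by Fubini $\int_{B}|x'|^{\theta_{1}}dx\le 2\int_{B'(0,11)}|x'|^{\theta_{1}}\,dx'$ and $\int_{B}|x'|^{-\theta_{1}/(q-1)}dx\le 2\int_{B'(0,11)}|x'|^{-\theta_{1}/(q-1)}\,dx'$, both finite because $-(n-1)<\theta_{1}<(n-1)(q-1)$; since $|B|$ is a fixed constant, the quantity is bounded. Collecting the cases yields a bound depending only on $n,q,\theta_{1},\theta_{2}$, which is the assertion. (Alternatively, this last case can be shortened by invoking the classical fact that the power $|x'|^{\theta_{1}}$ of the distance to the codimension-$(n-1)$ subspace $\{x'=0\}$ is an $A_{q}(\mathbb{R}^{n})$-weight exactly when $-(n-1)<\theta_{1}<(n-1)(q-1)$.)

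The argument uses no deep idea; its whole content is a careful case analysis, so the only delicate points are bookkeeping ones: keeping the comparability constants in the two ``far'' cases independent of $r$ and $a$, and---what actually pins down the sharp range---checking that the integrability exponents emerging in the ``near the origin'' case reproduce exactly the two pairs of strict inequalities $-(n-1)<\theta_{1}<(n-1)(q-1)$ and $-n<\theta_{1}+\theta_{2}<n(q-1)$ that describe $(\mathcal{A}\cup\mathcal{B})\cap(\mathcal{C}_{q}\cup\mathcal{D}_{q})$. I would expect that matching step to be the crux of a clean write-up.
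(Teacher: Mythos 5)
Your proof is correct. Note, though, that the paper itself offers no proof of this lemma: it is imported verbatim as Theorem 2.6 of \cite{MZ2022}, so there is no internal argument to compare against, and what you have written is a genuinely self-contained alternative to citing that reference. The two checkpoints you flag as delicate do check out. First, the identification $\mathcal{A}\cup\mathcal{B}=\{\theta_{1}>-(n-1),\ \theta_{1}+\theta_{2}>-n\}$ holds because on $\mathcal{A}$ one has $\theta_{1}+\theta_{2}\geq\theta_{1}>-(n-1)>-n$ automatically, and symmetrically $\mathcal{C}_{q}\cup\mathcal{D}_{q}=\{\theta_{1}<(n-1)(q-1),\ \theta_{1}+\theta_{2}<n(q-1)\}$ because on $\mathcal{C}_{q}$ one has $\theta_{1}+\theta_{2}\leq\theta_{1}<(n-1)(q-1)<n(q-1)$; so the hypothesis is exactly your two pairs of strict inequalities. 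Second, the polar-coordinate factorization $\int_{B(0,11)}|x'|^{\theta_{1}}|x|^{\theta_{2}}dx=\big(\int_{0}^{11}\rho^{\theta_{1}+\theta_{2}+n-1}d\rho\big)\big(\int_{S^{n-1}}|\omega'|^{\theta_{1}}d\sigma\big)$ is exact, and the spherical factor is finite precisely for $\theta_{1}>-(n-1)$ since $|\omega'|$ is comparable to the geodesic distance to the two poles $\pm e_{n}$ on the $(n-1)$-sphere; this is what makes the ``near the origin'' case reproduce the sharp range. The scaling reduction to unit balls is legitimate because the powers $R^{\theta_{1}+\theta_{2}}$ and $R^{-(\theta_{1}+\theta_{2})}$ cancel between the two averages and $x_{0}/R$ sweeps all of $\mathbb{R}^{n}$, and in the two ``far'' regimes the comparability constants are bounded by fixed powers of $11/9$, hence uniform. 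The only cosmetic caveat is that your argument proves sufficiency of the stated range, which is all the lemma asserts (the cited theorem in \cite{MZ2022} is in fact a characterization, but necessity is not needed here).
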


We now introduce the contraction mapping theorem (see Lemma 5.5 in \cite{BCD2011}), which is the core tool to application of the perturbation theory.
\begin{lemma}\label{PER001}
Let $E$ be a Banach space, $N$ be a continuous bilinear map from $E\times E$ to $E$, and $\alpha$ be a positive real number satisfying that
\begin{align*}
\alpha<\frac{1}{4\|N\|},\quad\|N\|:=\sup\limits_{\|u\|,\|v\|\leq1}\|N(u,v)\|.
\end{align*}
Then for any $a\in B(0,\alpha)$ (i.e., with center $0$ and radius $\alpha$) in $E$, there exists a unique $x\in B(0,2\alpha)$ such that
\begin{align*}
x=a+N(x,x).
\end{align*}

\end{lemma}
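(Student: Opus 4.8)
The plan is to realize $x$ as the unique fixed point of the affine–quadratic map $\Phi(y):=a+N(y,y)$ on the closed ball $\overline{B}(0,2\alpha)$ of $E$, via the Banach contraction principle (equivalently, via the Picard iteration $y_{0}=a$, $y_{k+1}=\Phi(y_{k})$). Since $\overline{B}(0,2\alpha)$ is a closed subset of the Banach space $E$, it is a complete metric space for the induced distance, so it suffices to check that $\Phi$ maps this ball into itself and is a strict contraction there.

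First I would verify the stability of the ball. For $y\in\overline{B}(0,2\alpha)$, continuity and bilinearity of $N$ give $\|N(y,y)\|\le\|N\|\,\|y\|^{2}\le 4\alpha^{2}\|N\|$, hence
\[
\|\Phi(y)\|\le\|a\|+\|N(y,y)\|\le\alpha+4\alpha^{2}\|N\|=\alpha\bigl(1+4\alpha\|N\|\bigr)<2\alpha,
\]
where the last step uses the hypothesis $4\alpha\|N\|<1$, which is exactly $\alpha<1/(4\|N\|)$. Thus $\Phi$ carries $\overline{B}(0,2\alpha)$ into the open ball $B(0,2\alpha)$.

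Next I would establish the contraction estimate. For $y,z\in\overline{B}(0,2\alpha)$, bilinearity yields the polarization-type identity $N(y,y)-N(z,z)=N(y-z,y)+N(z,y-z)$, whence
\[
\|\Phi(y)-\Phi(z)\|\le\|N\|\bigl(\|y\|+\|z\|\bigr)\|y-z\|\le 4\alpha\|N\|\,\|y-z\|.
\]
Writing $\kappa:=4\alpha\|N\|<1$, the map $\Phi$ is a $\kappa$-contraction of the complete metric space $\overline{B}(0,2\alpha)$ into itself, so by the Banach fixed-point theorem it has a unique fixed point $x\in\overline{B}(0,2\alpha)$, i.e. $x=a+N(x,x)$. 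The self-mapping bound of the previous paragraph even forces $x\in B(0,2\alpha)$, and uniqueness within $B(0,2\alpha)$ is immediate since any solution lying in $B(0,2\alpha)$ a fortiori lies in $\overline{B}(0,2\alpha)$.

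There is essentially no serious obstacle: the argument is the textbook Picard–Banach scheme, and the only point requiring care is the numerical bookkeeping that makes both the radius bound $\alpha+4\alpha^{2}\|N\|$ and the contraction factor $\kappa=4\alpha\|N\|$ fall strictly below their critical thresholds ($2\alpha$ and $1$, respectively) — which is precisely what the assumption $\alpha<1/(4\|N\|)$ is tailored to provide. If one prefers not to invoke the Banach fixed-point theorem as a black box, the same conclusion follows by checking directly that the Picard iterates satisfy $\|y_{k+1}-y_{k}\|\le\kappa^{k}\|y_{1}-y_{0}\|$, so that $(y_{k})$ is Cauchy in the complete set $\overline{B}(0,2\alpha)$ and its limit is the desired fixed point.
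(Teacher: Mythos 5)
Your proof is correct: the self-mapping bound $\alpha+4\alpha^{2}\|N\|<2\alpha$, the bilinear splitting $N(y,y)-N(z,z)=N(y-z,y)+N(z,y-z)$ giving the contraction factor $\kappa=4\alpha\|N\|<1$, and the appeal to the Banach fixed-point theorem on the closed ball are exactly the standard argument. The paper itself does not prove this lemma but simply cites Lemma 5.5 of \cite{BCD2011}, whose proof is the same Picard--Banach scheme you describe, so there is nothing to add.
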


\section{Well-posedness for solutions to the perturbed equations}\label{SEC003}

Before using Lemma \ref{PER001} to prove Theorems \ref{THM001} and \ref{THM002}, we need first study the well-posedness for the linear part $a$ and the nonlinear part $N$ in $\eqref{INT001}$, respectively. From Theorem 2.1 in \cite{FJR1972}, we see that the linear part $a$ is a solution of
\begin{align}\label{LINE001}
\begin{cases}
\partial_{t}a-\Delta a+(a\cdot\nabla)u^{c,\gamma}+(u^{c,\gamma}\cdot\nabla)a+\nabla\pi_{1}=0,\\
\mathrm{div}a=0,\\
a(x,0)=w_{0}(x).
\end{cases}
\end{align}
That is, for any $\varphi\in C^{\infty}_{c}([0,\infty)\times\mathbb{R}^{3})$ with $\mathrm{div}\varphi=0,$
\begin{align*}
\int_{\mathbb{R}^{3}}w_{0}\varphi dx+\int^{\infty}_{0}\int_{\mathbb{R}^{3}}\left(a(\partial_{t}\varphi+\Delta\varphi+u^{c,\gamma}\cdot\nabla\varphi)-(a\cdot\nabla)u^{c,\gamma}\varphi\right)dxdt=0.
\end{align*}

With regard to the linear part $a$, we have
\begin{lemma}\label{Lem01}
For $p\geq2$ and $w_{0}\in L^{p}_{\sigma}(\mathbb{R}^{3})$, there exists a small constant $\delta_{p}>0$ such that if $(c,\gamma)\in M\cap\{|(c,\gamma)|\leq\delta_{p}\}$, then equations \eqref{LINE001} has a unique global-in-time solution $a\in C_{t}L_{x}^{p}\cap L_{t}^{\frac{p}{\alpha}}L_{x}^{\frac{3p}{3-2\alpha}}$ for any $0<\alpha\leq1$. Moreover, this solution satisfies
\begin{align}\label{ESTI001}
\|a(\cdot,s)\|_{L^{p}}\leq\|a(\cdot,t)\|_{L^{p}},\quad\text{for any $0\leq t\leq s<\infty$},
\end{align}
and, for any $0<\alpha\leq1,$
\begin{align*}
\|a\|_{C_{t}L_{x}^{p}\cap L_{t}^{\frac{p}{\alpha}}L_{x}^{\frac{3p}{3-2\alpha}}}+\big\|\nabla|a|^{\frac{p}{2}}\big\|^{\frac{2}{p}}_{L_{t}^{2}L_{x}^{2}}\leq C(p,\alpha,c,\gamma)\|w_{0}\|_{L^{p}}.
\end{align*}

\end{lemma}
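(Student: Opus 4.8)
The plan is to construct $a$ via a fixed-point/energy argument for the linear equation \eqref{LINE001}, treating the singular terms $(a\cdot\nabla)u^{c,\gamma}$ and $(u^{c,\gamma}\cdot\nabla)a$ perturbatively, using that their size is controlled by $K(c,\gamma)\to0$ from Lemma \ref{CORO001}. First I would establish the basic $L^{p}$ energy estimate: multiply the equation by $|a|^{p-2}a$ and integrate. The term $(u^{c,\gamma}\cdot\nabla)a$ integrates to zero after integration by parts since $\mathrm{div}\,u^{c,\gamma}=0$ (modulo justifying this for the singular field, which is where the cone decomposition $\Omega_{e^{-1}}$ and the weighted bounds \eqref{Z01} enter). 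The dissipation term $-\int\Delta a\,|a|^{p-2}a$ produces (a multiple of) $\|\nabla|a|^{p/2}\|_{L^{2}}^{2}$. The dangerous term is $\int (a\cdot\nabla)u^{c,\gamma}\cdot|a|^{p-2}a$; after integrating by parts one transfers the derivative off $u^{c,\gamma}$ to land on $|a|^{p/2}$-type quantities, and then one bounds $\||x'||x|\nabla u^{c,\gamma}\|_{L^\infty}$ or equivalently $\||x'|^{1/2}|x|^{1/2}u^{c,\gamma}\|_{L^\infty}$ against $K(c,\gamma)$ together with the anisotropic Caffarelli--Kohn--Nirenberg inequality of Corollary \ref{MZLEM001} (applied to $|a|^{p/2}$) to absorb it into the dissipation, provided $K(c,\gamma)$, hence $|(c,\gamma)|$, is small enough — this fixes $\delta_{p}$. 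This yields the differential inequality $\frac{d}{dt}\|a\|_{L^{p}}^{p}+c_{0}\|\nabla|a|^{p/2}\|_{L^{2}}^{2}\le 0$, giving monotonicity \eqref{ESTI001} and, after time integration, the bound on $\|\nabla|a|^{p/2}\|_{L^{2}_{t}L^{2}_{x}}$ and on $\|a\|_{C_{t}L^{p}_{x}}$.

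Next I would upgrade to the mixed space-time norm $L_{t}^{p/\alpha}L_{x}^{3p/(3-2\alpha)}$. The exponents are exactly the ones for which Sobolev embedding $\dot H^{1}(\mathbb{R}^3)\hookrightarrow L^{6}$ applied to $|a|^{p/2}$ gives $\||a|^{p/2}\|_{L^{2}_{t}L^{6}_{x}}\lesssim \|\nabla|a|^{p/2}\|_{L^{2}_{t}L^{2}_{x}}$, i.e. $\|a\|_{L^{p}_{t}L^{3p}_{x}}$ is controlled; interpolating this with $\|a\|_{L^{\infty}_{t}L^{p}_{x}}$ along the line $(\frac{\alpha}{p},\frac{3-2\alpha}{3p})$, $0<\alpha\le 1$, produces precisely $L_{t}^{p/\alpha}L_{x}^{3p/(3-2\alpha)}$ with norm $\le C(p,\alpha,c,\gamma)\|w_0\|_{L^p}$, completing the stated a priori estimate.

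For existence I would set up a Galerkin or a contraction scheme on a mollified/regularized version of \eqref{LINE001} (e.g. regularize $u^{c,\gamma}$ away from the singular ray, or run the semigroup iteration $a^{(k+1)}=e^{t\Delta}w_0-\int_0^t e^{(t-s)\Delta}\mathbb{P}[(a^{(k)}\cdot\nabla)u^{c,\gamma}+(u^{c,\gamma}\cdot\nabla)a^{(k)}]\,ds$), obtain uniform bounds from the estimates above, and pass to the limit; since the equation is linear, weak compactness in $C_t L^p_x\cap L^2_t(\dot H^1$-type$)$ plus the Aubin--Lions lemma suffices to identify the limit as a weak solution. Uniqueness follows from the same energy estimate applied to the difference of two solutions (which solves the homogeneous linear equation with zero data), using that $K(c,\gamma)$ is small enough to kill the perturbative terms — for $p=2$ this is the plain $L^2$ energy identity, and for general $p$ one can argue uniqueness in the $L^2\cap L^p$ class or directly. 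The equivalence of the resulting object with the Duhamel formula $a=e^{-t\mathcal L}w_0$ is then the Fabes--Jones--Rivière theory cited in the text.

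The main obstacle is the rigorous treatment of the singular drift: making the integrations by parts legitimate despite $u^{c,\gamma}$ being merely $(-1)$-homogeneous with a singular ray, and showing the perturbation term is genuinely absorbable. Concretely, one must split $\mathbb{R}^3=\Omega_{e^{-1}}\cup(\mathbb{R}^3\setminus\Omega_{e^{-1}})$ and on the cone use the weight $|x'|^{-1/2}|x|^{-1/2}$ bound from \eqref{Z01} feeding into Corollary \ref{MZLEM001} with $\alpha=\tfrac12$, while off the cone use the milder $|x|^{-1}$ bound and a Hardy inequality; the bookkeeping of exactly how much of $\|\nabla|a|^{p/2}\|_{L^2}^2$ is consumed, and verifying the $A_q$-weight / measure conditions \eqref{ME002} are met (hence the exclusion of the endpoint $\alpha=1$ noted after Corollary \ref{MZLEM001}), is the delicate part. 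Once the smallness of $K(c,\gamma)$ is quantified, everything else is routine parabolic energy analysis.
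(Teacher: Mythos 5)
Your overall strategy --- the $|a|^{p-2}a$ energy multiplier, the cone decomposition $\Omega_{e^{-1}}$ with the weighted bounds of Lemma \ref{CORO001} feeding Corollary \ref{MZLEM001}, absorption into the dissipation for $K(c,\gamma)$ small, Gagliardo--Nirenberg interpolation to reach $L_t^{p/\alpha}L_x^{3p/(3-2\alpha)}$, and a Picard-type iteration for existence --- is essentially the paper's. However, there is one genuine gap: you never account for the pressure. Equation \eqref{LINE001} carries $\nabla\pi_1$ with $\pi_1=(-\Delta)^{-1}\partial_i\partial_j(u^{c,\gamma}\otimes a+a\otimes u^{c,\gamma})$, and for $p>2$ the multiplier $|a|^{p-2}a$ is \emph{not} divergence-free, so the term $\int_{\mathbb{R}^3}\pi_1\,\mathrm{div}(|a|^{p-2}a)$ survives and is of exactly the same strength as the drift term you do treat. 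Controlling it is not routine: the paper (see \eqref{ZZW006}) needs the boundedness of the Calder\'{o}n--Zygmund operator $(-\Delta)^{-1}\partial_i\partial_j$ on $L^{p}$ spaces weighted by $(|x'||x|)^{\frac{p-2}{2}}$ and $|x|^{p-2}$, which in turn requires verifying via Lemma \ref{MZLEM002} that these anisotropic weights are $A_p$-weights. You mention $A_q$-weights only in connection with the measure condition \eqref{ME002} for the CKN inequality, which is a different matter; without the weighted Riesz-transform bound the differential inequality $\frac{d}{dt}\|a\|_{L^p}^p+c_0\|\nabla|a|^{\frac{p}{2}}\|_{L^2}^2\le0$ is not established for any $p>2$, and with it neither \eqref{ESTI001} nor the rest of the lemma.

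Two smaller points. The cancellation $\int(u^{c,\gamma}\cdot\nabla)a\cdot|a|^{p-2}a=\frac{1}{p}\int u^{c,\gamma}\cdot\nabla|a|^{p}=0$ you invoke is available only at the a priori level on the solution itself; in the semigroup iteration you propose for existence the drift is lagged one step, so both transport terms must be estimated (as in \eqref{ZZW003}) rather than cancelled --- harmless, but it should be said, and it is why the paper's contraction constant $\tau_p<1$ comes from the smallness of $K(c,\gamma)$. Also, strong continuity of $a$ in $L^p$ at $t=0$ does not follow from weak compactness or Aubin--Lions alone; the paper upgrades weak convergence to norm convergence using the monotonicity \eqref{ESTI001}, and some such argument is needed to place $a$ in $C_tL^p_x$ rather than merely $L^\infty_tL^p_x$.
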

\begin{remark}
We will prove Lemmas \ref{Lem01} and \ref{Lem02} by combining the energy estimates and the method of induction.

\end{remark}

\begin{proof}
To begin with, we show the existence of $a$ by using the classical Picard iteration scheme. Set $a_{0}=0$ and construct an iterative sequence $\{a_{k}\}$ satisfying the following equations: for $k\geq1$,
\begin{align}\label{QTN001}
\begin{cases}
\partial_{t}a_{k}-\Delta a_{k}=-\mathrm{div}(u^{c,\gamma}\otimes a_{k-1}+a_{k-1}\otimes u^{c,\gamma})-\nabla\pi_{k-1},\\
\mathrm{div}a_{k}=0,\\
\pi_{k-1}=(-\Delta)^{-1}\partial_{i}\partial_{j}(u^{c,\gamma}\otimes a_{k-1}+a_{k-1}\otimes u^{c,\gamma}),\\
a_{k}(x,0)=w_{0}.
\end{cases}
\end{align}

{\bf Step 1.} Claim that for $k\geq1$, $a_{k}\in L^{\infty}([0,\infty);L^{p}(\mathbb{R}^{3}))\cap L^{\frac{p}{\alpha}}([0,\infty);L^{\frac{3p}{3-2\alpha}}(\mathbb{R}^{3}))$ with any $0<\alpha\leq1$, and $\nabla|a_{k}|^{\frac{p}{2}}\in L^{2}([0,\infty);L^{2}(\mathbb{R}^{3}))$. For $k=1,$ assume without loss of generality that $a_{1}$ is smooth. Then multiplying equation \eqref{QTN001} by $|a_{1}|^{p-2}a_{1}$ with $p\geq2$, it follows from integration by parts that
\begin{align*}
\begin{cases}
\frac{1}{2}\frac{d}{dt}\|a_{1}\|_{L^{2}}^{2}+\|\nabla a_{1}\|_{L^{2}}^{2}=0,&p=2,\\
\frac{1}{p}\frac{d}{dt}\|a_{1}\|_{L^{p}}^{p}+\frac{4(p-2)}{p^{2}}\|\nabla|a_{1}|^{\frac{p}{2}}\|_{L^{2}}^{2}+\||\nabla a_{1}||a_{1}|^{\frac{p-2}{2}}\|_{L^{2}}^{2}=0,&p>2,
\end{cases}
\end{align*}
which implies that
\begin{align*}
\|a_{1}\|_{L_{t}^{\infty}L_{x}^{p}}+\big\|\nabla|a_{1}|^{\frac{p}{2}}\big\|^{\frac{2}{p}}_{L_{t}^{2}L_{x}^{2}}\leq \|w_{0}\|_{L^{p}}
\begin{cases}
1+\frac{1}{\sqrt{2}},&p=2,\\
1+\sqrt[p]{\frac{p-2}{4p}},&p>2.
\end{cases}
\end{align*}
Applying the Gagliardo-Nirenberg interpolation inequality, we obtain that for $0<\alpha\leq1,$
\begin{align*}
\|a_{1}\|_{L_{t}^{\frac{p}{\alpha}}L_{x}^{\frac{3p}{3-2\alpha}}}\leq C(p,\alpha)\|a_{1}\|_{L_{t}^{\infty}L_{x}^{p}}^{1-\alpha}\big\|\nabla|a_{1}|^{\frac{p}{2}}\big\|^{\frac{2\alpha}{p}}_{L_{t}^{2}L_{x}^{2}}.
\end{align*}
Then we deduce that for $0<\alpha\leq1,$
\begin{align}\label{ZZW001}
\|a_{1}\|_{L^{\infty}_{t}L_{x}^{p}\cap L_{t}^{\frac{p}{\alpha}}L_{x}^{\frac{3p}{3-2\alpha}}}+\big\|\nabla|a_{1}|^{\frac{p}{2}}\big\|^{\frac{2}{p}}_{L_{t}^{2}L_{x}^{2}}\leq C(p,\alpha)\|w_{0}\|_{L^{p}}.
\end{align}

Assume that there exist some constant $C=C(p,\alpha,c,\gamma)>0$ such that \eqref{ZZW001} holds with $a_{1}$ replaced by $a_{k-1}$ for $k\geq3$, that is, for $0<\alpha\leq1$,
\begin{align}\label{ASUM001}
\|a_{k-1}\|_{L^{\infty}_{t}L_{x}^{p}\cap L_{t}^{\frac{p}{\alpha}}L_{x}^{\frac{3p}{3-2\alpha}}}+\big\|\nabla|a_{k-1}|^{\frac{p}{2}}\big\|^{\frac{2}{p}}_{L_{t}^{2}L_{x}^{2}}\leq C\|w_{0}\|_{L^{p}}.
\end{align}
Then we next prove that \eqref{ASUM001} also holds for $a_{k}$. Suppose $a_{k}$ is smooth. Multiplying \eqref{QTN001} by $|a_{k}|^{p-2}a_{k}$ and integrating by parts, we have
\begin{align}\label{ZZW002}
&\frac{1}{p}\frac{d}{dt}\|a_{k}\|_{L^{p}}^{p}+\frac{4(p-2)}{p^{2}}\|\nabla|a_{k}|^{\frac{p}{2}}\|_{L^{2}}^{2}+\||\nabla a_{k}||a_{k}|^{\frac{p-2}{2}}\|_{L^{2}}^{2}\notag\\
&=\int_{\mathbb{R}^{3}}(u^{c,\gamma}\otimes a_{k-1}+a_{k-1}\otimes u^{c,\gamma})\cdot\nabla(|a_{k}|^{p-2}a_{k})+\int_{\mathbb{R}^{3}}\pi_{k-1}\mathrm{div}(|a_{k}|^{p-2}a_{k}).
\end{align}
Applying Corollary \ref{MZLEM001} and Lemma \ref{CORO001}, it follows from H\"{o}lder's inequality and Young's inequality that
\begin{align}\label{ZZW003}
&\left|\int_{\mathbb{R}^{3}}(u^{c,\gamma}\otimes a_{k-1}+a_{k-1}\otimes u^{c,\gamma})\cdot\nabla(|a_{k}|^{p-2}a_{k})\right|\notag\\
&\leq C\int_{\mathbb{R}^{3}}|\nabla|a_{k}|^{\frac{p}{2}}||a_{k}|^{\frac{p}{2}-1}|a_{k-1}||u^{c,\gamma}|\notag\\
&= C\int_{\Omega_{e^{-1}}}|\nabla|a_{k}|^{\frac{p}{2}}|\frac{|a_{k}|^{\frac{p-2}{2}}}{(|x'||x|)^{\frac{p-2}{2p}}}\frac{|a_{k-1}|}{(|x'||x|)^{\frac{1}{p}}}(|x'||x|)^{\frac{1}{2}}|u^{c,\gamma}|\notag\\
&\quad+C\int_{\mathbb{R}^{3}\setminus\Omega_{e^{-1}}}|\nabla|a_{k}|^{\frac{p}{2}}|\frac{|a_{k}|^{\frac{p-2}{2}}}{|x|^{\frac{p-2}{p}}}\frac{|a_{k-1}|}{|x|^{\frac{2}{p}}}|x||u^{c,\gamma}|\notag\\
&\leq CK(c,\gamma)\|\nabla|a_{k}|^{\frac{p}{2}}\|_{L^{2}}\left(\int_{\mathbb{R}^{3}}\frac{|a_{k}|^{p}}{|x'||x|}\right)^{\frac{p-2}{2p}}\left(\int_{\mathbb{R}^{3}}\frac{|a_{k-1}|^{p}}{|x'||x|}\right)^{\frac{1}{p}}\notag\\
&\quad+CK(c,\gamma)\|\nabla|a_{k}|^{\frac{p}{2}}\|_{L^{2}}\left(\int_{\mathbb{R}^{3}}\frac{|a_{k}|^{p}}{|x|^{2}}\right)^{\frac{p-2}{2p}}\left(\int_{\mathbb{R}^{3}}\frac{|a_{k-1}|^{p}}{|x|^{2}}\right)^{\frac{1}{p}}\notag\\
&\leq C_{0}K(c,\gamma)\big\|\nabla|a_{k}|^{\frac{p}{2}}\big\|^{\frac{2(p-1)}{p}}_{L^{2}}\big\|\nabla|a_{k-1}|^{\frac{p}{2}}\big\|^{\frac{2}{p}}_{L^{2}}\notag\\
&\leq\frac{C_{0}(p-1)K(c,\gamma)}{p}\big\|\nabla|a_{k}|^{\frac{p}{2}}\big\|^{2}_{L^{2}}+\frac{C_{0}K(c,\gamma)}{p}\big\|\nabla|a_{k-1}|^{\frac{p}{2}}\big\|^{2}_{L^{2}},
\end{align}
where $\Omega_{e^{-1}}$ is a cone defined by \eqref{CONE999}.

Using Lemma \ref{MZLEM002}, we deduce that $(|x'||x|)^{\frac{p-2}{2}}$ and $|x|^{p-2}$ are all $A_{p}$-weights. Since $(-\Delta)^{-1}\partial_{i}\partial_{j}$ is a Calder\'{o}n-Zygmund operator, it then follows from the boundedness of the Riesz transforms on weighted $L^{p}$ spaces (see Theorem 9.4.6 in \cite{G2009}), H\"{o}lder's inequality, Young's inequality, Corollary \ref{MZLEM001} and Lemma \ref{CORO001} that

$(i)$ if $p=2$, then
\begin{align}\label{ZZW005}
\int_{\mathbb{R}^{3}}\pi_{k-1}\mathrm{div}a_{k}=0;
\end{align}

$(ii)$ if $p>2$, then
\begin{align}\label{ZZW006}
&\int_{\mathbb{R}^{3}}\pi_{k-1}\mathrm{div}(|a_{k}|^{p-2}a_{k})\leq C\int_{\mathbb{R}^{3}}|\pi_{k-1}||\nabla|a_{k}|^{\frac{p}{2}}||a_{k}|^{\frac{p}{2}-1}\notag\\
&\leq C\big\|(|x'||x|)^{\frac{p-2}{2p}}|a_{k-1}\otimes u^{c,\gamma}|\big\|_{L^{p}(\Omega_{e^{-1}})}\|\nabla|a_{k}|^{\frac{p}{2}}\|_{L^{2}}\bigg\|\frac{|a_{k}|^{\frac{p-2}{2}}}{(|x'||x|)^{\frac{p-2}{2p}}}\bigg\|_{L^{\frac{2p}{p-2}}}\notag\\
&\quad+C\big\||x|^{\frac{p-2}{p}}|a_{k-1}\otimes u^{c,\gamma}|\big\|_{L^{p}(\mathbb{R}^{3}\setminus\Omega_{e^{-1}})}\|\nabla|a_{k}|^{\frac{p}{2}}\|_{L^{2}}\bigg\|\frac{|a_{k}|^{\frac{p-2}{2}}}{|x|^{\frac{p-2}{p}}}\bigg\|_{L^{\frac{2p}{p-2}}}\notag\\
&\leq C_{0}K(c,\gamma)\big\|\nabla|a_{k-1}|^{\frac{p}{2}}\big\|^{\frac{2}{p}}_{L^{p}}\big\|\nabla|a_{k}|^{\frac{p}{2}}\big\|_{L^{2}}^{\frac{2(p-1)}{p}}\notag\\
&\leq\frac{C_{0}(p-1)K(c,\gamma)}{p}\big\|\nabla|a_{k}|^{\frac{p}{2}}\big\|_{L^{2}}^{2}+\frac{C_{0}K(c,\gamma)}{p}\|\nabla|a_{k-1}|^{\frac{p}{2}}\big\|^{2}_{L^{p}}.
\end{align}
Then substituting \eqref{ZZW003}--\eqref{ZZW006} into \eqref{ZZW002}, we obtain that there exists a small constant $\delta_{p}>0$ such that if $|(c,\gamma)|<\delta_{p}$, then
\begin{align}\label{ZZW007}
C_{0}K(c,\gamma)<
\begin{cases}
1,&p=2,\\
\min\Big\{\frac{1}{2},\frac{2(p-2)}{p^{2}}\Big\},&p>2,
\end{cases}
\end{align}
and thus,
\begin{align}\label{ZZW008}
&\frac{d}{dt}\|a_{k}(t)\|_{L^{p}}^{p}+\|\nabla|a_{k}|^{\frac{p}{2}}\|^{2}_{L^{2}}\leq\tau_{p} \|\nabla|a_{k-1}|^{\frac{p}{2}}\|^{2}_{L^{2}},
\end{align}
where
\begin{align}\label{MCQ009}
\tau_{p}:=
\begin{cases}
C_{0}K(c,\gamma),&p=2,\\
\frac{C_{0}K(c,\gamma)}{\min\{\frac{1}{2},\frac{2(p-2)}{p}-C_{0}(p-1)K(c,\gamma)\}},&p>2.
\end{cases}
\end{align}
Therefore, integrating \eqref{ZZW008} from $0$ to $T$ with any $T>0$, it follows from \eqref{ASUM001} that
\begin{align*}
\|a_{k}(T)\|_{L^{p}}^{p}+\|\nabla|a_{k}|^{\frac{p}{2}}\|_{L_{T}^{2}L_{x}^{2}}^{2}\leq&\tau_{p}\|\nabla|a_{k-1}|^{\frac{p}{2}}\|_{L_{T}^{2}L_{x}^{2}}^{2}+\|w_{0}\|_{L^{p}}^{p}\leq(1+C\tau_{p})\|w_{0}\|_{L^{p}}^{p},\notag\\
\end{align*}
which, together with the interpolation inequality, shows that for $0<\alpha\leq1$,
\begin{align}\label{ZWZ90}
\|a_{k}\|_{L^{\infty}_{t}L_{x}^{p}\cap L_{t}^{\frac{p}{\alpha}}L_{x}^{\frac{3p}{3-2\alpha}}}+\big\|\nabla|a_{k}|^{\frac{p}{2}}\big\|^{\frac{2}{p}}_{L_{t}^{2}L_{x}^{2}}\leq C(p,\alpha,c,\gamma)\|w_{0}\|_{L^{p}}.
\end{align}

{\bf Step 2.} We now prove that $\{a_{k}\}$ is a Cauchy sequence in $L_{t}^{\infty}L_{x}^{p}\cap L_{t}^{\frac{p}{\alpha}}L_{x}^{\frac{3p}{3-2\alpha}}$ for any $0<\alpha\leq1$. For $k\geq1$, $a_{k+1}-a_{k}$ satisfies
\begin{align}\label{CAU001}
\begin{cases}
\partial_{t}(a_{k+1}-a_{k})-\Delta (a_{k+1}-a_{k})\\
=-\mathrm{div}(u^{c,\gamma}\otimes (a_{k}-a_{k-1})+(a_{k}-a_{k-1})\otimes u^{c,\gamma})-\nabla(\pi_{k}-\pi_{k-1}),\\
\mathrm{div}(a_{k+1}-a_{k})=0,\\
\pi_{k}-\pi_{k-1}=(-\Delta)^{-1}\partial_{i}\partial_{j}(u^{c,\gamma}\otimes(a_{k}-a_{k-1})+(a_{k}-a_{k-1})\otimes u^{c,\gamma}),\\
(a_{k+1}-a_{k})(x,0)=0.
\end{cases}
\end{align}
Without loss of generality, suppose that $\{a_{k}\}$ is a smooth sequence. Multiplying the above equation by $|a_{k+1}-a_{k}|^{p-2}(a_{k+1}-a_{k})$ and integrating by parts, we have
\begin{align*}
&\frac{1}{p}\frac{d}{dt}\|a_{k+1}-a_{k}\|_{L^{p}}^{p}+\frac{4(p-2)}{p^{2}}\|\nabla|a_{k+1}-a_{k}|^{\frac{p}{2}}\|_{L^{2}}^{2}+\||\nabla(a_{k+1}-a_{k})||a_{k+1}-a_{k}|^{\frac{p-2}{2}}\|_{L^{2}}^{2}\notag\\
&=\int_{\mathbb{R}^{3}}(u^{c,\gamma}\otimes (a_{k}-a_{k-1})+(a_{k}-a_{k-1})\otimes u^{c,\gamma})\cdot\nabla(|a_{k+1}-a_{k}|^{p-2}(a_{k+1}-a_{k}))\notag\\
&\quad+\int_{\mathbb{R}^{3}}(\pi_{k}-\pi_{k-1})\mathrm{div}(|a_{k+1}-a_{k}|^{p-2}(a_{k+1}-a_{k})).
\end{align*}
By the same argument as in \eqref{ZZW008}, we have
\begin{align*}
&\frac{d}{dt}\|(a_{k+1}-a_{k})(t)\|_{L^{p}}^{p}+\|\nabla|a_{k+1}-a_{k}|^{\frac{p}{2}}\|^{2}_{L^{2}}\leq\tau_{p} \|\nabla|a_{k}-a_{k-1}|^{\frac{p}{2}}\|^{2}_{L^{2}},
\end{align*}
where $\tau_{p}$ is given by \eqref{MCQ009}. In light of \eqref{ZZW007}, we know that $\tau_{p}<1$. Therefore, we derive
\begin{align*}
&\|a_{k+1}-a_{k}\|_{L_{t}^{\infty}L_{x}^{p}}+\|\nabla|a_{k+1}-a_{k}|^{\frac{p}{2}}\|^{\frac{2}{p}}_{L^{2}_{t}L^{2}_{x}}\notag\\
&\leq\sqrt[p]{\tau_{p}}\big( \|a_{k}-a_{k-1}\|_{L_{t}^{\infty}L_{x}^{p}}+\|\nabla|a_{k}-a_{k-1}|^{\frac{p}{2}}\|^{\frac{2}{p}}_{L^{2}_{t}L^{2}_{x}}\big),
\end{align*}
which implies that $\{a_{k}\}$ is a Cauchy sequence under this norm. Then there exists a limit $a\in L_{t}^{\infty}L_{x}^{p}$ satisfying $\nabla|a|^{\frac{p}{2}}\in L_{t}^{2}L_{x}^{2}$ such that
\begin{align*}
\lim\limits_{k\rightarrow\infty}\|a_{k}-a\|_{L_{t}^{\infty}L_{x}^{p}}+\|\nabla|a_{k}-a|^{\frac{p}{2}}\|^{\frac{2}{p}}_{L^{2}_{t}L^{2}_{x}}=0.
\end{align*}
This, in combination with the Gagliardo-Nirenberg interpolation inequality, leads to that for $0<\alpha\leq1,$
\begin{align*}
\lim\limits_{k\rightarrow\infty}\|a_{k}-a\|_{L_{t}^{\infty}L_{x}^{p}\cap L_{t}^{\frac{p}{\alpha}}L_{x}^{\frac{3p}{3-2\alpha}}}+\|\nabla|a_{k}-a|^{\frac{p}{2}}\|^{\frac{2}{p}}_{L^{2}_{t}L^{2}_{x}}=0.
\end{align*}
Then sending $k\rightarrow\infty$ in \eqref{ZWZ90}, we deduce
\begin{align*}
\|a\|_{L^{\infty}_{t}L_{x}^{p}\cap L_{t}^{\frac{p}{\alpha}}L_{x}^{\frac{3p}{3-2\alpha}}}+\big\|\nabla|a|^{\frac{p}{2}}\big\|^{\frac{2}{p}}_{L_{t}^{2}L_{x}^{2}}\leq C(p,\alpha,c,\gamma)\|w_{0}\|_{L^{p}}.
\end{align*}

{\bf Step 3.} It remains to show that $a\in C([0,\infty);L_{x}^{p})$. Due to the translational invariance in time, it suffices to demonstrate the continuity of $a$ near $t=0.$ For this purpose, it only needs to show that for any sequence $t_{i}\rightarrow0,$ as $i\rightarrow\infty,$ there holds
\begin{align}\label{SEQ001}
\lim\limits_{i\rightarrow\infty}\|a(\cdot,t_{i})-w_{0}\|_{L^{p}}=0.
\end{align}
From \eqref{ESTI001}, we know
\begin{align}\label{BOUN001}
\|a(\cdot,t_{i})\|_{L^{p}}\leq\|w_{0}\|_{L^{p}},\quad\text{for each $i\geq1$}.
\end{align}
According to the weak compactness of reflexive Banach space, we derive that there exists a subsequence $\{t_{i_{j}}\}$ such that
\begin{align*}
a(\cdot,t_{i_{j}})\rightharpoonup w_{0}\quad\text{weakly in $L^{p}$, \;as $j\rightarrow\infty$}.
\end{align*}
This yields that
\begin{align*}
\|w_{0}\|_{L^{p}}\leq\liminf\limits_{j\rightarrow\infty}\|a(\cdot,t_{i_{j}})\|_{L^{p}}.
\end{align*}
On the other hand, using the boundedness of $\{\|a(\cdot,t_{i})\|_{L^{p}}\}$ in \eqref{BOUN001}, we have
\begin{align*}
\limsup\limits_{j\rightarrow\infty}\|a(\cdot,t_{i_{j}})\|_{L^{p}}\leq\|w_{0}\|_{L^{p}}.
\end{align*}
Then we obtain
\begin{align*}
\lim\limits_{j\rightarrow\infty}\|a(\cdot,t_{i_{j}})\|_{L^{p}}=\|w_{0}\|_{L^{p}}.
\end{align*}
Since $\|a(\cdot,t_{i})\|_{L^{p}}$ is decreasing as $i$ increases, then we further deduce that \eqref{SEQ001} also holds. The proof is complete.

\end{proof}

For any $w_{1},w_{2}\in L^{\frac{4p}{3}}([0,T];L^{2p}_{x}(\mathbb{R}^{3}))$ with $T>0$, we prepare to estimate the nonlinear part $N(w_{1},w_{2})$ as follows:
\begin{align*}
N(w_{1},w_{2})=-\int^{t}_{0}e^{-(t-s)\mathcal{L}}\mathbb{P}\mathrm{div}(w_{1}\otimes w_{2})ds.
\end{align*}
For simplicity, write $z:=N(w_{1},w_{2})$. Then $z$ satisfies the following equation
\begin{align}\label{NONLINE002}
\begin{cases}
\partial_{t}z-\Delta z+(z\cdot\nabla)u^{c,\gamma}+(u^{c,\gamma}\cdot\nabla)z+\nabla\pi_{2}=-\mathrm{div}(w_{1}\otimes w_{2}),\\
\mathrm{div}z=0,\\
z(x,0)=0.
\end{cases}
\end{align}
Namely, for any $\varphi\in C^{\infty}_{c}([0,T)\times\mathbb{R}^{3})$ with $\mathrm{div}\varphi=0,$
\begin{align*}
\int^{T}_{0}\int_{\mathbb{R}^{3}}\left(z(\partial_{t}\varphi+\Delta\varphi+u^{c,\gamma}\cdot\nabla\varphi)+(w_{1}\otimes w_{2})\cdot\nabla\varphi-(z\cdot\nabla)u^{c,\gamma}\varphi\right)dxdt=0.
\end{align*}

\begin{lemma}\label{Lem02}
Let $p\geq3$ and $T>0$. For any $w_{1},w_{2}\in L^{\frac{4p}{3}}([0,T];L^{2p}(\mathbb{R}^{3}))$, there exists a small constant $\delta_{p}>0$ such that if $(c,\gamma)\in M\cap\{|(c,\gamma)|\leq\delta_{p}\}$, then equations \eqref{NONLINE002} has a unique solution $z\in C_{T}L_{x}^{p}\cap L_{T}^{\frac{p}{\alpha}}L_{x}^{\frac{3p}{3-2\alpha}}$ for any $0<\alpha\leq1$. Moreover, this solution satisfies that for any $0<\alpha\leq1,$
\begin{align*}
\|z\|_{C_{T}L_{x}^{p}\cap L_{T}^{\frac{p}{\alpha}}L_{x}^{\frac{3p}{3-2\alpha}}}+\big\|\nabla|z|^{\frac{p}{2}}\big\|^{\frac{2}{p}}_{L_{T}^{2}L_{x}^{2}}\leq C(p,\alpha,c,\gamma)
\begin{cases}
\|w_{1}\|_{L_{T}^{\frac{4p}{2p-3}}L_{x}^{2p}}\|w_{2}\|_{L_{T}^{\frac{4p}{3}}L_{x}^{2p}},\\
T^{\frac{p-3}{2p}}\|w_{1}\|_{L_{T}^{\frac{4p}{3}}L_{x}^{2p}}\|w_{2}\|_{L_{T}^{\frac{4p}{3}}L_{x}^{2p}}.
\end{cases}
\end{align*}

\end{lemma}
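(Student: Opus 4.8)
The plan is to mirror the scheme used for Lemma \ref{Lem01}: first construct $z$ as the strong limit of a Picard iteration, then close the energy estimates uniformly in the iteration index using Corollary \ref{MZLEM001}, Lemma \ref{CORO001} and Lemma \ref{MZLEM002}, and finally recover the time-continuity and the quantitative bound. Concretely, I would set $z_0 = 0$ and define $\{z_k\}$ by
\begin{align*}
\begin{cases}
\partial_{t}z_{k}-\Delta z_{k}=-\mathrm{div}(u^{c,\gamma}\otimes z_{k-1}+z_{k-1}\otimes u^{c,\gamma})-\nabla\pi_{k-1}-\mathrm{div}(w_{1}\otimes w_{2}),\\
\mathrm{div}\,z_{k}=0,\quad z_{k}(x,0)=0,
\end{cases}
\end{align*}
with $\pi_{k-1}$ the associated pressure as in \eqref{QTN001}. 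Testing with $|z_k|^{p-2}z_k$ and integrating by parts yields, exactly as in \eqref{ZZW002}--\eqref{ZZW008}, the two linear-in-$u^{c,\gamma}$ terms (controlled by $\tau_p\|\nabla|z_{k-1}|^{p/2}\|_{L^2}^2$ after choosing $|(c,\gamma)|\le\delta_p$ so that \eqref{ZZW007} holds), plus the genuinely new forcing term $\int_{\mathbb{R}^3}(w_1\otimes w_2)\cdot\nabla(|z_k|^{p-2}z_k)$.

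The core new estimate is the forcing term. I would bound it by $C\int_{\mathbb{R}^3}|\nabla|z_k|^{p/2}|\,|z_k|^{p/2-1}|w_1||w_2|$, and then split off $|z_k|^{p/2-1}$ with H\"older: writing $|z_k|^{p/2-1} = (|z_k|^{p/2})^{(p-2)/p}$ one has $\||z_k|^{p/2-1}\|_{L^{2p/(p-2)}} = \||z_k|^{p/2}\|_{L^{2p/(p-2)}}^{(p-2)/p} \le C\|\nabla|z_k|^{p/2}\|_{L^2}^{(p-2)/p}$ by Sobolev in $\mathbb{R}^3$ (valid since $p\ge 3$ forces $2p/(p-2)\le 6$). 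This leaves $\|w_1 w_2\|_{L^{p}}\le \|w_1\|_{L^{2p}}\|w_2\|_{L^{2p}}$, so the forcing term is $\le \tfrac14\|\nabla|z_k|^{p/2}\|_{L^2}^2 + C\|w_1\|_{L^{2p}}^2\|w_2\|_{L^{2p}}^2$ by Young. Integrating in time against the dissipation $\|\nabla|z_k|^{p/2}\|_{L^2}^2$ and using that $2\cdot\frac{2p}{2p-3}$ and $2\cdot\frac{4p}{3}$ are the conjugate-exponent pairs that make $\int_0^T\|w_1\|_{L^{2p}}^2\|w_2\|_{L^{2p}}^2\,ds$ reduce (via H\"older in time, or via H\"older in time plus the $T^{(p-3)/2p}$ factor when both $w_i$ sit in $L_T^{4p/3}L_x^{2p}$) to the two right-hand sides claimed, gives the uniform bound
\begin{align*}
\|z_k\|_{L_T^\infty L_x^p} + \big\|\nabla|z_k|^{p/2}\big\|_{L_T^2 L_x^2}^{2/p} \le C(p,c,\gamma)\,\mathcal{N}(w_1,w_2),
\end{align*}
where $\mathcal{N}$ is either of the two products above; the Gagliardo--Nirenberg interpolation inequality (as in the passage to \eqref{ZWZ90}) then upgrades this to the full $L_T^\infty L_x^p\cap L_T^{p/\alpha}L_x^{3p/(3-2\alpha)}$ norm for every $0<\alpha\le1$.

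For convergence I would estimate $z_{k+1}-z_k$, which solves the same equation with forcing replaced by $-\mathrm{div}(u^{c,\gamma}\otimes(z_k-z_{k-1}) + (z_k - z_{k-1})\otimes u^{c,\gamma})$ and zero external forcing (the $w_1\otimes w_2$ term cancels); this is identical to the Cauchy-sequence step of Lemma \ref{Lem01}, giving $\|z_{k+1}-z_k\| \le \sqrt[p]{\tau_p}\,\|z_k - z_{k-1}\|$ with $\tau_p<1$, hence a limit $z$ in $C_TL_x^p\cap L_T^{p/\alpha}L_x^{3p/(3-2\alpha)}$ with $\nabla|z|^{p/2}\in L_T^2L_x^2$ inheriting the bound above after $k\to\infty$. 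Time-continuity near $t=0$ follows as in Step 3 of Lemma \ref{Lem01} from $z(0)=0$ and the monotonicity/weak-compactness argument, and uniqueness follows by testing the difference of two solutions exactly as in the Cauchy step (the difference solves a homogeneous problem). The main obstacle is bookkeeping the time-integrability exponents so that the forcing term lands precisely on $\|w_1\|_{L_T^{4p/(2p-3)}L_x^{2p}}\|w_2\|_{L_T^{4p/3}L_x^{2p}}$ (resp. the version with the $T^{(p-3)/2p}$ prefactor): one must track that testing produces $\int_0^T \|w_1 w_2\|_{L^p}^2\,ds$, apply H\"older in time with exponents making $\frac{2p}{2p-3}$ and $\frac{4p}{3}$ appear, and note that $\frac{2p-3}{2p} = \frac34 + \frac{p-3}{2p}$ so that bounding $\|w_1\|_{L_T^{4p/(2p-3)}}$ by $T^{(p-3)/2p}\|w_1\|_{L_T^{4p/3}}$ yields the second alternative; everything else is a routine repetition of the already-established machinery.
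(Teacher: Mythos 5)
Your overall scheme -- Picard iteration, testing with $|z_k|^{p-2}z_k$, reusing the $u^{c,\gamma}$-machinery of Lemma \ref{Lem01}, cancelling $w_1\otimes w_2$ in the Cauchy-sequence step, and producing the two alternative right-hand sides by H\"older in time on $\|w_1\otimes w_2\|_{L_T^2L_x^p}$ -- is exactly the paper's. However, your one genuinely new estimate, the forcing term, contains an error that breaks the exponent bookkeeping. You assert
\begin{align*}
\big\||z_k|^{\frac{p}{2}-1}\big\|_{L^{\frac{2p}{p-2}}}=\big\||z_k|^{\frac{p}{2}}\big\|_{L^{\frac{2p}{p-2}}}^{\frac{p-2}{p}}\leq C\big\|\nabla|z_k|^{\frac{p}{2}}\big\|_{L^{2}}^{\frac{p-2}{p}}.
\end{align*}
The first equality is false: setting $f=|z_k|^{p/2}$, one has $\|f^{\frac{p-2}{p}}\|_{L^{2p/(p-2)}}=\|f\|_{L^{2}}^{\frac{p-2}{p}}=\|z_k\|_{L^{p}}^{\frac{p-2}{2}}$, i.e.\ the $L^2$ norm of $f$ appears, not its $L^{2p/(p-2)}$ norm, and this quantity is \emph{not} controlled by $\|\nabla f\|_{L^2}$ alone (nor would $\dot H^1\hookrightarrow L^{2p/(p-2)}$ hold on $\mathbb{R}^3$ for $p>3$, since $2p/(p-2)<6$). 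Consequently your claimed bound ``forcing $\leq\frac14\|\nabla|z_k|^{p/2}\|_{L^2}^2+C\|w_1\|_{L^{2p}}^2\|w_2\|_{L^{2p}}^2$'' does not follow: even granting your (incorrect) chain, the gradient factor carries the power $\frac{2(p-1)}{p}$, so Young's inequality would put the power $p$, not $2$, on $\|w_1\otimes w_2\|_{L^p}$, which is incompatible with the $L_T^2L_x^p$ norm you need at the end.

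The repair is what the paper does: keep $\||z_k|^{\frac{p}{2}-1}\|_{L^{2p/(p-2)}}=\|z_k\|_{L^p}^{\frac{p-2}{2}}$, so the forcing term is bounded by $\epsilon\|\nabla|z_k|^{\frac{p}{2}}\|_{L^2}^2+C(p)\|z_k\|_{L^p}^{p-2}\|w_1\otimes w_2\|_{L^p}^2$; after integrating in time this yields $C\|z_k\|_{L_T^\infty L_x^p}^{p-2}\|w_1\otimes w_2\|_{L_T^2L_x^p}^2$, which is absorbed by a second application of Young's inequality against $\frac12\|z_k\|_{L_T^\infty L_x^p}^{p}$, giving $\|z_k\|_{L_T^\infty L_x^p}\leq C\|w_1\otimes w_2\|_{L_T^2L_x^p}$ and then the dissipation bound. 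You also gloss over the pressure contribution of $w_1\otimes w_2$: for $p>2$ the term $\int_{\mathbb{R}^3}(-\Delta)^{-1}\partial_i\partial_j(w_1\otimes w_2)\,\mathrm{div}(|z_k|^{p-2}z_k)$ does not vanish and must be estimated separately via the $L^p$-boundedness of the Riesz transforms (the paper's \eqref{QMYU001}); it is handled the same way as the direct forcing term, so this is a smaller omission. With these two corrections the rest of your argument (Cauchy sequence, continuity at $t=0$, uniqueness, and the final H\"older in time producing the $T^{\frac{p-3}{2p}}$ alternative) goes through as written.
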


\begin{proof}
Similar to Lemma \ref{Lem01}, we will use the classical Picard iteration scheme to solve the existence of $z$. Pick $z_{0}=0$. For any given $w_{1},w_{2}\in L^{\frac{4p}{3}}([0,T];L^{2p}(\mathbb{R}^{3}))$, we construct an iterative sequence $\{z_{k}\}$ verifying the following equations: for $k\geq1$,
\begin{align}\label{QWMA01}
\begin{cases}
\partial_{t}z_{k}-\Delta z_{k}=-\mathrm{div}(u^{c,\gamma}\otimes z_{k-1}+z_{k-1}\otimes u^{c,\gamma}+w_{1}\otimes w_{2})-\nabla\pi_{k-1},\\
\mathrm{div}z_{k}=0,\\
\pi_{k-1}=(-\Delta)^{-1}\partial_{i}\partial_{j}(u^{c,\gamma}\otimes z_{k-1}+z_{k-1}\otimes u^{c,\gamma}+w_{1}\otimes w_{2}),\\
z_{k}(x,0)=0.
\end{cases}
\end{align}

{\bf Step 1.} Claim that for $k\geq1$, $z_{k}\in L^{\infty}([0,T];L^{p}(\mathbb{R}^{3}))\cap L^{\frac{p}{\alpha}}([0,T];L^{\frac{3p}{3-2\alpha}}(\mathbb{R}^{3}))$ with any $0<\alpha\leq1$, and $\nabla|z_{k}|^{\frac{p}{2}}\in L^{2}([0,T];L^{2}(\mathbb{R}^{3}))$. For $k=1,$ suppose that $z_{1}$ is smooth. Multiplying equation \eqref{QWMA01} by $|z_{1}|^{p-2}z_{1}$ with $p\geq3$, we deduce from integration by parts that
\begin{align*}
&\frac{1}{p}\frac{d}{dt}\|z_{1}\|_{L^{p}}^{p}+\frac{4(p-2)}{p^{2}}\|\nabla|z_{1}|^{\frac{p}{2}}\|_{L^{2}}^{2}+\||\nabla z_{1}||a_{1}|^{\frac{p-2}{2}}\|_{L^{2}}^{2}\notag\\
&=\int_{\mathbb{R}^{3}}(w_{1}\otimes w_{2})\cdot\nabla(|z_{1}|^{p-2}z_{1})+\int_{\mathbb{R}^{3}}(-\Delta)^{-1}\partial_{i}\partial_{j}(w_{1}\otimes w_{2})\mathrm{div}(|z_{1}|^{p-2}z_{1}):=I_{1}+I_{2}.
\end{align*}
As for the first term $I_{1}$, from H\"{o}lder's inequality and Young's inequality, we deduce
\begin{align}\label{ZEW001}
|I_{1}|\leq&C\int_{\mathbb{R}^{3}}|\nabla|z_{1}|^{\frac{p}{2}}||z_{1}|^{\frac{p}{2}-1}|w_{1}\otimes w_{2}|\notag\\
\leq&C\|\nabla|z_{1}|^{\frac{p}{2}}\|_{L^{2}}\|z_{1}\|_{L^{p}}^{\frac{p-2}{2}}\|w_{1}\otimes w_{2}\|_{L^{p}}\notag\\
\leq&\frac{4(p-2)}{3p^{2}}\|\nabla|z_{1}|^{\frac{p}{2}}\|_{L^{2}}^{2}+C(p)\|z_{1}\|_{L^{p}}^{p-2}\|w_{1}\otimes w_{2}\|_{L^{p}}^{2}.
\end{align}
Observe from the property of scalar Riesz operator (see Theorem 1.1 in \cite{IM1996}) that
\begin{align}\label{QMYU001}
\|(-\Delta)^{-1}\partial_{i}\partial_{j}(w_{1}\otimes w_{1})\|_{L^{p}}\leq&
\overline{C}_{0}H_{p}\|w_{1}\otimes w_{2}\|_{L^{p}},
\end{align}
where $\overline{C}_{0}$ is a universal constant independent of $p$ and $H_{p}=\cot(\pi/2p)$. Then it follows from H\"{o}lder's inequality and Young's inequality again that
\begin{align}\label{ZEW002}
|I_{2}|\leq&\frac{2(p-2)}{p}\int_{\mathbb{R}^{3}}|(-\Delta)^{-1}\partial_{i}\partial_{j}(w_{1}\otimes w_{2})||\nabla|z_{1}|^{\frac{p}{2}}||z_{1}|^{\frac{p-2}{2}}\notag\\
\leq&\frac{2(p-2)}{p}\overline{C}_{0}H_{p}\|\nabla|z_{1}|^{\frac{p}{2}}\|_{L^{2}}\|z_{1}\|^{\frac{p-2}{2}}_{L^{p}}\|w_{1}\otimes w_{2}\|_{L^{p}}\notag\\
\leq&\frac{4(p-2)}{3p^{2}}\|\nabla|z_{1}|^{\frac{p}{2}}\|_{L^{2}}^{2}+C(p)\|z_{1}\|_{L^{p}}^{p-2}\|w_{1}\otimes w_{2}\|_{L^{p}}^{2},
\end{align}
Combining the above results, we deduce that
\begin{align}\label{ZMA001}
\frac{d}{dt}\|z_{1}\|_{L^{p}}^{p}+\frac{4(p-2)}{3p}\|\nabla|z_{1}|^{\frac{p}{2}}\|_{L^{2}}^{2}\leq C(p)\|z_{1}\|_{L^{p}}^{p-2}\|w_{1}\otimes w_{2}\|_{L^{p}}^{2}.
\end{align}
Integrating it from $0$ to $s$ with any $0<s\leq T$, we obtain from Young's inequality that
\begin{align*}
\|z_{1}(s)\|_{L^{p}}^{p}\leq& C(p)\|z_{1}\|_{L^{\infty}_{T}L_{x}^{p}}\|w_{1}\otimes w_{2}\|^{2}_{L_{t}^{2}L_{x}^{p}}\notag\\
\leq&\frac{1}{2}\|z_{1}\|_{L^{\infty}_{T}L_{x}^{p}}^{p}+C(p)\|w_{1}\otimes w_{2}\|^{p}_{L_{t}^{2}L_{x}^{p}},
\end{align*}
which yields that
\begin{align}\label{ZQMAM001}
\|z_{1}\|_{L^{\infty}_{T}L_{x}^{p}}\leq C(p)\|w_{1}\otimes w_{2}\|_{L_{t}^{2}L_{x}^{p}}.
\end{align}
Substituting \eqref{ZQMAM001} into \eqref{ZMA001} and integrating it from $0$ to $T$, we obtain that
\begin{align*}
\|\nabla|z_{1}|^{\frac{p}{2}}\|_{L_{T}^{2}L^{2}}^{\frac{2}{p}}\leq C(p)\|z_{1}\|_{L_{T}^{\infty}L_{x}^{p}}^{\frac{p-2}{p}}\|w_{1}\otimes w_{2}\|_{L_{T}^{2}L_{x}^{p}}^{\frac{2}{p}}\leq C(p)\|w_{1}\otimes w_{2}\|_{L_{T}^{2}L_{x}^{p}}.
\end{align*}
This, together with \eqref{ZQMAM001} and the interpolation inequality, gives that for $0<\alpha\leq1$,
\begin{align}\label{ZWMK001}
\|z_{1}\|_{L^{\infty}_{T}L_{x}^{p}\cap L_{T}^{\frac{p}{\alpha}}L_{x}^{\frac{3p}{3-2\alpha}}}+\big\|\nabla|z_{1}|^{\frac{p}{2}}\big\|^{\frac{2}{p}}_{L_{T}^{2}L_{x}^{2}}\leq& C(p,\alpha)\|w_{1}\otimes w_{2}\|_{L_{T}^{2}L_{x}^{p}}.
\end{align}

Suppose that there exist some constant $C=C(p,\alpha,c,\gamma)>0$ such that \eqref{ZWMK001} holds with $z_{1}$ replaced by $z_{k-1}$ for $k\geq3$, namely, for $0<\alpha\leq1$,
\begin{align}\label{DQ001}
\|z_{k-1}\|_{L^{\infty}_{T}L_{x}^{p}\cap L_{T}^{\frac{p}{\alpha}}L_{x}^{\frac{3p}{3-2\alpha}}}+\big\|\nabla|z_{k-1}|^{\frac{p}{2}}\big\|^{\frac{2}{p}}_{L_{T}^{2}L_{x}^{2}}\leq C\|w_{1}\otimes w_{2}\|_{L_{T}^{2}L_{x}^{p}}.
\end{align}
Then we need to demonstrate that \eqref{DQ001} also holds for $z_{k}$. Assume that $z_{k}$ is smooth. Multiplying \eqref{QWMA01} by $|z_{k}|^{p-2}z_{k}$, we have from integration by parts that
\begin{align}\label{ZEMAA66}
&\frac{1}{p}\frac{d}{dt}\|z_{k}\|_{L^{p}}^{p}+\frac{4(p-2)}{p^{2}}\|\nabla|z_{k}|^{\frac{p}{2}}\|_{L^{2}}^{2}+\||\nabla z_{k}||a_{1}|^{\frac{p-2}{2}}\|_{L^{2}}^{2}\notag\\
&=\int_{\mathbb{R}^{3}}(w_{1}\otimes w_{2})\cdot\nabla(|z_{k}|^{p-2}z_{k})+\int_{\mathbb{R}^{3}}(-\Delta)^{-1}\partial_{i}\partial_{j}(w_{1}\otimes w_{2})\mathrm{div}(|z_{k}|^{p-2}z_{k})\notag\\
&\quad+\int_{\mathbb{R}^{3}}(u^{c,\gamma}\otimes z_{k-1}+z_{k-1}\otimes u^{c,\gamma})\cdot\nabla(|z_{k}|^{p-2}z_{k})\notag\\
&\quad+\int_{\mathbb{R}^{3}}(-\Delta)^{-1}\partial_{i}\partial_{j}(u^{c,\gamma}\otimes z_{k-1}+z_{k-1}\otimes u^{c,\gamma})\mathrm{div}(|z_{k}|^{p-2}z_{k})\notag\\
&:=I_{1}+I_{2}+I_{3}+I_{4}.
\end{align}
In exactly the same way to \eqref{ZEW001} and \eqref{ZEW002}, we have
\begin{align}\label{ZEMA01}
|I_{1}+I_{2}|\leq&\frac{8(p-2)}{3p^{2}}\|\nabla|z_{k}|^{\frac{p}{2}}\|_{L^{2}}^{2}+C(p)\|z_{k}\|_{L^{p}}^{p-2}\|w_{1}\otimes w_{2}\|_{L^{p}}^{2}.
\end{align}
With regard to the terms $I_{3}$ and $I_{4}$, by the same arguments as in \eqref{ZZW003}--\eqref{ZZW006}, it follows from \eqref{DQ001} that
\begin{align}\label{ZEMA09}
|I_{3}+I_{4}|\leq&\frac{2C_{0}(p-1)K(c,\gamma)}{p}\big\|\nabla|z_{k}|^{\frac{p}{2}}\big\|^{2}_{L^{2}}+\frac{2C_{0}K(c,\gamma)}{p}\big\|\nabla|z_{k-1}|^{\frac{p}{2}}\big\|^{2}_{L^{2}}.
\end{align}
Pick a small constant $\bar{\delta}_{p}>0$ such that if $|(c,\gamma)|<\bar{\delta}_{p}$, there holds
\begin{align*}
C_{0}K(c,\gamma)<\min\Big\{\frac{1}{2},\frac{2(p-2)}{3p^{2}}\Big\}.
\end{align*}
Then inserting \eqref{ZEMA01} and \eqref{ZEMA09} into \eqref{ZEMAA66}, we deduce
\begin{align}\label{ZQMP001}
&\frac{d}{dt}\|z_{k}(t)\|_{L^{p}}^{p}+\|\nabla|z_{k}|^{\frac{p}{2}}\|^{2}_{L^{2}}\leq\bar{\tau}_{p} \|\nabla|z_{k-1}|^{\frac{p}{2}}\|^{2}_{L^{2}}+C\|z_{k}\|_{L^{p}}^{p-2}\|w_{1}\otimes w_{2}\|_{L^{p}}^{2},
\end{align}
where
\begin{align}\label{ZQMP002}
\bar{\tau}_{p}:=
\frac{C_{0}K(c,\gamma)}{\min\{\frac{1}{2},\frac{2(p-2)}{3p}-C_{0}(p-1)K(c,\gamma)\}}<1.
\end{align}
Consequently, integrating \eqref{ZQMP001} from $0$ to $t$ with $0<t\leq T$, we have from \eqref{DQ001} and Young's inequality that
\begin{align*}
&\|z_{k}(t)\|_{L^{p}}^{p}+\|\nabla|z_{k}|^{\frac{p}{2}}\|_{L_{t}^{2}L_{x}^{2}}^{2}\notag\\
&\leq\bar{\tau}_{p}\|\nabla|z_{k-1}|^{\frac{p}{2}}\|_{L_{T}^{2}L_{x}^{2}}^{2}+C\|z_{k}\|_{L^{p}}^{p-2}\|w_{1}\otimes w_{2}\|_{L^{p}}^{2}\notag\\
&\leq\frac{1}{2}\|z_{k}\|_{L^{\infty}_{T}L_{x}^{p}}^{p}+C(1+\bar{\tau}_{p})\|w_{1}\otimes w_{2}\|^{p}_{L_{t}^{2}L_{x}^{p}},
\end{align*}
which, in combination with the interpolation inequality, leads to that for $0<\alpha\leq1$,
\begin{align}\label{ZWMK001}
\|z_{k}\|_{L^{\infty}_{T}L_{x}^{p}\cap L_{T}^{\frac{p}{\alpha}}L_{x}^{\frac{3p}{3-2\alpha}}}+\big\|\nabla|z_{k}|^{\frac{p}{2}}\big\|^{\frac{2}{p}}_{L_{T}^{2}L_{x}^{2}}\leq& C(p,\alpha,c,\gamma)\|w_{1}\otimes w_{2}\|_{L_{T}^{2}L_{x}^{p}}.
\end{align}

{\bf Step 2.} Since $z_{k+1}-z_{k}$ also satisfies equations \eqref{CAU001} with $a_{i}$ replaced by $z_{i}$, $i=k-1,k,k+1$, then by the same way as in Step 2 of the proof in Lemma \ref{Lem01}, we deduce that $\{z_{k}\}$ is a Cauchy sequence in $L^{\infty}_{T}L_{x}^{p}\cap L_{T}^{\frac{p}{\alpha}}L_{x}^{\frac{3p}{3-2\alpha}}$ for any $0<\alpha\leq1$. Moreover, there exists a limit $z\in L^{\infty}_{T}L_{x}^{p}\cap L_{T}^{\frac{p}{\alpha}}L_{x}^{\frac{3p}{3-2\alpha}}$ such that
\begin{align*}
\lim\limits_{k\rightarrow\infty}\|z_{k}-z\|_{L_{T}^{\infty}L_{x}^{p}\cap L_{T}^{\frac{p}{\alpha}}L_{x}^{\frac{3p}{3-2\alpha}}}+\|\nabla|z_{k}-z|^{\frac{p}{2}}\|^{\frac{2}{p}}_{L^{2}_{T}L^{2}_{x}}=0,
\end{align*}
which, together with \eqref{ZWMK001} and H\"{o}lder's inequality, shows that
\begin{align*}
\|z\|_{L^{\infty}_{T}L_{x}^{p}\cap L_{T}^{\frac{p}{\alpha}}L_{x}^{\frac{3p}{3-2\alpha}}}+\big\|\nabla|z|^{\frac{p}{2}}\big\|^{\frac{2}{p}}_{L_{T}^{2}L_{x}^{2}}\leq& C(p,\alpha,c,\gamma)\|w_{1}\otimes w_{2}\|_{L_{T}^{2}L_{x}^{p}}\notag\\
\leq&C(p,\alpha,c,\gamma)
\begin{cases}
\|w_{1}\|_{L_{T}^{\frac{4p}{2p-3}}L_{x}^{2p}}\|w_{2}\|_{L_{T}^{\frac{4p}{3}}L_{x}^{2p}},\\
T^{\frac{p-3}{2p}}\|w_{1}\|_{L_{T}^{\frac{4p}{3}}L_{x}^{2p}}\|w_{2}\|_{L_{T}^{\frac{4p}{3}}L_{x}^{2p}}.
\end{cases}
\end{align*}
Following Step 3 in the proof of Lemma \ref{Lem01} with minor modification, we also have $z\in C([0,T];L^{p}(\mathbb{R}^{3}))$. The proof is complete.

\end{proof}

We are now ready to prove Theorems \ref{THM001} and \ref{THM002} by combining Lemmas \ref{PER001}, \ref{Lem01} and \ref{Lem02}.
\begin{proof}[Proofs of Theorems \ref{THM001} and \ref{THM002}]
We divide the proofs into two parts as follows.

{\bf Part 1.} Let $p\geq3$. For $w_{0}\in L^{p}(\mathbb{R}^{3})$ and $w_{i}\in L^{\frac{4p}{3}}([0,T];L^{2p}(\mathbb{R}^{3}))$, $i=1,2$, it follows from Lemmas \ref{Lem01} and \ref{Lem02} that
\begin{align}\label{HME001}
\begin{cases}
\|a\|_{L_{T}^{\frac{4p}{3}}L_{x}^{2p}}\leq  C_{1}\|w_{0}\|_{L^{p}},\\
\|N(w_{1},w_{2})\|_{L_{T}^{\frac{4p}{3}}L_{x}^{2p}}\leq C_{2} T^{\frac{p-3}{2p}}\|w_{1}\|_{L_{T}^{\frac{4p}{3}}L_{x}^{2p}}\|w_{2}\|_{L_{T}^{\frac{4p}{3}}L_{x}^{2p}}.
\end{cases}
\end{align}
Then applying Lemma \ref{PER001} with $E=L_{T}^{\frac{4p}{3}}L_{x}^{2p}$, there exists a small constant $T>0$ such that $\|a\|_{L_{T}^{\frac{4p}{3}}L_{x}^{2p}}T^{\frac{p-3}{2p}}<\frac{1}{4C_{2}}$, and there thus exists a unique local $L^{p}$ mild solution $w$ to problem \eqref{NS005} on $[0,T]$ with $w$ satisfying \eqref{AQ005}.

{\bf Part 2.} Using \eqref{HME001} with $p=3$, we pick $0<\varepsilon_{0}\leq\frac{1}{4C_{1}C_{2}}$ such that $\|w_{0}\|_{L^{3}}<\varepsilon_{0}$. Then for any $T>0$, by applying Lemma \ref{PER001} with $E=L_{T}^{4}L_{x}^{6}$, we obtain that problem \eqref{NS005} has a unique $L^{p}$ mild solution $w$ on $[0,T]$ satisfying
\begin{align}\label{EQUNA01}
\|w\|_{C_{t}L_{x}^{3}\cap L_{t}^{4}L_{x}^{6}}+\|\nabla|w|^{\frac{3}{2}}\|^{\frac{2}{3}}_{L^{2}_{t}L^{2}_{x}}\leq C(p,c,\gamma)\|w_{0}\|_{L^{3}(\mathbb{R}^{3})}.
\end{align}
Since $T>0$ is arbitrary, $w$ is a global-in-time solution.

Consider the case when $w_{0}\in L_{\sigma}^{p}\cap L_{\sigma}^{3}(\mathbb{R}^{3})$ and $\|w_{0}\|_{L^{3}}<\varepsilon_{0}$, where $3<p\leq\frac{9}{2}$. Utilizing the Gagliardo-Nirenberg interpolation inequality with \eqref{EQUNA01}, we deduce that for $3<p\leq\frac{9}{2}$,
\begin{align*}
\|w\|_{L_{t}^{\frac{4p}{2p-3}}L_{x}^{2p}}\leq C\|w\|_{L_{t}^{\infty}L_{x}^{3}}^{\frac{9-2p}{4p}}\|\nabla|w|^{\frac{3}{2}}\|_{L_{t}^{2}L_{x}^{2}}^{\frac{2p-3}{2p}}\leq C\|w_{0}\|_{L^{3}(\mathbb{R}^{3})}.
\end{align*}
This, in combination with Lemmas \ref{Lem01} and \ref{Lem02}, reads that
\begin{align*}
\|w\|_{C_{t}L_{x}^{p}\cap L_{t}^{\frac{4p}{3}}L_{x}^{2p}}\leq& \|a\|_{C_{t}L_{x}^{p}\cap L_{t}^{\frac{4p}{3}}L_{x}^{2p}}+\|N(w,w)\|_{C_{t}L_{x}^{p}\cap L_{t}^{\frac{4p}{3}}L_{x}^{2p}}\notag\\
\leq&C\|w_{0}\|_{L^{p}}+C\|w\|_{L_{t}^{\frac{4p}{2p-3}}L_{x}^{2p}}\|w\|_{L_{t}^{\frac{4p}{3}}L_{x}^{2p}}\notag\\
\leq&C\|w_{0}\|_{L^{p}}+C\varepsilon_{0}\|w\|_{L_{t}^{\frac{4p}{3}}L_{x}^{2p}}.
\end{align*}
By decreasing $\varepsilon_{0}$ if necessary, we have $C\varepsilon_{0}<1$. Then we obtain
\begin{align*}
\|w\|_{C_{t}L_{x}^{p}\cap L_{t}^{\frac{4p}{3}}L_{x}^{2p}}\leq&C\|w_{0}\|_{L^{p}}.
\end{align*}
Therefore, combining these above results, we see that $w$ is a global $L^{p}$ mild solution for problem \eqref{NS005} under the condition of $w_{0}\in L_{\sigma}^{p}\cap L_{\sigma}^{3}(\mathbb{R}^{3})$ and $\|w_{0}\|_{L^{3}}<\varepsilon_{0}$ with $p\in[3,\frac{9}{2}]$.

\end{proof}

\section{Asymptotic stability for solutions to the perturbed equations}\label{SEC004}

We first introduce the logarithmic Sobolev inequality which will be used in the following. Its proof can be seen in Theorem 8.14 of \cite{LL2001}.
\begin{lemma}\label{LEM889}
For any $f\in \dot{H}^{1}(\mathbb{R}^{3})$ and $a>0$, there holds
\begin{align*}
\int_{\mathbb{R}^{3}}f^{2}\ln\frac{f^{2}}{\|f\|_{2}^{2}}dx+3(1+\ln a)\|u\|_{2}^{2}\leq\frac{a^{2}}{\pi}\int_{\mathbb{R}^{3}}|\nabla f|^{2}.
\end{align*}

\end{lemma}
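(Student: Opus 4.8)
The plan is to normalize to unit mass, run Jensen's inequality against the probability measure $f^{2}\,dx$ together with a sharp interpolation inequality, and then remove the resulting logarithm by a one-variable optimization in $a$. First I would normalize: put $g=f/\|f\|_{2}$ (assuming, as the statement tacitly requires, that $f\in L^{2}\cap\dot H^{1}(\mathbb{R}^{3})$, so that by the embedding $\dot H^{1}(\mathbb{R}^{3})\hookrightarrow L^{6}$ one has $f\in L^{q}$ for $2\le q\le6$ and the integrals below are well defined). Since $|\nabla g|^{2}=|\nabla f|^{2}/\|f\|_{2}^{2}$ and $g^{2}\ln g^{2}=\|f\|_{2}^{-2}\,f^{2}\ln(f^{2}/\|f\|_{2}^{2})$, dividing the claimed inequality by $\|f\|_{2}^{2}$ turns it into the same inequality for $g$ with $\|g\|_{2}=1$. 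So it suffices to treat the case $\|f\|_{2}=1$ (and the $\|u\|_{2}$ in the statement is of course $\|f\|_{2}$).

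With $\|f\|_{2}=1$, the measure $d\mu:=f^{2}\,dx$ is a probability measure, so for any fixed $p\in(1,3]$ concavity of the logarithm (Jensen) gives
\[
\int_{\mathbb{R}^{3}}f^{2}\ln f^{2}\,dx=\frac{1}{p-1}\int_{\mathbb{R}^{3}}\ln\big((f^{2})^{p-1}\big)\,d\mu\le\frac{1}{p-1}\ln\Big(\int_{\mathbb{R}^{3}}f^{2p}\,dx\Big)=\frac{2p}{p-1}\ln\|f\|_{2p}.
\]
Next I would feed in the sharp Gagliardo--Nirenberg inequality $\|f\|_{2p}\le C(p)\|\nabla f\|_{2}^{\theta}$ (valid under $\|f\|_{2}=1$), where scaling forces $\theta=\frac{3(p-1)}{2p}$; substituting, the factor $\frac{2p}{p-1}\cdot\theta$ collapses to $3$, leaving
\[
\int_{\mathbb{R}^{3}}f^{2}\ln f^{2}\,dx\le\frac{3}{2}\ln\|\nabla f\|_{2}^{2}+\frac{2p\ln C(p)}{p-1}.
\]
Optimizing over $p$ (letting $p\to1^{+}$ with the explicit sharp value of $C(p)$) produces the scale-invariant Euclidean logarithmic Sobolev inequality $\int_{\mathbb{R}^{3}}f^{2}\ln f^{2}\,dx\le\frac{3}{2}\ln\big(\frac{2}{3\pi e}\|\nabla f\|_{2}^{2}\big)$.

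Finally, I would strip the logarithm on the right by the tangent-line bound $\ln t\le t/t_{0}+\ln t_{0}-1$, valid for all $t,t_{0}>0$ by concavity of $\ln$; applying it with $t=\frac{2}{3\pi e}\|\nabla f\|_{2}^{2}$ and the choice $t_{0}=(ea^{2})^{-1}$ yields $\frac{3}{2}\cdot\frac{t}{t_{0}}=\frac{a^{2}}{\pi}\|\nabla f\|_{2}^{2}$ and $\frac{3}{2}(\ln t_{0}-1)=-3(1+\ln a)$, which is exactly the asserted inequality; undoing the normalization of the first paragraph returns the general case. The normalization and the tangent-line step are elementary, and the Jensen step is one line; the only substantive input is the sharp constant $\frac{2}{3\pi e}$ (equivalently the factor $1/\pi$ in the final form). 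Indeed, Jensen combined with the merely-continuous Sobolev inequality $\|f\|_{6}^{2}\le S_{3}\|\nabla f\|_{2}^{2}$ already gives an estimate of the correct shape but with $S_{3}$ in place of $\frac{2}{3\pi e}$, and pinning down the optimal value requires either the sharp Gagliardo--Nirenberg constants of Del Pino--Dolbeault or, equivalently, Gross's Gaussian logarithmic Sobolev inequality $\int g^{2}\ln g^{2}\,d\gamma_{3}\le2\int|\nabla g|^{2}\,d\gamma_{3}$ transported to Lebesgue measure by an explicit Gaussian change of variables. Since this is classical, we simply quote it in the packaged form of Theorem 8.14 of \cite{LL2001}.
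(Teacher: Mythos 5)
Your proposal is correct, and in the end it rests on exactly the same foundation as the paper: the paper offers no proof of Lemma \ref{LEM889} beyond the citation of Theorem 8.14 in \cite{LL2001}, which states the inequality verbatim in the parametrized form with the free scale $a>0$, and you too ultimately quote that theorem. The extra material you supply --- the reduction to $\|f\|_{2}=1$, the Jensen/Gagliardo--Nirenberg derivation of the scale-invariant entropy bound, and the tangent-line step showing the parametrized and scale-invariant forms are equivalent --- is a sound and instructive sketch (the exponent bookkeeping $\tfrac{2p}{p-1}\cdot\tfrac{3(p-1)}{2p}=3$ and the choice $t_{0}=(ea^{2})^{-1}$ both check out, reproducing the constants $\tfrac{a^{2}}{\pi}$ and $-3(1+\ln a)$ exactly), but it is not needed once Theorem 8.14 is invoked. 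One small inaccuracy in the sketch: the interpolation inequality you write, $\|f\|_{2p}\le C(p)\|\nabla f\|_{2}^{\theta}\|f\|_{2}^{1-\theta}$, is the $L^{2}$-based Gagliardo--Nirenberg family, whose sharp constants are not explicit (they are characterized via Weinstein's ground-state variational problem); the explicit Del Pino--Dolbeault constants are for the family with $\|f\|_{p+1}$ on the right. Since $p+1\to2$ as $p\to1^{+}$ the limiting argument can be repaired by working with that family instead, and in any case the issue is moot because you, like the authors, fall back on the packaged classical statement.
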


We are now ready to give the proof of Theorem \ref{THM003}. Remark that the method used in Theorem \ref{THM003} can be trace back to previous work \cite{CL1995} and has been applied to the study for asymptotic stability of solutions to the perturbed problem of Landau solutions to the Navier-Stokes equations in recent work \cite{LZZ2023}.
\begin{proof}[Proof of Theorem \ref{THM003}]
We divide into two substeps to complete the proof.

{\bf Step 1.} Consider the case when $q>3$. Set $T>0$ and denote $r(t)=\frac{1}{\frac{1}{T}(\frac{1}{q}-\frac{1}{3})t+\frac{1}{3}}$, $t\in[0,T]$. By the standard approximation argument (for example, see \cite{KT2020,LR2002,LR2016}), we assume without loss of generality that $w$ is smooth. In light of \eqref{NS005}, it follows from a direct calculation that
\begin{align}\label{MGEQ001}
&r(t)^{2}\|w(\cdot,t)\|_{r(t)}^{r(t)-1}\frac{d}{dt}\|w(\cdot,t)\|_{r(t)}\notag\\
&=\dot{r}(t)\int_{\mathbb{R}^{3}}|w(\cdot,t)|^{r(t)}\ln\big(|w(\cdot,t)|^{r(t)}/\|w(\cdot,t)\|_{r(t)}^{r(t)}\big)dx\notag\\
&\quad+r(t)^{2}\int_{\mathbb{R}^{3}}|w(\cdot,t)|^{r(t)-2}w_{i}\partial_{j}\partial_{j}w_{i}dx\notag\\
&\quad+r(t)^{2}\int_{\mathbb{R}^{3}}|w(\cdot,t)|^{r(t)-2}w_{i}[\partial_{j}(w_{i}w_{j}+w_{i}u^{c,\gamma}_{j}+u^{c,\gamma}_{i}w_{j})-\partial_{i}\pi]dx.
\end{align}
For the second term on the right-hand side of \eqref{MGEQ001}, we have from integration by parts that
\begin{align}\label{MGEQ002}
&r(t)^{2}\int_{\mathbb{R}^{3}}|w(\cdot,t)|^{r(t)-2}w_{i}\partial_{j}\partial_{j}w_{i}dx=-r(t)^{2}\int_{\mathbb{R}^{3}}\partial_{j}(|w(\cdot,t)|^{r(t)-2}w_{i})\partial_{j}w_{i}dx\notag\\
&=-\frac{r(t)^{2}}{2}\int_{\mathbb{R}^{3}}\nabla|w(t)|^{r(t)-2}\nabla|w|^{2}dx-r(t)^{2}\int_{\mathbb{R}^{3}}|w(t)|^{r(t)-2}|\nabla w|^{2}dx\notag\\
&=-4(r(t)-2)\big\|\nabla|w(t)|^{\frac{r(t)}{2}}\big\|_{L^{2}}^{2}-r(t)^{2}\int_{\mathbb{R}^{3}}|w(t)|^{r(t)-2}|\nabla w|^{2}dx.
\end{align}
With regard to the last term on the right-hand side of \eqref{MGEQ001}, we have the following result. Claim that
\begin{align}\label{FN006}
&\left|r(t)^{2}\int_{\mathbb{R}^{3}}|w(\cdot,t)|^{r(t)-2}w_{i}[\partial_{j}(w_{i}w_{j}+w_{i}u^{c,\gamma}_{j}+u^{c,\gamma}_{i}w_{j})-\partial_{i}\pi]dx\right|\notag\\
&\leq C(q,c,\gamma)r^{2}(K(c,\gamma)+\|w_{0}\|_{L^{3}})\|\nabla|w|^{\frac{r}{2}}\|_{L^{2}}^{2}.
\end{align}
For simplicity, denote
\begin{align*}
\mathcal{A}_{1}=&r(t)^{2}\int_{\mathbb{R}^{3}}|w(\cdot,t)|^{r(t)-2}w_{i}\partial_{j}(w_{i}w_{j})dx,\notag\\
\mathcal{A}_{2}=&r(t)^{2}\int_{\mathbb{R}^{3}}|w(\cdot,t)|^{r(t)-2}w_{i}\partial_{j}(w_{i}u^{c,\gamma}_{j})dx,\notag\\
\mathcal{A}_{3}=&r(t)^{2}\int_{\mathbb{R}^{3}}|w(\cdot,t)|^{r(t)-2}w_{i}\partial_{j}(u^{c,\gamma}_{i}w_{j})dx,\notag\\
\mathcal{A}_{4}=&r(t)^{2}\int_{\mathbb{R}^{3}}|w(\cdot,t)|^{r(t)-2}w_{i}\partial_{i}\pi dx.
\end{align*}
As for the first term $\mathcal{A}_{1}$, we have from integration by parts, H\"{o}lder's inequality, Sobolev embedding $\dot{H}^{1}(\mathbb{R}^{3})\hookrightarrow L^{6}(\mathbb{R}^{3})$ (see \cite{S1992} for the best constant) and \eqref{EQUNA01} that
\begin{align*}
\mathcal{A}_{1}=&-r(t)^{2}\int_{\mathbb{R}^{3}}\nabla|w(\cdot,t)|^{r(t)-2}w|w|^{2}dx\notag\\
\leq&r(t)(r(t)-2)\int_{\mathbb{R}^{3}}\big|\nabla|w(\cdot,t)|^{\frac{r(t)}{2}}\big||w|^{\frac{r(t)}{2}}|w|dx\notag\\
\leq&r(t)(r(t)-2)\big\|\nabla|w|^{\frac{r(t)}{2}}\big\|_{L^{2}}\big\||w|^{\frac{r(t)}{2}}\big\|_{L^{6}}\|w\|_{L^{3}}\notag\\
\leq&C_{0}r(t)(r(t)-2)\|w_{0}\|_{L^{3}}\big\|\nabla|w|^{\frac{r(t)}{2}}\big\|^{2}_{L^{2}}.
\end{align*}
For the second term $\mathcal{A}_{2}$, a consequence of integration by parts, H\"{o}lder's inequality, Corollary \ref{MZLEM001} and Lemma \ref{CORO001} gives that
\begin{align*}
\mathcal{A}_{2}=&-\frac{r(t)^{2}}{2}\int_{\mathbb{R}^{3}}u^{c,\gamma}\nabla|w(\cdot,t)|^{r(t)-2}|w|^{2}dx\notag\\
=&-r(t)(r(t)-2)\int_{\Omega_{e^{-1}}}(|x'||x|)^{\frac{1}{2}}u^{c,\gamma}\nabla|w(\cdot,t)|^{\frac{r(t)}{2}}\frac{|w(\cdot,t)|^{\frac{r(t)}{2}}}{(|x'||x|)^{\frac{1}{2}}}dx\notag\\
&-r(t)(r(t)-2)\int_{\mathbb{R}^{3}\setminus\Omega_{e^{-1}}}|x|u^{c,\gamma}\nabla|w(\cdot,t)|^{\frac{r(t)}{2}}\frac{|w(\cdot,t)|^{\frac{r(t)}{2}}}{|x|}dx\notag\\
\leq&r(t)(r(t)-2)K(c,\gamma)\big\|\nabla|w|^{\frac{r(t)}{2}}\big\|_{L^{2}}\left(\bigg\|\frac{|w|^{\frac{r(t)}{2}}}{(|x'||x|)^{\frac{1}{2}}}\bigg\|_{L^{2}}+\bigg\|\frac{|w|^{\frac{r(t)}{2}}}{|x|}\bigg\|_{L^{2}}\right)\notag\\
\leq&C_{0}r(t)(r(t)-2)K(c,\gamma)\big\|\nabla|w|^{\frac{r(t)}{2}}\big\|^{2}_{L^{2}}.
\end{align*}
Recall the following elementary inequalities that

$(1)$ for real numbers $a_{i}$, $i=1,...,n$, $n\geq1$,
\begin{align}\label{NUM001}
\left(\sum^{n}_{i=1}a_{i}\right)^{2}\leq n\sum^{n}_{i=1}a_{i}^{2};
\end{align}

$(2)$ for $c,d\geq0$, $c+d\geq1$, $1\leq m\leq\infty$, $x_{i},y_{i}\geq0$, $1\leq i\leq m,$
\begin{align}\label{EQU02}
\sum^{m}_{i=1}x_{i}^{c}y_{i}^{d}\leq\left(\sum^{m}_{i=1}x_{i}\right)^{c}\left(\sum^{m}_{i=1}y_{i}\right)^{d}.
\end{align}
With regard to the third term $\mathcal{A}_{3}$, it follows from Corollary \ref{MZLEM001}, \eqref{Z02}, \eqref{NUM001} and \eqref{EQU02} that
\begin{align*}
\mathcal{A}_{3}\leq&9r(t)^{2}\int_{\mathbb{R}^{3}}\frac{|w(\cdot,t)|^{r(t)}}{|x'||x|}(|x'||x||\nabla u^{c,\gamma}|)dx\notag\\
\leq&9r(t)^{2}K(c,\gamma)\big\|\nabla|w|^{\frac{r(t)}{2}}\big\|^{2}_{L^{2}}.
\end{align*}
Remark that we use \eqref{Z02} rather than \eqref{Z01} to achieve a direct and simple computation for the term $\mathcal{A}_{3}$.

We now proceed to estimate the last term $\mathcal{A}_{4}$. Since $\pi=(-\Delta)^{-1}\partial_{i}\partial_{j}(u^{c,\gamma}_{i}w_{j}+w_{i}u^{c,\gamma}_{j}+w_{i}w_{j})$, we obtain from integration by parts that
\begin{align*}
\mathcal{A}_{4}=&r(t)^{2}\int_{\mathbb{R}^{3}}\partial_{i}|w(\cdot,t)|^{r(t)-2}w_{i}\pi dx\notag\\
=&r(t)^{2}\int_{\mathbb{R}^{3}}\partial_{i}|w(\cdot,t)|^{r(t)-2}w_{i}(-\Delta)^{-1}\partial_{i}\partial_{j}(u^{c,\gamma}_{i}w_{j}+w_{i}u^{c,\gamma}_{j})dx\notag\\
&+r(t)^{2}\int_{\mathbb{R}^{3}}\partial_{i}|w(\cdot,t)|^{r(t)-2}w_{i}(-\Delta)^{-1}\partial_{i}\partial_{j}(w_{i}w_{j})dx\notag\\
=&:\mathcal{A}_{4}^{1}+\mathcal{A}_{4}^{2}.
\end{align*}
Similar to \eqref{ZZW006}, since $(|x'||x|)^{\frac{r-2}{2}}$ and $|x|^{r-2}$ are all $A_{r}$-weights, it follows from Theorem 9.4.6 in \cite{G2009}, H\"{o}lder's inequality, Corollary \ref{MZLEM001} and Lemma \ref{CORO001} that
\begin{align*}
\mathcal{A}_{4}^{1}\leq&2r(t)(r(t)-2)\int_{\mathbb{R}^{3}}|\nabla|w(\cdot,t)|^{\frac{r(t)}{2}}||w(\cdot,t)|^{\frac{r(t)}{2}-1}|(-\Delta)^{-1}\partial_{i}\partial_{j}(u^{c,\gamma}_{i}w_{j}+w_{i}u^{c,\gamma}_{j})|dx\notag\\
\leq&4r(t)(r(t)-2)C_{r}\|(|x'||x|)^{\frac{r-2}{2r}}(u^{c,\gamma}\otimes w)\|_{L^{r}(\Omega_{e^{-1}})}\bigg\|\frac{|w|^{\frac{r}{2}-1}}{(|x'||x|)^{\frac{r-2}{2r}}}\bigg\|_{L^{\frac{2r}{r-2}}}\|\nabla |w|^{\frac{r}{2}}\|_{L^{2}}\notag\\
&+4r(t)(r(t)-2)C_{r}\||x|^{\frac{r-2}{r}}(u^{c,\gamma}\otimes w)\|_{L^{r}(\mathbb{R}^{3}\setminus\Omega_{e^{-1}})}\bigg\|\frac{|w|^{\frac{r}{2}-1}}{|x|^{\frac{r-2}{r}}}\bigg\|_{L^{\frac{2r}{r-2}}}\|\nabla |w|^{\frac{r}{2}}\|_{L^{2}}\notag\\
\leq&4r(t)(r(t)-2)C_{r}K(c,\gamma)\left(\bigg\|\frac{|w|^{\frac{r}{2}}}{(|x'||x|)^{\frac{1}{2}}}\bigg\|_{L^{2}}+\bigg\|\frac{|w|^{\frac{r}{2}}}{|x|}\bigg\|_{L^{2}}\right)\|\nabla |w|^{\frac{r}{2}}\|_{L^{2}}\notag\\
\leq&4\overline{C}r(t)(r(t)-2)C_{r}K(c,\gamma)\|\nabla |w|^{\frac{r}{2}}\|^{2}_{L^{2}},
\end{align*}
where $\overline{C}$ is the embedding constant and $C_{r}$ is given in Theorem 9.4.6 of \cite{G2009}. As for $\mathcal{A}_{4}^{2}$, we deduce from \eqref{QMYU001}, H\"{o}lder's inequality and Sobolev embedding $\dot{H}_{1}(\mathbb{R}^{3})\hookrightarrow L^{6}(\mathbb{R}^{3})$ that
\begin{align*}
\mathcal{A}_{4}^{2}\leq&2r(t)(r(t)-2)\big\|\nabla|w(\cdot,t)|^{\frac{r(t)}{2}}\big\|_{L^{2}}\big\||w(\cdot,t)|^{\frac{r(t)}{2}-1}\big\|_{L^{\frac{6r}{r-2}}}\|(-\Delta)^{-1}\partial_{i}\partial_{j}w_{i}w_{j}\|_{L^{\frac{3r}{r+1}}}\notag\\
\leq&2r(t)(r(t)-2)\overline{C}_{0}H_{\frac{3r}{r+1}}\big\|\nabla|w(\cdot,t)|^{\frac{r(t)}{2}}\big\|_{L^{2}}\big\||w(\cdot,t)|^{\frac{r(t)}{2}-1}\big\|_{L^{\frac{6r}{r-2}}}\|w\otimes w\|_{L^{\frac{3r}{r+1}}}\notag\\
\leq&2r(t)(r(t)-2)\overline{C}_{0}H_{\frac{3r}{r+1}}\big\|\nabla|w(\cdot,t)|^{\frac{r(t)}{2}}\big\|_{L^{2}}\big\|w(\cdot,t)\big\|_{L^{3r}}^{\frac{r(t)}{2}-1}\|w(\cdot,t)\|_{L^{3r}}\|w(\cdot,t)\|_{L^{3}}\notag\\
\leq&2r(t)(r(t)-2)\overline{C}_{0}H_{\frac{3r}{r+1}}\big\|\nabla|w(\cdot,t)|^{\frac{r(t)}{2}}\big\|_{L^{2}}\big\||w(\cdot,t)|^{\frac{r(t)}{2}}\big\|_{L^{6}}\|w(\cdot,t)\|_{L^{3}}\notag\\
\leq&Cr(t)(r(t)-2)\|w_{0}\|_{L^{3}}\big\|\nabla|w(\cdot,t)|^{\frac{r(t)}{2}}\big\|_{L^{2}}^{2}.
\end{align*}
Combining these above results, we obtain that \eqref{FN006} holds.

Once \eqref{FN006} is proved, for any $q>3$ and $1<\tau<1$, we can choose two sufficiently small positive constants $\delta=\delta(q,\tau)\leq\delta_{0}$ and $\varepsilon=\varepsilon(q,\tau)\leq\varepsilon_{0}$, where $\delta_{0}$ and $\varepsilon_{0}$ are given in Theorem \ref{THM002}, such that if $(c,\gamma)\in M\cap\{|(c,\gamma)|\leq\delta\}$ and $\|w_{0}\|_{L^{3}}<\varepsilon$, then
\begin{align}\label{ARQK002}
Cr^{2}(K(c,\gamma)+\varepsilon)\leq 4\tau(r-2).
\end{align}
Therefore, substituting \eqref{MGEQ002}, \eqref{FN006} and \eqref{ARQK002} into \eqref{MGEQ001}, we have
\begin{align*}
&r(t)^{2}\|w(\cdot,t)\|_{r(t)}^{r(t)-1}\frac{d}{dt}\|w(\cdot,t)\|_{r(t)}\notag\\
&\leq\dot{r}(t)\int_{\mathbb{R}^{3}}|w(\cdot,t)|^{r(t)}\ln\big(|w(\cdot,t)|^{r(t)}/\|w(\cdot,t)\|_{r(t)}^{r(t)}\big)dx\notag\\
&\quad-4(r(t)-2)(1-\tau)\|\nabla|w|^{\frac{r}{2}}\|_{L^{2}}^{2},
\end{align*}
which, together with Lemma \ref{LEM889}, shows that for $a>0,$
\begin{align*}
&r(t)^{2}\|w(\cdot,t)\|_{r(t)}^{r(t)-1}\frac{d}{dt}\|w(\cdot,t)\|_{r(t)}\notag\\
&\leq\left(\frac{a^{2}}{\pi}\dot{r}-4(r(t)-2)(1-\tau)\right)\|\nabla|w|^{\frac{r}{2}}\|_{L^{2}}^{2}-3\dot{r}(1+\ln a)\big\||w|^{\frac{r}{2}}\big\|_{L^{2}}^{2}.
\end{align*}
By choosing $a=\big(\frac{4\pi(r(t)-2)(1-\tau)}{\dot{r}}\big)^{\frac{1}{2}}$, we have
\begin{align*}
&r(t)^{2}\|w(\cdot,t)\|_{r(t)}^{r(t)-1}\frac{d}{dt}\|w(\cdot,t)\|_{r(t)}\leq-3\dot{r}(1+\ln a)\big\||w|^{\frac{r}{2}}\big\|_{L^{2}}^{2},
\end{align*}
which reads that
\begin{align*}
\frac{d}{dt}\|w(t)\|_{r(t)}\leq&\bigg[-\frac{1}{T}\left(\frac{1}{3}-\frac{1}{q}\right)\left(3+\frac{3}{2}\ln\frac{4\pi(r(t)-2)(1-\tau)}{r^{2}}\right)\notag\\
&\;+\frac{3}{2T}\left(\frac{1}{3}-\frac{1}{q}\right)\ln\frac{1}{T}\left(\frac{1}{3}-\frac{1}{q}\right)\bigg]\|w(t)\|_{r(t)}.
\end{align*}
By Gronwall's inequality, we deduce
\begin{align}\label{DECON06}
\|w(T)\|_{L^{q}}\leq\mathcal{C}_{q}\left(\frac{1}{3}-\frac{1}{q}\right)^{\frac{3}{2}(\frac{1}{3}-\frac{1}{q})}T^{-\frac{3}{2}(\frac{1}{3}-\frac{1}{q})}\|w_{0}\|_{L^{3}},
\end{align}
where
\begin{align*}
\mathcal{C}_{q}=e^{-\frac{1}{T}(\frac{1}{3}-\frac{1}{q})\int_{0}^{T}(3+\frac{3}{2}\ln\frac{4\pi(r(t)-2)(1-\tau)}{r(t)^{2}})dt}.
\end{align*}
We proceed to give a precise calculation in terms of the value of $\mathcal{C}_{q}$. Observe that
\begin{align*}
\int_{0}^{T}\ln\frac{4\pi(r(t)-2)(1-\tau)}{r(t)^{2}}dt=\frac{T}{\frac{1}{3}-\frac{1}{q}}\int_{3}^{q}\frac{1}{r^{2}}\ln\frac{4\pi(r-2)(1-\tau)}{r^{2}}dr,
\end{align*}
and
\begin{align*}
&\int_{3}^{q}\frac{1}{r^{2}}\ln\frac{4\pi(r-2)(1-\tau)}{r^{2}}dr\notag\\
&=\int_{3}^{q}\frac{\ln4\pi(1-\tau)}{r^{2}}dr+\int_{3}^{q}\frac{\ln(r-2)}{r^{2}}dr-\int_{3}^{q}\frac{\ln r^{2}}{r^{2}}dr.
\end{align*}
By a straight-forward computation, we have
\begin{align*}
&\int_{3}^{q}\frac{\ln4\pi(1-\tau)}{r^{2}}dr=\left(\frac{1}{3}-\frac{1}{q}\right)\ln4\pi(1-\tau),\\
&\int_{3}^{q}\frac{\ln(r-2)}{r^{2}}dr=\left(\frac{1}{2}-\frac{1}{q}\right)\ln(q-2)+\frac{1}{2}\ln\frac{3}{q},\\
&\int_{3}^{q}\frac{\ln r^{2}}{r^{2}}dr=2\left(\frac{1+\ln3}{3}-\frac{1+\ln q}{q}\right).
\end{align*}
Combining these above computational results, we obtain
\begin{align*}
\mathcal{C}_{q}=3^{-\frac{7}{4}}q^{\frac{3}{q}}\left(q-2\right)^{\frac{3}{2q}}\left(\frac{q}{q-2}\right)^{\frac{3}{4}}(4\pi(1-\tau))^{-\frac{3}{2}(\frac{1}{3}-\frac{1}{q})}e^{-6(\frac{1}{3}-\frac{1}{q})}.
\end{align*}

{\bf Step 2.} Consider the case of $q=3$. Since $w_{0}\in L_{\sigma}^{3}$ and $L_{\sigma}^{\frac{5}{2}}\cap L_{\sigma}^{3}$ is dense in $L_{\sigma}^{3}$, then there exists a sequence $\{w_{0,k}\}$ in $L_{\sigma}^{\frac{5}{2}}\cap L_{\sigma}^{3}$ such that
\begin{align}\label{CONVERG}
\lim_{k\rightarrow\infty}\|w_{0,k}-w_{0}\|_{L^{3}}=0.
\end{align}
This, together with the assumed condition that $\|w_{0}\|_{L^{3}}<\frac{\varepsilon_{0}}{2}$, leads to that there exists some $N_{0}=N(\varepsilon_{0})$ such that if $k\geq N_{0}$, we have $\|w_{0,k}-w_{0}\|_{L^{3}}<\frac{\varepsilon_{0}}{2}$. Then we obtain that for $k\geq N_{0}$,
\begin{align*}
\|w_{0,k}\|_{L^{3}}\leq\|w_{0,k}-w_{0}\|_{L^{3}}+\|w_{0}\|_{L^{3}}<\varepsilon_{0}.
\end{align*}
Then using Theorem \ref{THM002}, for every $k\geq N_{0}$, there exists a unique global $L^{3}$ mild solution $w_{k}$ to equations \eqref{NS005} with the initial data $w_{0,k}\in L_{\sigma}^{3}\cap L_{\sigma}^{\frac{5}{2}}(\mathbb{R}^{3})$ and $\|w_{0,k}\|_{L^{3}}<\varepsilon_{0}$. For any $T>0,$ by picking $r_{k}(t)=\frac{1}{-\frac{t}{15T}+\frac{2}{5}}$ on $[0,T]$, it then follows from the proof of \eqref{DECON06} with a slight modification that for $T>0$,
\begin{align}\label{ZWK001}
\|w_{k}(T)\|_{L^{3}}\leq C_{0}T^{-\frac{1}{10}}\|w_{0,k}\|_{L^{\frac{5}{2}}}\rightarrow0,\quad\text{as $T\rightarrow\infty$},
\end{align}
where $C_{0}$ is a universal positive constant. It is worth emphasizing that we adopt the norm $\|w_{0,k}\|_{L^{\frac{5}{2}}}$ rather than $\|w_{0,k}\|_{L^{2}}$ on the right-hand side of \eqref{ZWK001} for the purpose of ensuring that condition \eqref{ARQK002} holds.

For $k\geq N_{0}$, let $v_{k}=w-w_{k}$ and $\pi_{k}=p-p_{k}$. For any $T>0$, $v_{k}$ is a $L^{3}$ mild solution in $\mathbb{R}^{3}\times(0,T)$ of the following equations
\begin{align}\label{EQUAE001}
\begin{cases}
\partial_{t}v_{k}-\Delta v_{k}=\mathrm{div}(-v_{k}\otimes v_{k}+v_{k}\otimes w+w\otimes v_{k})\\
+\mathrm{div}(u^{c,\gamma}\otimes v_{k}+v_{k}\otimes u^{c,\gamma})+\nabla\pi_{k},\\
\mathrm{div}v_{k}=0,\\
v_{k}(x,0)=w_{0}(x)-w_{0,k}(x),
\end{cases}
\end{align}
where
\begin{align}\label{WADEQ001}
\|v_{k}(\cdot,0)\|_{L^{3}}<\frac{\varepsilon_{0}}{2},\quad\text{for $k\geq N_{0}$}.
\end{align}
Applying the proofs of Lemmas \ref{Lem01} and \ref{Lem02} to equations \eqref{EQUAE001} with minor modification, we obtain
\begin{align}\label{DEAQ001}
&\|v_{k}\|_{C_{T}L_{x}^{3}\cap L_{T}^{4}L_{x}^{6}}+\big\|\nabla|v_{k}|^{\frac{3}{2}}\big\|^{\frac{2}{3}}_{L_{T}^{2}L_{x}^{2}}\notag\\
&\leq C_{1}(\|v_{k}(\cdot,0)\|_{L^{3}}+\|v_{k}\|_{L_{T}^{4}L_{x}^{6}}^{2})+C_{2}\left(\int_{0}^{T}\|v_{k}\|_{L^{6}}^{2}\|w\|_{L^{6}}^{2}dt\right)^{\frac{1}{2}}.
\end{align}
Observe from the interpolation inequality that $\|v_{k}\|_{L^{6}}\leq\|v_{k}\|^{\frac{1}{4}}_{L^{3}}\|v_{k}\|^{\frac{3}{4}}_{L^{9}}$. This, together with H\"{o}lder's inequality and Young's inequality, reads that for $\mu>0$,
\begin{align*}
\left(\int_{0}^{T}\|v_{k}\|_{L^{6}}^{2}\|w\|_{L^{6}}^{2}dt\right)^{\frac{1}{2}}\leq&\|v_{k}\|_{L_{T}^{3}L_{x}^{9}}^{\frac{3}{4}}\left(\int_{0}^{T}\|v_{k}\|_{L^{3}}\|w\|_{L^{6}}^{4}dt\right)^{\frac{1}{4}}\notag\\
\leq&\mu\|v_{k}\|_{L_{T}^{3}L_{x}^{9}}+\frac{27}{256\mu^{3}}\int_{0}^{T}\|v_{k}\|_{L^{3}}\|w\|_{L^{6}}^{4}dt.
\end{align*}
Substituting this into \eqref{DEAQ001}, it follows from Sobolev embedding $\dot{H}^{1}(\mathbb{R}^{3})\hookrightarrow L^{6}(\mathbb{R}^{3})$ that
\begin{align*}
&\|v_{k}\|_{C_{T}L_{x}^{3}\cap L_{t}^{4}L_{x}^{6}}+\big\|\nabla|v_{k}|^{\frac{3}{2}}\big\|^{\frac{2}{3}}_{L_{T}^{2}L_{x}^{2}}\notag\\
&\leq C_{1}\big(\|v_{k}(\cdot,0)\|_{L^{3}}+\|v_{k}\|_{L_{T}^{4}L_{x}^{6}}^{2}\big)+\widetilde{C}_{2}\mu\|\nabla|v_{k}|^{\frac{3}{2}}\|_{L_{T}^{2}L_{x}^{2}}^{\frac{2}{3}}+\frac{C_{2}}{\mu^{3}}\int_{0}^{T}\|v_{k}\|_{L^{3}}\|w\|_{L^{6}}^{4}dt.
\end{align*}
Picking $\widetilde{C}_{2}\mu=\frac{1}{2}$, it follows from Gronwall's inequality that
\begin{align}\label{FEQ001}
&\|v_{k}\|_{C_{T}L_{x}^{3}\cap L_{T}^{4}L_{x}^{6}}+\big\|\nabla|v_{k}|^{\frac{3}{2}}\big\|^{\frac{2}{3}}_{L_{T}^{2}L_{x}^{2}}\notag\\
&\leq C_{1}\big(\|v_{k}(\cdot,0)\|_{L^{3}}+\|v_{k}\|_{L_{T}^{4}L_{x}^{6}}^{2}\big)e^{C_{2}\int_{0}^{T}\|w\|_{L^{6}}^{4}dt}.
\end{align}
In light of \eqref{EQUNA01} and \eqref{WADEQ001}, decreasing $\varepsilon_{0}$ if necessary, we have
\begin{align}\label{COD01}
\|v_{k}(\cdot,0)\|_{L^{3}}\leq& \frac{\varepsilon_{0}}{2}\leq \left(4C_{1}^{2}e^{2C_{2}C(c,\gamma)\varepsilon_{0}^{4}}\right)^{-1}\leq \left(4C_{1}^{2}e^{2C_{2}C(c,\gamma)\|w_{0}\|^{4}_{L^{3}}}\right)^{-1}\notag\\
\leq&\left(4C_{1}^{2}e^{2C_{2}\int_{0}^{T}\|w\|_{L^{6}}^{4}dt}\right)^{-1}.
\end{align}
Claim that for any $T>0$,
\begin{align}\label{COND02}
&\|v_{k}\|_{C_{T}L_{x}^{3}\cap L_{t}^{4}L_{x}^{6}}+\big\|\nabla|v_{k}|^{\frac{3}{2}}\big\|^{\frac{2}{3}}_{L_{T}^{2}L_{x}^{2}}\leq2C_{1}\|v_{k}(\cdot,0)\|_{L^{3}}e^{C_{2}\int_{0}^{T}\|w\|_{L^{6}}^{4}dt}.
\end{align}
Instead of using the continuity method (as pointed out in page 25 of \cite{LZZ2023}) to prove that \eqref{COND02} holds, we here provide a new and simple proof. For simplicity, denote
\begin{align*}
g(T):=&\|v_{k}\|_{C_{T}L_{x}^{3}\cap L_{T}^{4}L_{x}^{6}}+\big\|\nabla|v_{k}|^{\frac{3}{2}}\big\|^{\frac{2}{3}}_{L_{T}^{2}L_{x}^{2}},\notag\\
a:=&\|v_{k}(\cdot,0)\|_{L^{3}},\;\, b(T):=e^{C_{2}\int_{0}^{T}\|w\|_{L^{6}}^{4}dt}.
\end{align*}
Then from \eqref{FEQ001}, we have
\begin{align}\label{ZDAE06}
g(T)^{2}-\frac{1}{C_{1}b(T)}g(T)+a\geq0.
\end{align}
Hence by using \eqref{COD01}, we obtain two roots as follows:
\begin{align*}
g_{\pm}(T)=\frac{1\pm\sqrt{1-4ab(T)^{2}C_{1}^{2}}}{2C_{1}b(T)}.
\end{align*}
It suffices to require that $g(T)\leq g_{-}(T)$ or $g(T)\geq g_{+}(T)$ for the purpose of letting \eqref{ZDAE06} hold. We now show that the case of $g(T)\geq g_{+}(T)$ is invalid and must be excluded. In fact, if $g(T)\geq g_{+}(T)$ holds and then let $T\rightarrow 0^{+}$, we have
\begin{align*}
g(0^{+})=&\|v_{k}(\cdot,0)\|_{L^{3}}\geq g_{+}(0^{+})=\frac{1+\sqrt{1-4ab(0)^{2}C_{1}^{2}}}{2C_{1}b(0)}\notag\\
\geq&\frac{2-4 C_{1}^{2}\|v_{k}(\cdot,0)\|_{L^{3}}}{2C_{1}}=\frac{1}{C_{1}}-2C_{1}\|v_{k}(\cdot,0)\|_{L^{3}},
\end{align*}
which, in combination with \eqref{WADEQ001}, reads that for $k\geq N_{0}$,
\begin{align*}
\frac{1}{C_{1}}\leq(2C_{1}+1)\|v_{k}(\cdot,0)\|_{L^{3}}\leq \frac{(2C_{1}+1)\varepsilon_{0}}{2}.
\end{align*}
This leads to a contradiction, since $\varepsilon_{0}$ can be chosen to be small enough such that $\varepsilon_{0}<\frac{2}{C_{1}(2C_{1}+1)}.$ Therefore, we have
\begin{align*}
&\|v_{k}\|_{C_{T}L_{x}^{3}\cap L_{T}^{4}L_{x}^{6}}+\big\|\nabla|v_{k}|^{\frac{3}{2}}\big\|^{\frac{2}{3}}_{L_{T}^{2}L_{x}^{2}}\notag\\
&=g(T)\leq g_{-}(T)=\frac{1-\sqrt{1-4ab(T)^{2}C_{1}^{2}}}{2C_{1}b(T)}\leq\frac{4ab(T)^{2}C_{1}^{2}}{2C_{1}b(T)}\notag\\
&=2C_{1}\|v_{k}(\cdot,0)\|_{L^{3}}e^{C_{2}\int_{0}^{T}\|w\|_{L^{6}}^{4}dt}.
\end{align*}
That is, \eqref{COND02} holds. Sending $T\rightarrow\infty$ and using \eqref{EQUNA01}, we obtain from \eqref{CONVERG} that
\begin{align}\label{AMI008}
\|w-w_{k}\|_{L_{t}^{\infty}([0,\infty);L_{x}^{3})}\leq& 2C_{1}\|v_{k}(\cdot,0)\|_{L^{3}}e^{C_{2}\int_{0}^{\infty}\|w\|_{L^{6}}^{4}dt}\notag\\
\leq& 2C_{1}\|w_{0}-w_{0,k}\|_{L^{3}}e^{C_{0}\|w_{0}\|_{L^{3}}^{4}}\rightarrow0,\quad\text{as $k\rightarrow\infty$}.
\end{align}
Observe that for $k\geq N_{0}$ and $t>0$,
\begin{align*}
\|w(t)\|_{L^{3}}\leq\|w-w_{k}\|_{L_{t}^{\infty}([0,\infty);L_{x}^{3})}+\|w_{k}(t)\|_{L^{3}}.
\end{align*}
Sending $k\rightarrow\infty$ and then $t\rightarrow\infty$, it follows from \eqref{ZWK001} and \eqref{AMI008} that $$\lim\limits_{t\rightarrow\infty}\|w(t)\|_{L^{3}}=0.$$

\end{proof}

\section{The relations between $L^{p}$ mild solutions and $L^{2}$ weak solutions}\label{SEC005}

This section is devoted to making clear the relations between $L^{p}$ mild solutions and $L^{2}$ weak solutions for the perturbed problem \eqref{NS005}. To begin with, we introduce the definition of $L^{2}$ weak solution as follows.
\begin{definition}[$L^{2}$ weak solution]\label{DEFINI06}
For $T>0,$ a vector-valued function $w$ is called a $L^{2}$ weak solution of problem \eqref{NS005}, if
\begin{align}\label{WEA01}
w\in C_{w}([0,T];L^{2}_{\sigma}(\mathbb{R}^{3}))\cap L^{2}([0,T];\dot{H}_{\sigma}^{1}(\mathbb{R}^{3})),
\end{align}
and
\begin{align}\label{WEA02}
&(w(s_{2}),\varphi(s_{2}))+\int_{s_{1}}^{s_{2}}[(\nabla w,\nabla\varphi)+(w\cdot\nabla w+w\cdot\nabla u^{c,\gamma}+u^{c,\gamma}\cdot\nabla w,\varphi)]\notag\\
&=(w(s_{1}),\varphi(s_{1}))+\int^{s_{2}}_{s_{1}}(w,\partial_{t}\varphi)dt
\end{align}
for any $0\leq s_{1}\leq s_{2}\leq T$ and $\varphi\in C([0,\infty);H_{\sigma}^{1}(\mathbb{R}^{3}))\cap C^{1}([0,\infty);L_{\sigma}^{2}(\mathbb{R}^{3}))$, where $(\cdot,\cdot)$ denotes the standard $L^{2}$-inner product. This solution is global if \eqref{WEA01}--\eqref{WEA02} holds for any $0<T<\infty.$
\end{definition}
\begin{remark}
Here we remark that $C_{w}([0,T];L^{2}_{\sigma}(\mathbb{R}^{3}))$ represents the space whose elements consist of weakly continuous $L^{2}(\mathbb{R}^{3})$-valued functions in $t$, that is, for any $t_{0}\in[0,T]$ and $v\in L^{2}(\mathbb{R}^{3})$,
\begin{align*}
\int_{\mathbb{R}^{3}}w(x,t)v(x)dx\rightarrow\int_{\mathbb{R}^{3}}w(x,t_{0})v(x)dx,\quad\mathrm{as}\;t\rightarrow t_{0}.
\end{align*}
\end{remark}

Observe that for $0\leq t_{1}<t_{2}\leq\infty$ and $g\in L_{t}^{2}H_{x}^{1}$, it follows from H\"{o}lder's inequality, Corollary \ref{MZLEM001} and Lemma \ref{CORO001} that
\begin{align}\label{AYU090}
&\left|\int_{t_{1}}^{t_{2}}\int_{\mathbb{R}^{3}}u^{c,\gamma}\cdot(g\cdot\nabla)gdxdt\right|\notag\\
&\leq\int_{t_{1}}^{t_{2}}\big\||x'|^{\frac{1}{2}}|x|^{\frac{1}{2}}u^{c,\gamma}\big\|_{L^{\infty}(\Omega_{e^{-1}})}\big\||x'|^{-\frac{1}{2}}|x|^{-\frac{1}{2}}g\big\|_{L^{2}}\|\nabla g\|_{L^{2}}dt\notag\\
&\quad+\int_{t_{1}}^{t_{2}}\big\||x|u^{c,\gamma}\big\|_{L^{\infty}(\mathbb{R}^{3}\setminus\Omega_{e^{-1}})}\big\||x|^{-1}g\big\|_{L^{2}}\|\nabla g\|_{L^{2}}dt\notag\\
&\leq CK(c,\gamma)\|\nabla g\|_{L_{t}^{2}L_{x}^{2}}^{2},
\end{align}
where $\Omega_{e^{-1}}$ is a cone given by \eqref{CONE999}. Making use of \eqref{AYU090} and following the proofs of Theorem 1.4 in \cite{LZZ2023} and Theorem 9.1 in \cite{T2018} with a slight modification, we obtain a weak-strong uniqueness theorem for problem \eqref{NS005} as follows.
\begin{theorem}\label{THM09}
Choose a small constant $\delta_{0}>0$ such that if $(c,\gamma)\in M\cap\{|(c,\gamma)|\leq\delta_{0}\}$, there holds $K(c,\gamma)<\frac{1}{4}$. For $w_{0}\in L_{\sigma}^{2}(\mathbb{R}^{3})$ and $0<T\leq\infty$, let $u,v$ be $L^{2}$ weak solutions of problem \eqref{NS005} in $\mathbb{R}^{3}\times(0,T)$ satisfying $u(x,0)=v(x,0)=w_{0}$. Assume that $u\in L^{s}([0,T);L^{q}(\mathbb{R}^{3}))$, $\frac{3}{q}+\frac{2}{s}=1$, $q,s\in[2,\infty]$. Then $u\equiv v.$

\end{theorem}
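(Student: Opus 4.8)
The plan is to run the classical weak--strong (Prodi--Serrin--Ladyzhenskaya) uniqueness argument, as in Theorem~9.1 of \cite{T2018} and Theorem~1.4 of \cite{LZZ2023}, the only genuinely new feature being the two drift terms produced by the singular solution $u^{c,\gamma}$, which will be absorbed by means of Corollary~\ref{MZLEM001}, Lemma~\ref{CORO001} and the estimate \eqref{AYU090}. Set $w=u-v$ and $\pi=\pi_{u}-\pi_{v}$. Subtracting the weak formulations \eqref{WEA02} of $u$ and $v$ and using the identity $u\cdot\nabla u-v\cdot\nabla v=w\cdot\nabla u+v\cdot\nabla w$, the pair $(w,\pi)$ is a weak solution of
\begin{align*}
\partial_{t}w-\Delta w+(w\cdot\nabla)u+(v\cdot\nabla)w+(w\cdot\nabla)u^{c,\gamma}+(u^{c,\gamma}\cdot\nabla)w+\nabla\pi=0,\qquad \mathrm{div}\,w=0,\qquad w(\cdot,0)=0.
\end{align*}

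The first task is to turn this into a rigorous differential inequality for $\|w\|_{L^{2}}^{2}$. Since $u$ and $v$ only lie in $C_{w}([0,T];L^{2}_{\sigma}(\mathbb{R}^{3}))\cap L^{2}([0,T];\dot{H}^{1}_{\sigma}(\mathbb{R}^{3}))$, neither is directly admissible as a test function in \eqref{WEA02}; following \cite{T2018,LZZ2023} I would mollify $u$ and $v$ in time, use each regularized solution as a test function in the other's weak formulation, combine with the energy inequality for $v$ and the energy equality for $u$ (valid because $u$ belongs to the Serrin class $L^{s}L^{q}$), and then let the mollification parameter tend to $0$. In this passage to the limit the nonlinear cross terms are controlled exactly as in the references using $u\in L^{s}([0,T);L^{q})$ with $\tfrac{3}{q}+\tfrac{2}{s}=1$, while the terms containing $u^{c,\gamma}$ stay bounded thanks to \eqref{AYU090}, Corollary~\ref{MZLEM001} and Lemma~\ref{CORO001}. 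The outcome is
\begin{align*}
\|w(t)\|_{L^{2}}^{2}+2\int_{0}^{t}\|\nabla w\|_{L^{2}}^{2}\,ds\le C\int_{0}^{t}\!\Big(\Big|\int_{\mathbb{R}^{3}}(w\cdot\nabla w)\cdot u\,dx\Big|+\Big|\int_{\mathbb{R}^{3}}(w\cdot\nabla u^{c,\gamma})\cdot w\,dx\Big|+\Big|\int_{\mathbb{R}^{3}}(u^{c,\gamma}\cdot\nabla w)\cdot w\,dx\Big|\Big)ds,
\end{align*}
where $\int(v\cdot\nabla w)\cdot w\,dx=0$ since $\mathrm{div}\,v=0$, and $\int(w\cdot\nabla u)\cdot w\,dx=-\int(w\cdot\nabla w)\cdot u\,dx$ since $\mathrm{div}\,w=0$.

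It remains to close the estimate. For the $u^{c,\gamma}$-terms, \eqref{Z02} together with Corollary~\ref{MZLEM001} (with $\alpha=\tfrac12$) gives $\big|\int(w\cdot\nabla u^{c,\gamma})\cdot w\,dx\big|\le\||x'||x|\nabla u^{c,\gamma}\|_{L^{\infty}}\,\big\||x'|^{-1/2}|x|^{-1/2}w\big\|_{L^{2}}^{2}\le CK(c,\gamma)\|\nabla w\|_{L^{2}}^{2}$, and splitting the domain of integration into $\Omega_{e^{-1}}$ and its complement exactly as in the proof of \eqref{AYU090} gives $\big|\int(u^{c,\gamma}\cdot\nabla w)\cdot w\,dx\big|\le CK(c,\gamma)\|\nabla w\|_{L^{2}}^{2}$; choosing $\delta_{0}$ so small that $K(c,\gamma)<\tfrac14$ (as in the statement) lets both be absorbed into the dissipation. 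For the trilinear term, H\"older's inequality, the Gagliardo--Nirenberg inequality $\|w\|_{L^{2q/(q-2)}}\le C\|w\|_{L^{2}}^{1-3/q}\|\nabla w\|_{L^{2}}^{3/q}$ (valid for $3\le q\le\infty$), the Sobolev embedding and Young's inequality yield, when $3<q\le\infty$,
\begin{align*}
\Big|\int_{\mathbb{R}^{3}}(w\cdot\nabla w)\cdot u\,dx\Big|\le C\|u\|_{L^{q}}\|w\|_{L^{2}}^{1-3/q}\|\nabla w\|_{L^{2}}^{1+3/q}\le\tfrac14\|\nabla w\|_{L^{2}}^{2}+C\|u\|_{L^{q}}^{s}\|w\|_{L^{2}}^{2},
\end{align*}
since the exponent conjugate to $\tfrac{2q}{q+3}$ is precisely $s=\tfrac{2q}{q-3}$. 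Absorbing all gradient contributions leaves $\tfrac{d}{dt}\|w(t)\|_{L^{2}}^{2}\le C\|u(t)\|_{L^{q}}^{s}\|w(t)\|_{L^{2}}^{2}$ with $\|u\|_{L^{q}}^{s}\in L^{1}([0,T))$; as $w(0)=0$, Gronwall's inequality forces $w\equiv0$, i.e. $u\equiv v$. In the endpoint case $q=3$, $s=\infty$, Young's inequality no longer produces an absorbable power of $\|\nabla w\|_{L^{2}}$, and I would instead work on a fine partition of $[0,T)$ on whose subintervals $\|u\|_{L^{3}}$ contributes only a small factor, the device already used in Step~2 of the proof of Theorem~\ref{THM003} and in \cite{LZZ2023}.

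I expect the delicate point to be not the final Gronwall step but the rigorous derivation of the energy inequality for $w$: the time mollification must be arranged so that the singular products $(u^{c,\gamma}\cdot\nabla)w$ and $(w\cdot\nabla)u^{c,\gamma}$ remain controlled in the limit --- which is exactly what the anisotropic Hardy-type bounds of Corollary~\ref{MZLEM001}, Lemma~\ref{CORO001} and estimate \eqref{AYU090} are meant to supply --- and one must verify that the Serrin integrability of $u$ survives the regularization so that the nonlinear cross terms converge to the expected values.
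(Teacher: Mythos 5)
Your proposal follows the same route the paper itself takes: the paper gives no written proof of Theorem \ref{THM09} beyond establishing the trilinear bound \eqref{AYU090} and appealing to Theorem 9.1 of \cite{T2018} and Theorem 1.4 of \cite{LZZ2023} ``with a slight modification,'' and your write-up is precisely that modification carried out --- the standard Prodi--Serrin energy argument for $w=u-v$, with the two drift terms absorbed via Lemma \ref{CORO001} and Corollary \ref{MZLEM001} exactly as in \eqref{AYU090}. For $3<q\le\infty$ your estimates are correct: the Young exponents check out (the conjugate of $\frac{2q}{q+3}$ is indeed $s=\frac{2q}{q-3}$), and absorbing the $u^{c,\gamma}$-terms into the dissipation under $K(c,\gamma)<\frac14$ is the intended use of the smallness of $|(c,\gamma)|$.

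Two caveats. First, your device for the endpoint $(q,s)=(3,\infty)$ does not work as stated: after H\"older and Sobolev the term to be absorbed is $C\|u(t)\|_{L^{3}}\|\nabla w\|_{L^{2}}^{2}$, so the quantity that must be made small on a subinterval is $\sup_{t}\|u(t)\|_{L^{3}}$, and unlike $\int\|u\|_{L^{q}}^{s}\,dt$ for $s<\infty$ this does not shrink when the interval shrinks. The endpoint requires either smallness of $\|u\|_{L^{\infty}_{t}L^{3}_{x}}$ or strong continuity $u\in C([0,T];L^{3})$, which permits the splitting of $u$ into a smooth part plus a part small in $L^{\infty}_{t}L^{3}_{x}$; the latter is what actually holds for the mild solutions to which the theorem is applied in Section \ref{SEC005}, so the application is unaffected, but your stated fix should be replaced by this decomposition. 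Second, your derivation of the energy inequality for $w$ invokes the energy inequality for the weak solution $v$, which is not part of Definition \ref{DEFINI06}; this hypothesis is implicit in the references the paper cites and must be regarded as part of the standing assumptions, so you should state it explicitly rather than treat it as a consequence of the definition.
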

\begin{remark}
The weak-strong uniqueness theorem shows that only if we find a strong solution, each of other weak solutions satisfying the same initial data must be the same to it in the whole spatiotemporal domain.
\end{remark}

The first main result of this section is that a $L^{2}$ weak solution of problem \eqref{NS005} with the initial data belonging to $L_{\sigma}^{2}(\mathbb{R}^{3})$ will become a $L^{p}$ mild solution after some time.
\begin{theorem}\label{LEM05}
Let $(c,\gamma)\in M\cap\{|(c,\gamma)|\leq\delta_{0}\}$ with $\delta_{0}$ given in Theorem \ref{THM09}. Assume that $w$ is a $L^{2}$ weak solution for problem \eqref{NS005} in $\mathbb{R}^{3}\times(0,\infty)$ with the initial data $w_{0}\in L_{\sigma}^{2}(\mathbb{R}^{3}).$ Then for any $3\leq p\leq\frac{9}{2}$, there exists some $t_{0}>0$ such that $w(\cdot+t_{0})$ is a $L^{p}$ mild solution to equations \eqref{NS005} with the initial data $w(t_{0})\in L_{\sigma}^{p}\cap L_{\sigma}^{2}(\mathbb{R}^{3}).$
\end{theorem}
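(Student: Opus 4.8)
The plan is to locate a time $t_{0}>0$ at which the slice $w(t_{0})$ lies in $L_{\sigma}^{p}\cap L_{\sigma}^{2}(\mathbb{R}^{3})$ and is small in $L_{\sigma}^{3}(\mathbb{R}^{3})$, to build the global $L^{p}$ mild solution issued from $w(t_{0})$ via Theorem \ref{THM002}, and then to identify it with $w(\cdot+t_{0})$ by means of the weak-strong uniqueness Theorem \ref{THM09}.

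First I would exploit $w\in L^{2}([0,\infty);\dot{H}_{\sigma}^{1}(\mathbb{R}^{3}))$: since $t\mapsto\|\nabla w(t)\|_{L^{2}}$ is square-integrable on $[0,\infty)$, there is a sequence $t_{k}\to\infty$ with $\|\nabla w(t_{k})\|_{L^{2}}\to0$, and for a.e.\ $t$ one has $w(t)\in L_{\sigma}^{2}\cap L_{\sigma}^{6}(\mathbb{R}^{3})$ by the Sobolev embedding $\dot{H}^{1}(\mathbb{R}^{3})\hookrightarrow L^{6}(\mathbb{R}^{3})$. To control $\|w(t_{k})\|_{L^{3}}$ I would first record the energy estimate $\|w(t)\|_{L^{2}}\le\|w_{0}\|_{L^{2}}$, which holds for $L^{2}$ weak solutions of \eqref{NS005} by the standard energy argument once the drift contribution $\int(w\cdot\nabla u^{c,\gamma})\cdot w=-\int u^{c,\gamma}\cdot(w\cdot\nabla)w$ has been absorbed via \eqref{AYU090} and $K(c,\gamma)<\frac{1}{4}$; then the Gagliardo-Nirenberg inequality gives $\|w(t_{k})\|_{L^{3}}\le C\|w_{0}\|_{L^{2}}^{1/2}\|\nabla w(t_{k})\|_{L^{2}}^{1/2}$, which is $<\varepsilon_{0}$ (the constant of Theorem \ref{THM002}) for $k$ large. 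I then fix such a $t_{0}:=t_{k}$, so that $w(t_{0})\in L_{\sigma}^{2}\cap L_{\sigma}^{6}(\mathbb{R}^{3})\subset L_{\sigma}^{p}\cap L_{\sigma}^{3}\cap L_{\sigma}^{2}(\mathbb{R}^{3})$ for every $p\in[3,\frac{9}{2}]$.

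Next, Theorem \ref{THM002} (with $\delta_{0}$ decreased if necessary so that its smallness hypotheses are met) produces a unique global $L^{p}$ mild solution $\widetilde{w}$ of \eqref{NS005} with initial data $w(t_{0})$; by construction it coincides with the global $L^{3}$ mild solution (cf.\ the remark following Theorem \ref{THM002}), hence $\widetilde{w}\in C([0,\infty);L_{\sigma}^{3}(\mathbb{R}^{3}))\cap L_{t}^{4}L_{x}^{6}$, and since $\frac{3}{6}+\frac{2}{4}=1$ it lies in the Serrin class $L^{4}([0,\infty);L^{6}(\mathbb{R}^{3}))$. I would then check that $\widetilde{w}$ is in fact an $L^{2}$ weak solution of \eqref{NS005} in the sense of Definition \ref{DEFINI06}: starting from $w(t_{0})\in L_{\sigma}^{2}(\mathbb{R}^{3})$, an $L^{2}$ energy estimate for \eqref{NS005} --- in which the singular drift is bounded by $K(c,\gamma)\|\nabla\widetilde{w}\|_{L^{2}}^{2}$ through \eqref{AYU090} and absorbed using $K(c,\gamma)<\frac{1}{4}$, while the quadratic term $\widetilde{w}\cdot\nabla\widetilde{w}$ is handled with the space-time bounds of Theorem \ref{THM002} --- yields $\widetilde{w}\in C_{w}([0,T];L_{\sigma}^{2}(\mathbb{R}^{3}))\cap L^{2}([0,T];\dot{H}_{\sigma}^{1}(\mathbb{R}^{3}))$ together with the weak identity \eqref{WEA02}, for every $T>0$.

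Finally, because system \eqref{NS005} is autonomous, the time-translate $w(\cdot+t_{0})$ is again an $L^{2}$ weak solution of \eqref{NS005} on $\mathbb{R}^{3}\times(0,\infty)$ with initial data $w(t_{0})\in L_{\sigma}^{2}(\mathbb{R}^{3})$. Applying Theorem \ref{THM09} with $(q,s)=(6,4)$ to the pair $w(\cdot+t_{0})$ and $\widetilde{w}$ --- two $L^{2}$ weak solutions with the common initial datum $w(t_{0})$, the second belonging to $L_{t}^{4}L_{x}^{6}$ --- gives $w(\cdot+t_{0})\equiv\widetilde{w}$; hence $w(\cdot+t_{0})$ is an $L^{p}$ mild solution of \eqref{NS005} with initial data $w(t_{0})\in L_{\sigma}^{p}\cap L_{\sigma}^{2}(\mathbb{R}^{3})$, for every $p\in[3,\frac{9}{2}]$. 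I expect the principal obstacle to be the third step, namely verifying that the mild solution $\widetilde{w}$ genuinely satisfies Definition \ref{DEFINI06} --- propagating the $L_{\sigma}^{2}\cap\dot{H}_{\sigma}^{1}$ regularity through the mild-solution construction and establishing \eqref{WEA02} --- since this is precisely what authorizes the use of Theorem \ref{THM09}; the auxiliary bound $\|w(t)\|_{L^{2}}\le\|w_{0}\|_{L^{2}}$ invoked in the first step is routine but rests on the same absorption of the drift through \eqref{AYU090}.
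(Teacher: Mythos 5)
Your proposal is correct and follows essentially the same route as the paper: use the energy inequality and the square-integrability of $\|\nabla w(t)\|_{L^{2}}$ to locate a time $t_{0}$ at which $w(t_{0})\in L_{\sigma}^{2}\cap L_{\sigma}^{6}$ with $\|w(t_{0})\|_{L^{3}}<\varepsilon_{0}$, then combine Theorem \ref{THM002} with the weak-strong uniqueness Theorem \ref{THM09}. The only cosmetic differences are that the paper selects $t_{0}$ via the space-time bound $\|w\|_{L_{t}^{2/\alpha}L_{x}^{p}}\leq C\|w_{0}\|_{L^{2}}$ and a contradiction argument rather than a sequence with vanishing gradient, and the step you flag as the principal obstacle (that the mild solution is genuinely an $L^{2}$ weak solution, so that Theorem \ref{THM09} applies) is exactly what the paper's Theorem \ref{DWU08} supplies.
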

\begin{proof}
Making use of Theorems 1.1 and 1.2 in \cite{LY2021}, we know that for any $0\leq s\leq t$,
\begin{align*}
\|w(t)\|_{L^{2}}^{2}+\int^{t}_{s}\|\nabla\otimes w(\tau)\|_{L^{2}}^{2}d\tau\leq\|w(s)\|_{L^{2}}^{2},\quad\lim\limits_{t\rightarrow\infty}\|w(t)\|_{L^{2}}=0,
\end{align*}
which implies that
\begin{align}\label{QKU001}
\|w(t)\|_{L^{2}}\leq\|w(s)\|_{L^{2}},\quad\|\nabla w\|_{L_{t}^{2}L_{x}^{2}}\leq\|w_{0}\|_{L^{2}}.
\end{align}
By the interpolation inequality, we obtain that for $2<p\leq6,$ $0<\alpha\leq1,$
\begin{align}\label{AZTU001}
\|w\|_{L^{p}}\leq C\|\nabla w\|_{L^{2}}^{\alpha}\|w\|_{L^{2}}^{1-\alpha},\quad\mathrm{with}\;\frac{1}{p}=\frac{1}{2}-\frac{\alpha}{3}.
\end{align}
Integrating \eqref{AZTU001} from $0$ to $\infty$, we have from \eqref{QKU001} that
\begin{align}\label{QKU002}
\|w\|_{L_{t}^{\frac{2}{\alpha}}L_{x}^{p}}\leq &C\|\nabla w\|_{L_{t}^{2}L_{x}^{2}}^{\alpha}\|w_{0}\|_{L^{2}}^{1-\alpha}\leq C\|w_{0}\|_{L^{2}}.
\end{align}
Claim that for any given $\varepsilon>0$, there exists some $t^{\ast}\geq0$ such that for any $2< p\leq6$, there holds $\|w(t^{\ast})\|_{L^{p}}\leq\varepsilon$. If not, there will exist some $\bar{\varepsilon}>0$ and $2< p_{0}\leq6$ such that for all $t\geq0$, $\|w(t)\|_{L^{p_{0}}}\geq\bar{\varepsilon}$. This is a contradiction to \eqref{QKU002}. Therefore, by using \eqref{QKU001} and picking $\varepsilon=\varepsilon_{0}$ with $\varepsilon_{0}$ given by Theorem \ref{THM002}, we find a point $t_{0}\geq0$ such that for $3\leq p\leq\frac{9}{2}$, $w(t_{0})\in L_{\sigma}^{p}\cap L_{\sigma}^{2}\cap L_{\sigma}^{3}(\mathbb{R}^{3})$ and $\|w(t_{0})\|_{L^{3}(\mathbb{R}^{3})}\leq \varepsilon_{0}$. Then Theorem \ref{LEM05} is proved by combining Theorems \ref{THM002} and \ref{THM09}.

\end{proof}

The second main result is that a $L^{p}$ mild solution for problem \eqref{NS005} with the initial data in the class of $L_{\sigma}^{p}\cap L_{\sigma}^{2}(\mathbb{R}^{3})$ also belongs to $L^{2}$ weak solution.
\begin{theorem}\label{DWU08}
For $p\geq3$, let $w$ the unique $L^{p}$ mild solution of equations \eqref{NS005} obtained in Theorem \ref{THM001} with the initial data $w_{0}\in L_{\sigma}^{p}\cap L_{\sigma}^{2}(\mathbb{R}^{3})$ on $[0,T]$ for some $T>0$. Then $w$ is a $L^{2}$ weak solution for problem \eqref{NS005} on $[0,T]$.
\end{theorem}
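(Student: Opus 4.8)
The plan is to upgrade the $L^{p}$‑regularity of $w$ to the $L^{2}$‑based regularity \eqref{WEA01} by an energy estimate, and then to extract the weak formulation \eqref{WEA02} directly from the integral identity \eqref{AQ002}. First I would split $w=a+z$ as in \eqref{INT001}, with $a:=e^{-t\mathcal{L}}w_{0}$ and $z:=N(w,w)$. Since $w_{0}\in L^{2}_{\sigma}(\mathbb{R}^{3})$, Lemma \ref{Lem01} applied with $p=2$ and $\alpha=1$ gives $a\in C([0,T];L^{2}_{\sigma}(\mathbb{R}^{3}))\cap L^{2}([0,T];\dot{H}^{1}_{\sigma}(\mathbb{R}^{3}))$ together with $\|a\|_{C_{T}L^{2}_{x}}+\|\nabla a\|_{L^{2}_{T}L^{2}_{x}}\le C\|w_{0}\|_{L^{2}}$, so $a$ already has the regularity demanded by \eqref{WEA01}.

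It then remains to show $z\in C([0,T];L^{2}_{\sigma}(\mathbb{R}^{3}))\cap L^{2}([0,T];\dot{H}^{1}_{\sigma}(\mathbb{R}^{3}))$, which I would obtain from a standard $L^{2}$ energy estimate for \eqref{NONLINE002} with $w_{1}=w_{2}=w$. One may first reduce to $p=3$: for $p\in(3,\tfrac{9}{2}]$ one has $w_{0}\in L^{2}_{\sigma}\cap L^{p}_{\sigma}\subset L^{3}_{\sigma}$, and interpolating the mild bounds $w\in C_{T}L^{p}_{x}\cap L^{4p/3}_{T}L^{2p}_{x}$ together with $a\in C_{T}L^{3}_{x}$ (and Lemma \ref{Lem02}) places $w$ in $C_{T}L^{3}_{x}\cap L^{4}_{T}L^{6}_{x}$, so $w$ coincides with the $L^{3}$ mild solution by the uniqueness part of Theorem \ref{THM001}. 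For $p=3$, after the mollification/approximation argument used in the proof of Theorem \ref{THM003} I would multiply \eqref{NONLINE002} by $z$ and integrate over $\mathbb{R}^{3}$: the pressure term and $\int_{\mathbb{R}^{3}}(u^{c,\gamma}\cdot\nabla)z\cdot z\,dx$ vanish by $\mathrm{div}\,z=\mathrm{div}\,u^{c,\gamma}=0$; the term $\int_{\mathbb{R}^{3}}(z\cdot\nabla)u^{c,\gamma}\cdot z\,dx$ is controlled by $C\,K(c,\gamma)\|\nabla z\|_{L^{2}}^{2}$ exactly as in \eqref{ZZW003} and \eqref{AYU090}, using \eqref{Z02} and Corollary \ref{MZLEM001} with $\alpha=\tfrac12$; and the source term obeys $\big|\int_{\mathbb{R}^{3}}(w\otimes w):\nabla z\,dx\big|\le\tfrac14\|\nabla z\|_{L^{2}}^{2}+\|w\|_{L^{4}}^{4}$. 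Since $K(c,\gamma)$ is small for $|(c,\gamma)|\le\delta_{0}$, these combine into
\begin{align*}
\frac{d}{dt}\|z(t)\|_{L^{2}}^{2}+\|\nabla z(t)\|_{L^{2}}^{2}\le C\|w(t)\|_{L^{4}}^{4},
\end{align*}
whose right-hand side is integrable on $[0,T]$ because interpolating $w\in C_{T}L^{3}_{x}\cap L^{4}_{T}L^{6}_{x}$ yields $w\in L^{4}([0,T];L^{4}(\mathbb{R}^{3}))$. As $z(\cdot,0)=0$, integrating in time gives $z\in L^{\infty}_{T}L^{2}_{x}\cap L^{2}_{T}\dot{H}^{1}_{x}$, and the resulting energy identity together with weak continuity from the equation promotes this to $z\in C([0,T];L^{2}_{\sigma})$; hence $w=a+z$ satisfies \eqref{WEA01}, and in particular $w\in C([0,T];L^{2}_{\sigma})\subset C_{w}([0,T];L^{2}_{\sigma})$.

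With \eqref{WEA01} available, I would deduce \eqref{WEA02} from \eqref{AQ002}. Recalling \eqref{OPE001}, the identity \eqref{AQ002} says precisely that $\partial_{t}w=\Delta w-\mathbb{P}((w\cdot\nabla)u^{c,\gamma})-\mathbb{P}((u^{c,\gamma}\cdot\nabla)w)-\mathbb{P}((w\cdot\nabla)w)$ in the sense of distributions; by the regularity just obtained, together with \eqref{AYU090} for the $u^{c,\gamma}$‑terms and $w\otimes w\in L^{2}_{T}L^{2}_{x}$ for the nonlinearity, the right-hand side lies in $L^{1}([0,T];H^{-1}(\mathbb{R}^{3}))$, so $t\mapsto(w(t),\varphi(t))$ is absolutely continuous for every admissible $\varphi$. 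Using $\mathbb{P}\varphi=\varphi$ for divergence-free $\varphi$, differentiating, and integrating from $s_{1}$ to $s_{2}$—with $w\in C([0,T];L^{2}_{\sigma})$ making sense of the boundary terms—then yields \eqref{WEA02}.

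I expect the main obstacle to be the energy estimate for $z$: one must verify that the a priori space-time integrability dictated by Definition \ref{DEF69} is just sharp enough to place $w\otimes w$ in $L^{2}([0,T];L^{2}(\mathbb{R}^{3}))$ (so the source term closes), while simultaneously absorbing the genuinely singular contribution of $u^{c,\gamma}$ into $\|\nabla z\|_{L^{2}}^{2}$ via the anisotropic Caffarelli--Kohn--Nirenberg estimates of Corollary \ref{MZLEM001} and Lemma \ref{CORO001} with $K(c,\gamma)$ small; the justification of the formal integrations by parts (mollification in space, cut-off near the singular ray $\{x'=0'\}$) is routine but should be carried out as in the proof of Theorem \ref{THM003}.
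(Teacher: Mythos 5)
Your overall strategy coincides with the paper's: decompose $w=a+z$ via \eqref{INT001}, get $a\in C_TL_x^2\cap L_T^2\dot H_x^1$ from Lemma \ref{Lem01} with $p=2$, run an $L^2$ energy estimate on \eqref{NONLINE002} in which the $u^{c,\gamma}$-terms are absorbed into $\|\nabla z\|_{L^2}^2$ using Corollary \ref{MZLEM001} and Lemma \ref{CORO001} with $K(c,\gamma)$ small, and reduce everything to showing $w\in L_T^4L_x^4$. Up to that point your argument is sound and matches the paper step for step.

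The genuine gap is in how you obtain $w\in L_T^4L_x^4$. You only treat $p\in[3,\tfrac92]$, by interpolating the mild bounds $w\in C_TL_x^p\cap L_T^{4p/3}L_x^{2p}$ down to $C_TL_x^3\cap L_T^4L_x^6$ and identifying $w$ with the $L^3$ mild solution; but Theorem \ref{DWU08} is stated for \emph{all} $p\geq3$. For $p>6$ this route is unavailable in principle: every spatial Lebesgue exponent accessible by interpolating between $L^p$ and $L^{2p}$ is at least $p>6$, so neither $L^6$ nor $L^4$ in space can be reached from the a priori bounds of Definition \ref{DEF69} alone, and the only low-exponent information you have is on $a$ (from $w_0\in L^2_\sigma$), not on $z$. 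The paper closes exactly this case with a bootstrap: it applies Lemma \ref{Lem02} with the exponent $p$ replaced by $p/2$ (using $w\otimes w\in L_T^{2}L_x^{p/2}$, which does follow from the mild bounds) to gain $z\in C_TL_x^{p/2}\cap L_T^{2p/3}L_x^{p}$, combines this with $a\in C_TL_x^{p/2}$ obtained by interpolating $a\in C_TL_x^2\cap C_TL_x^p$, and iterates until the exponent falls into $(4,8]$, at which point $w\in C_TL_x^4\subset L_T^4L_x^4$ on the finite interval $[0,T]$. Your proposal needs this (or an equivalent descent mechanism) for $p>\tfrac92$; as written, the source term $\|w\|_{L^4}^4$ in your energy inequality is not known to be integrable in that range, and the proof does not close. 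The remaining minor points (strong versus weak $L^2$-continuity of $z$, and deducing \eqref{WEA02} from \eqref{AQ002} rather than citing the Fabes--Jones--Rivi\`ere equivalence) are harmless.
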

\begin{remark}
For $p\in[3,\frac{9}{2}]$, let $w$ be the global $L^{p}$ mild solution of equations \eqref{NS005} obtained in Theorem \ref{THM002} under the initial data $w_{0}\in L^{p}_{\sigma}(\mathbb{R}^{3})\cap L^{3}_{\sigma}(\mathbb{R}^{3})\cap L_{\sigma}^{2}(\mathbb{R}^{3})\cap \{\|w_{0}\|_{L^{3}(\mathbb{R}^{3})}<\varepsilon_{0}\}$ for some small positive constant $\varepsilon_{0}$. Then applying the proof of Theorem \ref{DWU08} with a slight modification, we obtain that $w$ is a global $L^{2}$ weak solution.
\end{remark}
\begin{remark}
It is worth remarking that the key to the proof of Theorem \ref{DWU08} lies in applying a bootstrap argument based on the estimates established in Lemmas \ref{Lem01} and \ref{Lem02}.
\end{remark}

\begin{proof}
By Fabes-Jones-Rivi\`{e}re \cite{FJR1972}, we see that $w$ is also a very weak solution in $C([0,T];L_{\sigma}^{p}(\mathbb{R}^{3})).$ Then in order to prove that this solution belongs to $L^{2}$ weak solution, it suffices to demonstrate that $w\in C_{w}([0,T];L^{2}(\mathbb{R}^{3}))\cap L_{T}^{2}\dot{H}_{x}^{1}.$ Let $w=a+z$ as decomposed in \eqref{INT001}. Since $w_{0}\in L_{\sigma}^{p}\cap L_{\sigma}^{2}(\mathbb{R}^{3})$, $p\geq3$, it follows from Lemma \ref{Lem01} that $a\in C_{T}L_{x}^{p}\cap L_{T}^{\frac{4p}{3}}L_{x}^{2p}\cap C_{T}L_{x}^{2}\cap L_{T}^{2}\dot{H}_{x}^{1}$ satisfies
\begin{align}\label{DFM06}
\begin{cases}
\|a\|_{C_{T}L_{x}^{2}}+\|\nabla a\|_{L_{T}^{2}L_{x}^{2}}\leq C(c,\gamma)\|w_{0}\|_{L^{2}},\\
\|a\|_{C_{T}L_{x}^{p}\cap L_{T}^{\frac{4p}{3}}L_{x}^{2p}}\leq C(p,c,\gamma)\|w_{0}\|_{L^{p}}.
\end{cases}
\end{align}
Then in the following we only need to show that $z\in C_{w}([0,T];L^{2}(\mathbb{R}^{3}))\cap L_{T}^{2}\dot{H}_{x}^{1}.$

Multiplying equations \eqref{NONLINE002} with $(w_{1},w_{2})=(w,w)$ by $z$, we have from integration by parts that
\begin{align}\label{FJMQ001}
&\frac{1}{2}\frac{d}{dt}\|z(t)\|_{L^{2}}^{2}+\|\nabla z\|_{L^{2}}^{2}\notag\\
&=\int_{\mathbb{R}^{3}}(z\otimes u^{c,\gamma}+u^{c,\gamma}\otimes z)\cdot\nabla zdx+\int_{\mathbb{R}^{3}}(w\otimes w)\cdot\nabla zdx.
\end{align}
In exactly the same way to \eqref{ZZW003}, we deduce
\begin{align*}
\int_{\mathbb{R}^{3}}(z\otimes u^{c,\gamma}+u^{c,\gamma}\otimes z)\cdot\nabla zdx\leq C_{0}K(c,\gamma)\|\nabla z\|_{L^{2}}^{2}.
\end{align*}
Making use of H\"{o}lder's inequality and Cauchy inequality, we have
\begin{align*}
\int_{\mathbb{R}^{3}}(w\otimes w)\cdot\nabla zdx\leq\|w\otimes w\|_{L^{2}}\|\nabla z\|_{L^{2}}\leq\frac{1}{2}\|\nabla z\|_{L^{2}}^{2}+\frac{1}{2}\|w\|_{L^{4}}^{4}.
\end{align*}
Substituting these two equations into \eqref{FJMQ001} and integrating it from $0$ to $T$, we obtain
\begin{align*}
\|z\|_{C_{T}L_{x}^{2}}+\|\nabla z\|_{L_{T}^{2}L_{x}^{2}}\leq C(c,\gamma)\|w\|_{L_{T}^{4}L_{x}^{4}}^{2},
\end{align*}
where we used the fact that $C_{0}K(c,\gamma)<\frac{1}{2}$. Then it only needs to demonstrate that $w\in L_{T}^{4}L_{x}^{4}$ for the purpose of proving $z\in C_{w}([0,T];L^{2}(\mathbb{R}^{3}))\cap L_{T}^{2}\dot{H}_{x}^{1}.$ According to Theorem \ref{THM001}, we know that $w\in C([0,T];L_{\sigma}^{p}(\mathbb{R}^{3}))\cap L^{\frac{4p}{3}}([0,T];L_{\sigma}^{2p}(\mathbb{R}^{3}))$ satisfies
\begin{align*}
\|w\|_{C_{T}L_{x}^{p}\cap L_{T}^{\frac{4p}{3}}L_{x}^{2p}}+\|\nabla|w|^{\frac{p}{2}}\|^{\frac{2}{p}}_{L^{2}_{T}L^{2}_{x}}\leq C(p,c,\gamma)\|w_{0}\|_{L^{p}(\mathbb{R}^{3})}.
\end{align*}
Then if $p=4$, then we directly have $w\in L_{T}^{4}L_{x}^{4}$. If $p\neq4$, we divide into three cases to discuss as follows.

{\bf Case 1.} When $3\leq p<4$, we have from the interpolation inequality that
\begin{align*}
\|w\|_{L_{T}^{\frac{8p}{3(4-p)}}L_{x}^{4}}\leq\|w\|_{L_{T}^{\infty}L_{x}^{p}}^{\frac{p}{2}-1}\|w\|_{L_{T}^{\frac{4p}{3}}L_{x}^{2p}}^{2-\frac{p}{2}}\leq C\|w_{0}\|_{L^{p}}.
\end{align*}
This, in combination with the fact that $\frac{8p}{3(4-p)}>4$, reads that $w\in L_{T}^{4}L_{x}^{4}$.

{\bf Case 2.} When $4<p\leq8$, using the interpolation inequality again, we obtain
\begin{align}\label{TFE9}
\|w\|_{L_{T}^{\frac{16p}{3(8-p)}}L_{x}^{8}}\leq\|w\|_{L_{T}^{\infty}L_{x}^{p}}^{\frac{p}{4}-1}\|w\|_{L_{T}^{\frac{4p}{3}}L_{x}^{2p}}^{2-\frac{p}{4}}\leq C\|w_{0}\|_{L^{p}}.
\end{align}
In view of $\frac{16p}{3(8-p)}>\frac{16}{3}>\frac{16}{5}$ and utilizing Lemma \ref{Lem02} with $(w_{1},w_{2})=(w,w)$, we obtain from \eqref{TFE9} and H\"{o}lder's inequality that
\begin{align}\label{QDFU09}
\|z\|_{C_{T}L_{x}^{4}\cap L_{T}^{\frac{16}{3}}L_{x}^{8}}\leq& C(c,\gamma)
\|w\|_{L_{T}^{\frac{16}{5}}L_{x}^{8}}\|w\|_{L_{T}^{\frac{16}{3}}L_{x}^{8}}\leq C(c,\gamma,T)\|w_{0}\|_{L^{p}}^{2}.
\end{align}
Since $a\in C_{T}L_{x}^{p}\cap C_{T}L_{x}^{2}$, it then follows from the interpolation inequality that $a\in C_{T}L_{x}^{4}$. This, together with \eqref{QDFU09}, yields that $w\in L_{T}^{4}L_{x}^{4}$.

{\bf Case 3.} Consider the case when $p>8$. Using \eqref{AQ005} and applying Lemma \ref{Lem02} with $p$ replaced by $\frac{p}{2}$, we deduce
\begin{align*}
\|z\|_{C_{T}L_{x}^{\frac{p}{2}}\cap L_{T}^{\frac{2p}{3}}L_{x}^{p}}\leq C(p,c,\gamma)\|w\|_{L_{T}^{\frac{2p}{p-3}}L_{x}^{p}}\|w\|_{L_{T}^{\frac{2p}{3}}L_{x}^{p}}\leq C(p,c,\gamma)\|w_{0}\|_{L^{p}}^{2}.
\end{align*}
With regard to $a$, it follows from \eqref{DFM06} and the interpolation inequality that
\begin{align*}
\|a\|_{C_{T}L_{x}^{\frac{p}{2}}}\leq\|a\|_{C_{T}L_{x}^{p}}^{\frac{p-4}{p-2}}\|a\|_{C_{T}L_{x}^{2}}^{\frac{2}{p-2}}\leq C(p,c,\gamma)\|w_{0}\|_{L^{p}}^{\frac{p-4}{p-2}}\|w_{0}\|_{L^{2}}^{\frac{2}{p-2}},
\end{align*}
and
\begin{align*}
\|a\|_{L_{T}^{\frac{2p}{3}}L_{x}^{p}}\leq&\|a\|_{L_{T}^{\infty}L_{x}^{2}}^{\frac{1}{p-1}}\|a\|_{L_{T}^{\frac{2p(p-2)}{3(p-1)}}L_{x}^{2p}}^{\frac{p-2}{p-1}}\leq T^{\frac{3}{4(p-1)}}\|a\|_{L_{T}^{\infty}L_{x}^{2}}^{\frac{1}{p-1}}\|a\|_{L_{T}^{\frac{4p}{3}}L_{x}^{2p}}^{\frac{p-2}{p-1}}\notag\\
\leq&C(p,c,\gamma)T^{\frac{3}{4(p-1)}}\|w_{0}\|_{L^{2}}^{\frac{1}{p-1}}\|w_{0}\|_{L^{p}}^{\frac{p-2}{p-1}},
\end{align*}
where we also used H\"{o}lder's inequality. Then we have $w\in C_{T}L_{x}^{\frac{p}{2}}\cap L_{T}^{\frac{2p}{3}}L_{x}^{p}.$ Repeating the above bootstrap arguments for finite times, we find an index $i_{0}>0$ such that $w\in C_{T}L_{x}^{\frac{p}{2^{i_{0}}}}\cap L_{T}^{\frac{p2^{2-i_{0}}}{3}}L_{x}^{p2^{1-i_{0}}}$ with $4<\frac{p}{2^{i_{0}}}\leq8$. Then it follows from the arguments in the case of $4<p\leq8$ that $w\in L_{T}^{4}L_{x}^{4}$. The proof is complete.

\end{proof}

\noindent{\bf{\large Acknowledgements.}} X. Zheng was partially supported by the National Natural Science Foundation of China under grant No. 11871087. Z. Zhao was partially supported by CPSF (2021M700358).

%
%

\bibliographystyle{plain}

\def\cprime{$'$}

\end{document}